\pdfoutput=1

\documentclass[a4paper,10pt,reqno]{amsart}
\usepackage{initial-data-kerr-stability}

%
%

\numberwithin{equation}{section}

\pgfdeclaredecoration{complete sines}{initial}
{
  \state{initial}[
  width=+0pt,
  next state=sine,
  persistent precomputation={\pgfmathsetmacro\matchinglength{
      \pgfdecoratedinputsegmentlength / int(\pgfdecoratedinputsegmentlength/\pgfdecorationsegmentlength)}
    \setlength{\pgfdecorationsegmentlength}{\matchinglength pt}
  }] {}
  \state{sine}[width=\pgfdecorationsegmentlength]{
    \pgfpathsine{\pgfpoint{0.25\pgfdecorationsegmentlength}{0.5\pgfdecorationsegmentamplitude}}
    \pgfpathcosine{\pgfpoint{0.25\pgfdecorationsegmentlength}{-0.5\pgfdecorationsegmentamplitude}}
    \pgfpathsine{\pgfpoint{0.25\pgfdecorationsegmentlength}{-0.5\pgfdecorationsegmentamplitude}}
    \pgfpathcosine{\pgfpoint{0.25\pgfdecorationsegmentlength}{0.5\pgfdecorationsegmentamplitude}}
  }
  \state{final}{}
}

\makeatletter
\tikzset{
  hatch distance/.store in=\hatchdistance,
  hatch distance=5pt,
  hatch thickness/.store in=\hatchthickness,
  hatch thickness=5pt
}
\pgfdeclarepatternformonly[\hatchdistance,\hatchthickness]{north east hatch}
{\pgfqpoint{-1pt}{-1pt}}
{\pgfqpoint{\hatchdistance}{\hatchdistance}}
{\pgfpoint{\hatchdistance-1pt}{\hatchdistance-1pt}}%
{
  \pgfsetcolor{\tikz@pattern@color}
  \pgfsetlinewidth{\hatchthickness}
  \pgfpathmoveto{\pgfqpoint{0pt}{0pt}}
  \pgfpathlineto{\pgfqpoint{\hatchdistance}{\hatchdistance}}
  \pgfusepath{stroke}
}
\makeatother
\newcommand{\EventHorizonFuture}{\mathcal{H}^+}

\newcommand{\EventHorizonPast}{\mathcal{H}^-}

\newcommand{\timelikeInf}{i^+}
\newcommand{\timelikeNegInf}{i^-}



 \author{Allen Juntao Fang}
 \address{Universit\"at M\"unster, M\"unster,
   Deutschland
   (\href{mailto:allen.juntao.fang@uni-muenster.de}{allen.juntao.fang@uni-muenster.de})}
 \author{J\'er\'emie Szeftel}
 \address{CNRS \& Laboratoire Jacques-Louis Lions, Sorbonne Universit\'e, Paris, France
   (\href{mailto:jeremie-szeftel@sorbonne-universite.fr}{jeremie-szeftel@sorbonne-universite.fr})}
 \author{Arthur Touati}
 \address{CNRS \& Institut Mathématique de Bordeaux, Université de Bordeaux, France
   (\href{mailto:arthur.touati@math.u-bordeaux.fr}{arthur.touati@math.u-bordeaux.fr})}

\title{Spacelike initial data for black hole stability}

\begin{document}

\begin{abstract}
  We construct initial data suitable for the Kerr stability conjecture, that is,
  solutions to the constraint equations on a spacelike hypersurface
  with boundary entering the black hole horizon that are arbitrarily
  decaying perturbations of a Kerr initial data set. This results
  from a more general perturbative construction on any 
  asymptotically flat initial data set with $r^{-1}$ fall-off and the topology of
  $\RRR^3\setminus\{r<1\}$ enjoying some analyticity near and at the
  boundary. In particular, we design a suitable mixed boundary
  condition for the elliptic operator of the conformal method in order
  to exclude the Killing initial data sets (KIDS).
\end{abstract}

\maketitle

\section{Introduction}

The Kerr family of
stationary black holes is generally believed to
describe the black holes observed in astrophysics. An important question regarding
whether or not this belief is substantiated is the stability property
of this family. In this article, we construct initial data for the Kerr stability problem.

\subsection{The initial value problem in general relativity}

The Einstein vacuum equations (EVE) are the defining equations of
Einstein's theory of general relativity under the assumption of vacuum
and are given by
\begin{equation}
  \label{EVE}
  \Ric(\g) = 0,
\end{equation}
where $\g$ is a Lorentzian metric defined on a 4-dimensional manifold
$\MM$, and $\Ric$ denotes the Ricci tensor. As first shown in the seminal papers
\cite{ChoquetBruhat1952,ChoquetBruhat1969a},
there is a well-defined Cauchy problem for \eqref{EVE}, where
one solves for a maximal globally hyperbolic development $(\MM, \g)$
of a given initial data triplet $(\Sigma, g, k)$ consisting of a
3-dimensional Riemannian manifold $(\Sigma, g)$ and a symmetric two
tensor $k$ satisfying the constraint
equations. The constraint equations themselves are given by
\begin{equation}\label{constraint equations}
  \begin{aligned}
    R(g)  + (\tr_g k)^2 - |k|^2_g & = 0,
    \\ \div_g k - \mathrm{d}\tr_g k & = 0,
  \end{aligned}
\end{equation}
where $R(g)$ is the scalar curvature of $g$. 

\subsection{The Kerr stability conjecture}\label{section conjecture}

Studying the stability properties of black hole solutions to EVE can
be formulated in the context of the above Cauchy problem for EVE. We provide below a rough statement of the Kerr stability conjecture, and
refer the reader for example to the introduction of \cite{Giorgi2022,Klainerman2023} for a more detailed discussion.
  
\begin{conjecture}[Kerr stability conjecture]
  \label{conjecture}
  If $(\Sigma,g,k)$ is an initial data triplet such that $(g,k)$ is an
  appropriately small perturbation of $(g_{m,a}, k_{m,a})$ the initial
  data of a subextremal Kerr black hole $\g_{m,a}$ (i.e. $|a|<m$), then the evolution of
  $(\Sigma, g, k)$ under \eqref{EVE}, $(\MM, \g)$, has a domain of outer communication that converges in the
  appropriate sense to a nearby member of the Kerr family
  $(\MM_{m_f, a_f},\g_{m_f, a_f})$.
\end{conjecture}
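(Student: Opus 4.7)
The plan is to follow the general framework for nonlinear black hole stability developed by Klainerman--Szeftel and Giorgi--Klainerman--Szeftel for the Schwarzschild and slowly rotating Kerr cases. I would set up a bootstrap argument on a spacetime region bounded to the past by the initial slice $\Sigma$ of Conjecture~\ref{conjecture} (whose construction is the subject of the present paper), to the interior by the future event horizon $\EventHorizonFuture$, and to the exterior by a spacelike--null foliation reaching future null infinity and $\timelikeInf$. I introduce a reference Kerr metric $\g_{m(u),a(u)}$ whose parameters drift with a retarded time parameter $u$ in order to track the final state of the system, and I describe the true solution relative to this reference in a dynamical gauge involving Generally Covariant Modulated (GCM) spheres that prescribe the mass and angular momentum content of each section of the foliation. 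The bootstrap assumptions encode polynomial-in-$u$ decay for a suitable energy of the curvature, the connection coefficients and the gauge quantities, with smallness measured by the parameter $\varepsilon$ of the initial perturbation supplied by the present paper.

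For the core decay estimates I would decouple the linearized Einstein system via the Teukolsky equation for the $\pm 2$ extremal spin-weighted curvature components, then apply a Chandrasekhar-type transformation yielding a Regge--Wheeler-type equation with a positive-definite conserved energy. On top of this one needs (i) a Morawetz-type integrated local energy decay estimate absorbing the trapping at the photon orbits, (ii) an $r^p$-hierarchy \`a la Dafermos--Rodnianski yielding polynomial decay toward null infinity, and (iii) a red-shift estimate to control the flux through $\EventHorizonFuture$. From these estimates on the extremal curvature, the full null Bianchi and null structure system is recovered elliptically on each sphere of the foliation. Plugging this into the nonlinear equations and using the modulation parameters to absorb the obstructions associated with the (perturbed) Killing initial data set structure---whose careful treatment is precisely what motivates the mixed boundary condition designed in this paper---one would close the bootstrap for $\varepsilon$ sufficiently small. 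The final Kerr parameters $(m_f, a_f)$ are then extracted from the limits as $u\to\infty$ of the modulation parameters via a fixed-point argument.

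The hard part, in roughly decreasing order of difficulty, is the following. First, in the full subextremal range $|a|<m$, the interplay between superradiance and trapping is not cleanly separated by any Killing frequency decomposition, so the linear decay estimates require substantially more delicate mode-stability input than in the slowly rotating regime. Second, coupling the dynamical gauge, the modulation parameters and the nonlinear error terms requires a very carefully designed hierarchy of estimates, which is where the bulk of the technical work in the existing programs lies. Third, propagating sharp decay rates all the way up to $\timelikeInf$ is needed to identify a definite limiting member of the Kerr family. The contribution of the present paper is comparatively modest but essential for any such program: it supplies initial data on a spacelike slice with prescribed decay and with boundary lying strictly inside the black hole region, with enough flexibility to be genuinely perturbative of any Kerr initial data set.
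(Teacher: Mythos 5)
The statement you were asked to prove is labeled a \emph{conjecture} in the paper precisely because it is not proved there --- nor, to date, anywhere --- in the full subextremal range $|a|<m$. The paper's own discussion (Section~\ref{section conjecture}) makes this explicit: stability in region $III$ is only known for slowly rotating Kerr ($|a|\ll m$), while regions $I$ and $II$ are handled by local existence and by the works of Caciotta--Nicol\`o and Shen respectively. The paper's actual contribution is orthogonal to proving Conjecture~\ref{conjecture}: it constructs spacelike initial data with prescribed decay that \emph{satisfy the hypotheses} of the conjecture and of the partial stability results, by designing a mixed boundary condition for the conformal-method elliptic operator that eliminates the KIDS obstruction. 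It does not, and does not claim to, carry out the evolution argument.

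Your proposal is a competent road map of the Klainerman--Szeftel / Giorgi--Klainerman--Szeftel / Dafermos--Holzegel--Rodnianski--Taylor program (GCM spheres, modulated reference Kerr metric, Teukolsky plus Chandrasekhar transformation to Regge--Wheeler, Morawetz, $r^p$-hierarchy, red-shift, elliptic recovery from the Bianchi system, bootstrap). But a road map is not a proof, and you yourself identify the gap that makes it non-closable at present: in the full subextremal range the interplay of superradiance and trapping is not controlled by any existing technique, so the linear decay estimates (and hence the entire bootstrap) are not available outside $|a|\ll m$. What you have written therefore reproduces the state of the art for the slowly rotating case and correctly flags the open obstructions, but it does not constitute a proof of the statement as given. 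You have also mildly misattributed the role of the present paper's mixed boundary condition: it is used to kill KIDS in the \emph{elliptic} (constraint-equation) construction on the initial slice, not to absorb gauge/modulation obstructions in the evolution bootstrap.

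The correct response to a request to ``prove'' this statement would have been to say that it is an open conjecture which the paper does not prove, cite the partial results, and explain what the paper does contribute --- which is essentially your final paragraph, and is the only part that is in scope.
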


In the context of black hole stability, there are generally three
regions of interest in a black hole spacetime, separated by null cones
arising from the initial data, as indicated by the three regions $I$,
$II$, and $III$ in \Cref{fig:penrose}. We discuss briefly the
stability results in each of these regions.

\begin{figure}[H]
  \begin{tikzpicture}[scale=0.7,every node/.style={scale=1.3}]


    \def \s{5} 

    \coordinate (tInf) at (0,\s); 
    \coordinate (EventZero) at (-\s,0); 
    \coordinate (CosmoZero) at (\s,0); 
    \coordinate (tNegInf) at (0,-\s);
    \coordinate (EventSigmaZero) at (-0.8*\s,0.2*\s);

    \draw[name path=EventFuture] (tInf) --
    node[pos=0.5,left]{$\EventHorizonFuture$} (EventZero) ;  
    \draw[name path=CosmoFuture,dotted] (tInf) --
    node[pos=0.5,right]{$\;\mathscr{I}^{+}$} (CosmoZero) ; 
    \draw[name path=EventPast] (tNegInf) --
    node[pos=0.5,left]{$\EventHorizonPast$} (EventZero) ; 
    \draw[name path=CosmoPast,dotted] (tNegInf) --
    node[pos=0.5,right]{$\mathscr{I}^{-}$} (CosmoZero);

    \tikzfillbetween[of=EventFuture and EventPast]{black, opacity=0.1};
    \tikzfillbetween[of=CosmoFuture and CosmoPast]{black, opacity=0.1};

    \path[name path=SigmaZero,black,thick,out=-30,in=-175,shorten >= 0, shorten <= -10]
    (EventSigmaZero) edge coordinate[pos=0.4](SplitPoint) node[pos=0.5,below]{$\Sigma$} (CosmoZero);
    \path[name path=SplitPointFutureIngoing] (SplitPoint) -- ($(SplitPoint)+(-0.7*\s,0.7*\s)$);
    \path[name intersections={of=EventFuture and SplitPointFutureIngoing, by=EventFutureSplitPoint}];
    \path[black, thick, dashed] (EventFutureSplitPoint) edge (SplitPoint);
    \path[name path=SplitPointFutureOutgoing] (SplitPoint) -- ($(SplitPoint)+(0.7*\s,0.7*\s)$);
    \path[name intersections={of=CosmoFuture and SplitPointFutureOutgoing, by=CosmoFutureSplitPoint}];
    \path[black, thick, dashed] (CosmoFutureSplitPoint) edge (SplitPoint);

    \node[scale=0.3,fill=black,draw,circle]at(SplitPoint){};

    \node at (barycentric cs:EventSigmaZero=1,EventFutureSplitPoint=1,SplitPoint=1){$I$};
    \node at (barycentric cs:tInf=1,EventFutureSplitPoint=1,SplitPoint=1,CosmoFutureSplitPoint=1){$III$};
    \node at (barycentric cs:CosmoZero=1,CosmoFutureSplitPoint=1,SplitPoint=1){$II$};        
    
    \node[scale=0.5,fill=white,draw,circle,label=above:$\timelikeInf$]at(tInf){};
    \node[scale=0.5,fill=white,draw,circle,label=below:$\timelikeNegInf$]at(tNegInf){};
    \node[scale=0.5,fill=white,draw,circle,label=below:$i^0$]at(CosmoZero){};

  \end{tikzpicture}

  \centering
  \caption{Penrose diagram of the future of initial perturbations of Kerr.}
  \label{fig:penrose}
\end{figure}
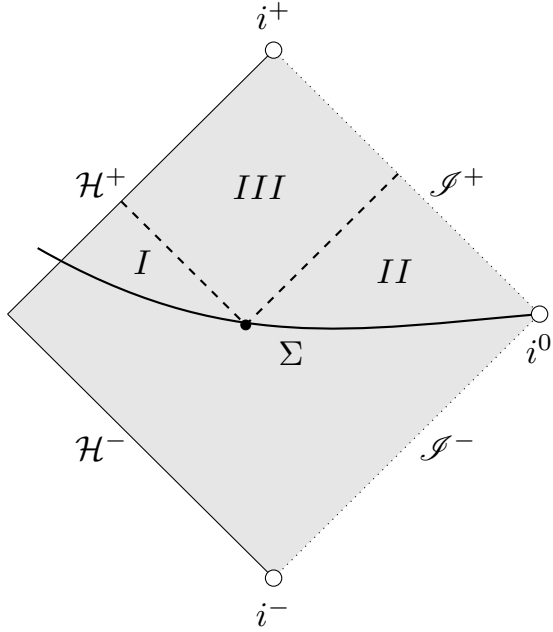
Region $I$ is a compact region of spacetime, and thus, is a local
existence regime. In region $II$, the full subextremal Kerr family
was shown to be stable by
\cite{Caciotta2010,Shen2024}. In region $III$, stability has been shown\footnote{See also \cite{Klainerman2020,Dafermos2021} for the stability of Schwarzschild (i.e. $a=0$) for restrictive initial data.} for slowly-rotating Kerr black holes (i.e. $|a|\ll m$) in \cite{Giorgi2022,Klainerman2022,Klainerman2022a,Klainerman2023,Shen2023c}, using initialization\footnote{This initialization takes place on the null cones of Figure \ref{fig:penrose} where the constraint equations take the form of nonlinear ODEs, as opposed to \eqref{constraint equations} which are non-linear elliptic PDEs.} from regions $I$ and $II$, thus proving Conjecture \ref{conjecture} in the slowly-rotating case.

\subsection{First version of the main result}

Our goal is to construct $(g, k)$ such that $(\Sigma, g, k)$ is
an initial data triplet satisfying the constraint equations \eqref{constraint equations} and is an
arbitrarily decaying perturbation of the initial data for a given Kerr black hole in order to satisfy the requirements in
\cite{Caciotta2010,Shen2024}. We remark that, by simply taking $a=0$ or small compared to the mass, this will also induce
data that suffices for
\cite{Dafermos2021,Klainerman2020,Klainerman2023}. We
give here a rough version of our main result, see Theorem \ref{coro
  kerr} for the precise statement.

\begin{theorem}[Rough statement]
  \label{theo rough}
  Let $(\Sigma, g_{m,a}, k_{m,a})$ be the spacelike initial
  data set induced by the Kerr metric $\g_{m,a}$, and let\footnote{We denote by $\mathbb{N}$ the set of nonnegative integers and by $\mathbb{N}^*$ the set of positive integers.}
  $q\in\mathbb{N}^*$, $0<\de<1$ and $\e>0$ small enough. There
  exists a codimension $4q^2$ family of perturbations $\pth{g_{m,a} + \tilde{g}, k_{m,a}+\tilde{k}}$
  of $(g_{m,a}, k_{m,a})$ such that
  \begin{equation}\label{decay theorem}
    \abs*{\tilde{g}} \sim \varepsilon (1+r)^{-q-\delta}, \quad \abs*{\tilde{k}} \sim \varepsilon (1+r)^{-q-\delta-1},
  \end{equation}
  and $\pth{\Sigma, g_{m,a} + \tilde{g}, k_{m,a}+\tilde{k}}$
  satisfies the constraint equations \eqref{constraint equations}.
\end{theorem}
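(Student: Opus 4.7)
The plan is to use the conformal method of Lichnerowicz and York: writing $g = \phi^{4}\hat{g}$ and $k = \phi^{-2}(\hat\sigma + LW) + \tfrac{1}{3}\tau g$, with $L$ the conformal Killing operator, the constraints \eqref{constraint equations} are recast as a coupled semilinear elliptic system for the conformal factor $\phi$ and the vector field $W$, once the free data $\hat g,\hat\sigma,\tau$ are prescribed as perturbations of the Kerr seed. The aim is to invert this system perturbatively around the Kerr background $(g_{m,a}, k_{m,a})$ in weighted function spaces tailored to the target decay, with a carefully designed boundary condition at $\partial\Sigma = \{r=1\}$.

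First I would set up weighted Sobolev (or H\"older) spaces on $\Sigma = \RRR^{3}\setminus\{r<1\}$ compatible with the decay $(1+r)^{-q-\delta}$ for $\tilde g$ and $(1+r)^{-q-\delta-1}$ for $\tilde k$, and linearize the Lichnerowicz-momentum system around Kerr to obtain an elliptic operator $\mathcal{L}$. A mode-by-mode asymptotic analysis in inverse powers of $r$ and spherical harmonics shows that $\mathcal{L}$ is Fredholm between these spaces, with a cokernel fed by the polynomial modes growing no faster than $r^{q-1}$. Counted over the four scalar components of the $(\phi, W)$ system, via a Hodge-type decomposition of $W$ on the asymptotic spheres, these modes yield precisely the codimension $4q^{2}$ appearing in the statement.

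The crux of the argument is the boundary condition used to remove the KIDS obstruction. A KID on $\Sigma$ generates a Killing field of the Cauchy development, and Kerr carries the stationary Killing field, so naive Dirichlet or Neumann conditions leave $\mathcal{L}$ with a nontrivial kernel. I would instead impose a mixed (Robin-type) condition coupling the Dirichlet and Neumann traces of $(\phi, W)$ in such a way that any homogeneous solution compatible with it would generate a KID forced to vanish in a collar neighborhood of $\partial\Sigma$; the real-analyticity of the background near the boundary then propagates this vanishing to all of $\Sigma$ by a unique-continuation argument. Verifying that this boundary condition is elliptic in the Lopatinskii-Shapiro sense for the Lichnerowicz-momentum system, while remaining compatible with the weighted Fredholm theory at infinity, is where the main technical difficulty lies.

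With $\mathcal{L}$ invertible modulo its $4q^{2}$-dimensional cokernel, the nonlinear constraints are solved by an implicit function theorem argument applied to the map sending free data to the constraint residual. Restricting the free data to a codimension $4q^{2}$ slice transverse to the cokernel and iterating, the smallness of the quadratic error yields a genuine nonlinear solution whose decay rates are inherited from the weighted boundedness of $\mathcal{L}^{-1}$; specialization to the Kerr seed then produces the advertised family of initial data.
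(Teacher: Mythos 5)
Your proposal identifies all the right ingredients — conformal method, weighted Fredholm theory, mixed boundary conditions engineered so that analyticity near $\partial\Sigma$ plus unique continuation kill the KIDS, and a nonlinear iteration — so you are on the right track. There are, however, two gaps worth flagging, and one of them is where all the real work of the paper lives.

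First, you assert the existence of a Robin-type boundary condition that leaves no kernel and is incompatible with any analytic KID, but you give no construction. In the paper this is the technical heart: they start from a Neumann-type boundary operator $B_\nu$ and add a zeroth-order perturbation $F$, which is produced by an iterative kernel-reduction argument (Lemmas 4.4--4.5, Corollary 4.6, based on Bartnik's perturbation theory). More importantly, the $F$ they construct is not a generic Robin coupling: it is required to vanish identically on one open subset $\mathcal{U}\subset\partial\Sigma$ \emph{and} to equal the identity on a disjoint open subset $\mathcal{V}$. Both properties are used. The KIDS-elimination argument then runs as follows: a cokernel element $\mathbb{W}$ satisfying $(B_\nu^*+F^*)(\mathbb{W})=0$ has $B_\nu^*(\mathbb{W})=0$ on $\mathcal{U}$, analyticity extends this to all of $\partial\Sigma$, hence $F^*(\mathbb{W})=0$, hence $\mathbb{W}=0$ on $\mathcal{V}$, hence (again by analyticity) $\mathbb{W}=0$ on $\partial\Sigma$, and only then does unique continuation from the boundary finish the job. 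Your description — a KID "forced to vanish in a collar" which then propagates — skips the boundary-only bootstrap and, more to the point, could not work with an arbitrary Robin condition; the vanishing/identity dichotomy on $\partial\Sigma$ is essential. Also note that the unique continuation used is from Cauchy data $(\mathbb{W}=0, B_\nu^*\mathbb{W}=0)$ on an open subset of $\partial\Sigma$, a Carleman-estimate argument for the coupled system, not just propagation of a collar-vanishing.

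Second, a smaller point: you write the data in Lichnerowicz--York form with a prescribed mean curvature $\tau$ and TT decomposition $\hat\sigma+LW$, which suggests a CMC-type decoupling. The paper instead follows Corvino's formulation in terms of $\pi=k-(\tr_g k)\,g$ with the ansatz $(g,\pi)=\bigl((1+\wc u)^4\hat g,\,(1+\wc u)^2(\hat\pi+L_{\hat g}\wc X)\bigr)$, leading to a genuinely coupled first-order system $P(\wc u,\wc X)$ that is treated as a compact perturbation of the (scalar plus vectorial) Neumann Laplacian. This is a workable difference of bookkeeping, not a gap, but you should be aware that the paper never invokes a Lopatinskii--Shapiro verification; ellipticity of the boundary problem is imported from the Neumann Laplacian via compactness of the perturbation. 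The cokernel count $4q^2$ and the implicit-function/fixed-point conclusion are then as you describe, with the correctors chosen in a $4q^2$-dimensional space of smooth compactly supported tensors to enforce the orthogonality relations.
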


Theorem \ref{theo rough} provides initial data for the works mentioned in Section \ref{section conjecture}:
\begin{itemize}
    \item In region $II$, \cite{Caciotta2010} requires $q\geq 3$ and \cite{Shen2024} requires $q\geq 1$, with $0<\de<1$ in both cases.
    \item In region $III$, \cite{Klainerman2020,Klainerman2023} require $q=1$ and \cite{Dafermos2021} requires $q=2$, with, in both cases, $\de=\half + \de'$ and $0<\de'\ll 1$.
\end{itemize}

\subsection{Construction of initial data sets}

There is a rich literature on the constraint equations \eqref{constraint equations} and we refer the reader for example to \cite{Carlotto2021} for a detailed review. The main two approaches to produce solutions to these equations are the conformal method and the gluing method. First introduced in \cite{Lichnerowicz1944}, the conformal method proposes a conformal ansatz for $(g,k)$ and transforms the quasi-linear underdetermined system \eqref{constraint equations} into a semi-linear elliptic system for a scalar function and a vector field. This approach has been successful in constructing many interesting solutions to the constraint equations, including those with large mean curvature or with apparent horizons \cite{Avalos2022,Dilts2014,Holst2009,Holst2015,Holst2018,Maxwell2005,Maxwell2009}. However these results fall short of constructing perturbations of a given black hole satisfying \eqref{decay theorem}. This is due to the fact that these constructions are typically done within the range of invertibility of the relevant elliptic operators resulting in perturbations that decay like $r^{-\delta}$ for $0<\de<1$, falling short\footnote{For instance, applying Theorem 1.17 from \cite{Bartnik1986} could improve the decay of the solutions but only modulo harmonic polynomials.} of the decay rates assumed in the aforementioned stability results mentioned right after Theorem \ref{theo rough}.

The gluing method, first introduced in \cite{Corvino2000} and later extended in \cite{Corvino2006,Chrusciel2003}, consists of gluing any asymptotically flat solution of \eqref{constraint equations} to an exact Kerr initial data set outside a far away annulus. However, critically, these constructions, viewed as a perturbation of the background black hole metric, are compactly supported, whereas we aim to construct non-compactly supported initial data.

\begin{remark}
Gluing results use the Kerr initial data, viewed as a 10-parameter family, to saturate the 10-dimensional space of linear obstructions coming from the symmetries of Minkowski. Such families are called admissible families in \cite{Corvino2006} (see also \cite{Corvino2020}) and reference families in \cite{Chrusciel2003}. Theorem \ref{theo rough} exhibits infinitely many such admissible or reference families with the same asymptotics as Kerr at main order. As a result, one should be able to glue asymptotically flat initial data to arbitrarily decaying perturbations of Kerr. 
\end{remark}

\begin{remark}
In Theorem 8.9 of \cite{Chrusciel2003}, initial data coinciding to arbitrary order with Simon-Beig approximate stationary vacuum spacetimes (see \cite{Simon1983}) are constructed. This gives particular examples of initial data sets close to a given Kerr with arbitrary fall-off. Given that they are parametrized by the formal stationary solutions of \cite{Simon1983}, they form a very thin subset of the solutions constructed in this paper. Indeed, consider as an analogy the scalar wave equation in $\RRR^{1+3}$ with data $(\psi_0,\psi_1)$: in that context, the equivalent of our solutions is the whole space $H^2_{-q-\de}(\RRR^3)\times H^1_{-q-\de-1}(\RRR^3)$ while the equivalent of the solutions constructed in Theorem 8.9 of \cite{Chrusciel2003} is the subspace $\left\{\text{$\De\psi_0=0$ outside of a ball}\right\}\times\{\psi_1=0\}$.
\end{remark}

The authors have previously constructed and parametrized initial data perturbations of Minkowski space with arbitrary decay \cite{Fang2024}. While the overall methodology is similar, there are two key differences between the current paper and \cite{Fang2024}. We first recall that in \cite{Fang2024} the Killing initial data sets (KIDS) presented a fundamental, linear obstacle to the construction. Since black hole spacetimes have fewer Killing vectorfields than Minkowski, there are fewer KIDS to deal with in the present paper than in \cite{Fang2024}. In addition, motivated by the stability of Minkowski, initial data were constructed in \cite{Fang2024} for a manifold without boundary. On the other hand, in the present paper, initial data are constructed for a manifold with boundary, reflecting the presence of an event horizon.  The presence of the boundary adds flexibility, as the elliptic problem to be solved does not itself impose boundary conditions. This will play a crucial role in the ensuing proof, where we take advantage of this freedom.

\subsection{Sketch of proof}

We now give a brief sketch of the proof of Theorem \ref{theo rough}. Rather than considering initial data $(\Sigma,g, k )$ it will be convenient to instead consider initial data triplets $(\Sigma, g, \pi)$, where $\pi=k-(\tr_g k)g$ will be referred to as the \emph{reduced second fundamental form} of $\Sigma$.

Using the conformal method, we consider as an ansatz $(g,\pi)$ of the form
\begin{align} \label{ansatz rough}
(g,\pi)= \pth{ (1+\wc{u})^4 \pth{ g_{m,a} + \wc{g}}, (1+\wc{u})^2\pth{\pi_{m,a} + \wc{\pi} + L_{g_{m,a}}\wc{X}} },
\end{align}
where $L_{g_{m,a}}$ is a modified Lie derivative (see
\eqref{eq:modified-Lie-derivative:def}), and where $(\wc{g},\wc{\pi})$ are the "seed" of
the construction and enjoy smallness and decay satisfying
\begin{align} \label{seed assumption}
\sup_{k=0,1} \pth{ r^k | \nab^k \wc{g}| + r^{k+1}|\nab^k \wc{\pi}| } \leq \frac{\e}{(1+r)^{q+\de}}.
\end{align}

\begin{remark}
The conformal ansatz \eqref{ansatz rough} does not correspond to the most standard conformal formulation of the constraint, which usually makes use of the conformal Killing operator to distinguish between the tracefree and trace part of the second fundamental form. Here, our goal is to study the perturbative regime for the constraint equations, a setting where it suffices to consider the less precise ansatz of \cite{Corvino2006}.
\end{remark}

The constraint equations \eqref{constraint equations} for $(g,\pi)$ then become the following system of PDEs on $\Si$
\begin{align} \label{system rough}
P\pth{\wc{u},\wc{X}} = D\Phi[g_{m,a},\pi_{m,a}](\wc{g},\wc{\pi}) + \text{non-linear terms},
\end{align}
where $P$ is an elliptic operator depending only on $(g_{m,a},\pi_{m,a})$, and $D\Phi[g_{m,a},\pi_{m,a}]$ denotes the linearization of the constraint operator at $(g_{m,a},\pi_{m,a})$. \Cref{theo rough} then asks whether there exists a solution $\left( \wc{u},\wc{X} \right)$ to \eqref{system rough} satisfying 
\begin{align} \label{decay u X}
|\wc{u}| + r\left|\nab \wc{X}\right| \lesssim \frac{\e}{(1+r)^{q+\de}}.
\end{align}
We remark that applying the conformal method does not impose boundary conditions on $\dr\Si$ for the solutions $\pth{\wc{u}, \wc{X}}$ to \eqref{system rough}, which will play a critical role as mentioned above.

To address this question, we study the behavior of the elliptic operator $P$ on weighted Sobolev spaces\footnote{Recall that $u\in H^2_\eta(\Sigma)$ implies $|u|\lesssim r^\eta$ at infinity, see Definition \ref{def:Hkq}.}  $H^2_{-q-\de}(\Si)$.  Using qualitative properties of elliptic operators to understand the decay of their solutions is a classical method, and we illustrate the basic idea behind with toy model of the scalar Laplacian on Euclidean space before addressing the full problem at hand. The mapping properties of the Laplacian
\begin{equation}\label{eq:Laplacian-weighted-Sobolev-spaces}
\Laplacian: \HkqScalar{2}{\delta}(\Real^3)\longrightarrow \HkqScalar{0}{\delta-2}(\Real^3)
\end{equation}
are well studied \cite{Bartnik1986,McOwen1979}, and in particular, it is known that the Laplacian as defined in \eqref{eq:Laplacian-weighted-Sobolev-spaces} is a Fredholm operator, if $\de\notin\mathbb{Z}$. This implies, via the Fredholm alternative, that provided $h$ is orthogonal to the kernel of $\Laplacian^{*}$, then $\Laplacian \varphi =h$ is solvable. Thus, given some $h\in \HkqScalar{0}{\delta-2}(\Real^3)$, finding a perturbation $\tilde{h}$ such that $\Laplacian \varphi  =  h+\tilde{h}$ is solvable in $\HkqScalar{2}{\delta}(\Real^3)$ reduces to solving for $\tilde{h}$ such that
\begin{equation}\label{produit scalaire rough}
\ps{h+\tilde{h}, w}_{L^2(\Real^3)} = 0, \quad \text{for all } w\in \ker (\Laplacian^{*}).
\end{equation}
In this case of the Laplacian, the kernel of $\Laplacian^{*}$ can be explicitly described and it consists just of harmonic polynomials. Thus, a suitable perturbation $\tilde{h}$ can be explicitly solved for. In particular, this toy model emphasizes the fact that a full characterization of the cokernel of the elliptic operator under consideration is crucial.

We now return to giving a sketch of the proof in our setting. It can be verified that $P$ is also a Fredholm operator acting between weighted Sobolev spaces. However, characterizing the kernel of $P^{*}$ is more delicate. We characterize the kernel of $P^{*}$ as perturbations of the harmonic polynomials. The critical tool will be a good choice of mixed boundary condition on $\dr\Si$ of the form
\begin{align}\label{boundary condition intro}
(B_\nu + F)\pth{\wc{u},\wc{X}} = 0,
\end{align}
such that the operator $(P,B_\nu + F)$ acting on $H^2_{-\de}(\Si)$ for $0<\de<1$ will be injective, and where $B_\nu$ is a Neumann type boundary operator defined on $\dr\Si$. We will also require that $F$ vanishes identically on an open subset of $\partial\Sigma$ and is the identity on another open subset. This condition on $F$ is not necessary to characterize the kernel of $P^{*}$ but will play a key role in eliminating certain obstructions in what follows. 

\begin{remark}
As opposed to the boundary condition considered for instance in \cite{Maxwell2005}, our boundary condition \eqref{boundary condition intro} has no geometric interpretation. In addition to ensuring solvability of our elliptic problem with appropriately decaying solutions, it will also allow us to exclude the KIDS, as we will see below.
\end{remark}

To ensure the orthogonality conditions necessary for $P$ to be invertible, we will modify the ansatz
\eqref{ansatz rough} to include compactly supported symmetric
2-tensors $(\breve{g},\breve{\pi})$ such that the elliptic system
reads
\begin{align}\label{system rough 2}
  P\pth{\wc{u},\wc{X}} = D\Phi[g_{m,a},\pi_{m,a}]\pth{(\wc{g},\wc{\pi}) + (\breve{g},\breve{\pi}) } + \text{non-linear terms}.
\end{align}
The correctors $(\breve{g},\breve{\pi})$ will then be chosen in a $4q^2$ dimensional space such as to ensure the following analogue of \eqref{produit scalaire rough}:
\begin{align}\label{condition breve}
  \ps{(\breve{g},\breve{\pi}),D\Phi[g_{m,a},\pi_{m,a}]^*(\mathbb{W})}= - \ps{D\Phi[g_{m,a},\pi_{m,a}](\wc{g},\wc{\pi}),\mathbb{W}} + \text{non-linear terms}
\end{align}
for all $\mathbb{W}$ in the cokernel of $(P,B_\nu + F)$. The
linearization of the system in \eqref{condition breve} is solvable if
$D\Phi[g_{m,a},\pi_{m,a}]^*$ acts injectively on the cokernel of
$(P,B_\nu + F)$ restricted to the support of
$(\breve{g},\breve{\pi})$. Now, since the Kerr initial data
$(g_{m,a},\pi_{m,a})$ can be proved to be analytic near and at the
boundary, elements of $\mathrm{ker}\pth{D\Phi[g_{m,a},\pi_{m,a}]^*}$ can also be proved
to be analytic near and at the boundary (see Lemma \ref{lem regularity kids} for the precise statement). Therefore, they cannot
satisfy the boundary condition \eqref{boundary condition intro} since
we assumed that $F$ vanishes in an open set of $\partial\Sigma$ and is the identity on another one. This shows how $D\Phi[g_{m,a},\pi_{m,a}]^*$ acts
injectively on the cokernel of $(P,B_\nu + F)$ (see Lemma \ref{lem KIDS} for the details).

The above procedure illustrates how a careful choice of $F$ allows us to simultaneously
describe the mapping properties of $P$ and also to ensure that we can
appropriately solve for the correctors $(\breve{g}, \breve{\pi})$ by
eliminating the KIDS, i.e. the elements of $\mathrm{ker}(D\Phi[g_{m,a},\pi_{m,a}]^*)$, which are well-known linear obstructions to solving
\eqref{condition breve}. A straightforward fixed point argument then concludes the
resolution of the non-linear system \eqref{system rough 2}.

\begin{remark}\label{remark more general}
In fact, the only properties of the Kerr initial data set we use are its analyticity near and at the boundary, and the fact that it solves \eqref{constraint equations}. For this reason, we prove a more general statement on asymptotically flat solutions of \eqref{constraint equations} in the sense of Definition \ref{def AF} which are analytic near and at the boundary, see Theorem \ref{theo general}, from which the result for Kerr is then deduced, see Theorem \ref{coro kerr}.
\end{remark}

\subsection{Outline of the article}

We conclude this introduction by giving an outline of the article:
\begin{itemize}
\item In Section \ref{section preliminaries}, we start with geometric and analytic preliminaries.
\item In Section \ref{section main result}, we give a precise
  statement of our main result Theorem \ref{theo general}, from which we deduce Theorem \ref{theo rough}.
\item In Section \ref{section elliptic}, we study the properties of the elliptic
  operator $P$ at the heart of our construction.
\item Finally, in Section \ref{section proof main result} we prove
  Theorem \ref{theo general}.
\end{itemize}

\subsection*{Acknowledgments}

The first author acknowledges support from NSF award DMS-2303241
and support through Germany’s Excellence Strategy EXC 2044 390685587,
Mathematics Münster: Dynamics--Geometry--Structure. The second author is supported by the ERC grant ERC-2023 AdG 101141855 BlaHSt.

\section{Preliminaries}
\label{section preliminaries}

\subsection{Notations and geometry}

Throughout the paper, we will use Einstein summation notation, with roman
indices referring to spatial indices $\{1,2,3\}$. 

\begin{definition}
  \label{def AF}
  A triplet $(\Si,\bar{g},\bar{\pi})$, where $\Sigma$ is a 3-manifold
  with boundary, $\bar{g}$ is a Riemannian metric on $\Si$ and
  $\bar{\pi}$ is a symmetric 2-tensor is called \emph{asymptotically flat}
  if:
  \begin{itemize}
  \item[(i)] $\Si$ is smooth, diffeomorphic to $\RRR^3\setminus B$
    where $B$ is an open ball in $\RRR^3$ and there exists a global
    coordinate system $(x^1,x^2,x^3)$ on $\Si$ such that
    $\dr\Si=\{r=r_0\}$ for some $r_0>0$ where
    $r\vcentcolon=\sqrt{(x^1)^2+(x^2)^2 + (x^3)^2}$,
  \item[(ii)] $(\bar{g},\bar{\pi})$ is smooth and asymptotically
    flat, i.e. there exists constants $C_{k}$ such that for every
    $k\in\mathbb{N}$ we have
    \begin{align*}
      \sup_\Si \pth{ r^{k+1}\left| \nab^k (\bar{g}_{ij} - \de_{ij})\right| + r^{k+2}\left| \nab^k \bar{\pi}_{ij} \right| } \leq C_k,
    \end{align*}
    where $\nab=\pth{\dr_{x^1},\dr_{x^2},\dr_{x^3}}$.
  \end{itemize}
  We will also refer to $\Sigma$ satisfying the above conditions as an
  asymptotically flat manifold with boundary. 
\end{definition}

\begin{remark}
Note that more general definitions of asymptotic flatness can be found in the literature. For instance, \cite{Bartnik1986} considers metric such that $g_{ij}-\de_{ij}=\GO{r^{-\tau}}$ for $\tau>0$. Moreover, our choice of topology, i.e. $\RRR^3\setminus B$, is directly motivated by the Kerr stability conjecture's setting.
\end{remark}

\subsection{Function spaces}

On $(\Sigma,\bar{g},\bar{\pi})$ an asymptotically flat manifold with
boundary, as defined in \Cref{def AF}, we use the standard $L^p(\Si)$
spaces for $1\leq p \leq \infty$, where the integration is defined
with respect to the volume form of $\bar{g}$. Moreover, the boundary
$\dr\Si$ is a compact 2-manifold and we will use the standard
$H^s(\dr\Si)$ spaces for $s\in\RRR$. We now define function spaces
with weights at infinity.

\begin{definition}
  \label{def:Hkq}
  Let $(\Sigma,\bar{g},\bar{\pi})$ be an asymptotically flat manifold with boundary as
  defined in \Cref{def AF} and let $k\in \mathbb{N}$ and $\de\in\RRR$.  
  \begin{enumerate}[label=(\roman*)]
  \item We define the \emph{weighted Sobolev spaces} $H^k_\de(\Si)$ to be the set of functions in $L^2_{loc}(\Si)$ for which the following norm is finite
    \begin{equation*}
      \norm*{u}_{H^{k}_{\delta}(\Sigma)} \vcentcolon = \sum_{\abs*{\alpha}\le k}\norm*{(1+r)^{-\delta-\frac{3}{2}+\abs*{\alpha}}\nabla^{\alpha}u}_{L^2(\Sigma)}.
    \end{equation*}
    We denote $L^2_\de(\Si)=H^0_\de(\Si)$.
  \item We define the \emph{weighted Hölder spaces} $C^k_\de(\Si)$ to be the set of functions in $W^{k,\infty}_{loc}(\Si)$ for which the following norm is finite
    \begin{equation*}
      \norm*{u}_{C^{k}_{\delta}(\Sigma)} \vcentcolon = \sum_{\abs*{\alpha}\le k}\norm*{(1+r)^{-\de+\abs*{\alpha}}\nabla^{\alpha}u}_{L^\infty(\Sigma)}.
    \end{equation*}
  \item We extend these definitions to tensors of any type by summing
    over components in the global coordinates system $(x^1,x^2,x^3)$
    of \Cref{def AF}.
  \end{enumerate}
\end{definition}

In the following lemma, we gather standard properties of these
spaces. Proofs of these properties can be found in Appendix I of
\cite{ChoquetBruhat2009} or in \cite{ChoquetBruhat1981,Bartnik1986,Maxwell2006} (beware that we use Bartnik's convention for weighted Sobolev spaces which differs from Choquet-Bruhat's).

\begin{lemma}\label{lem embedding}
  Let $k,k_1,k_2\in\mathbb{N}$ and $\de,\de_1,\de_2\in\RRR$.
  \begin{itemize}
  \item[(i)] If $k\geq 1$ and $\de'<\de$ then the inclusion
    $H^k_{\de'}(\Si)\subset H^{k-1}_{\de}(\Si) $ is compact.
  \item[(ii)] If $k\geq 2$ then $H^k_\de(\Si)\subset C^0_\de(\Si)$ and
    this inclusion is continuous.
  \item[(iii)] If $k\leq \min(k_1,k_2)$, $k<k_1+k_2-\frac{3}{2}$ and
    $\de>\de_1+\de_2$ then multiplication is a continuous map from
    $H^{k_1}_{\de_1}(\Si)\times H^{k_2}_{\de_2}(\Si)$ to
    $H^k_\de(\Si)$.
  \end{itemize}
\end{lemma}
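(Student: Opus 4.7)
The plan is to prove all three statements via a uniform dyadic decomposition that reduces each assertion to its unweighted counterpart on a fixed reference annulus. Set $A\vcentcolon=\{1/2\le|y|\le 2\}\subset\mathbb{R}^3$, define the annuli $A_j\vcentcolon=\{2^{j-1}\le r\le 2^{j+1}\}$ for $j\ge j_0$ with $j_0$ chosen large enough that $A_j$ lies in the asymptotic region, and use the rescaling $\Phi_j(y)\vcentcolon=2^j y:A\to A_j$. A direct change of variables, using that $\mathrm{d}V_{\bar g}$ is uniformly comparable to the Euclidean volume form by \Cref{def AF}(ii) and that $\nabla^\alpha u(\Phi_j(y))=2^{-j|\alpha|}\widetilde{\nabla}^\alpha\tilde u(y)$ with $\tilde u\vcentcolon=u\circ\Phi_j$, yields the fundamental scaling equivalence
\begin{equation*}
  \norm{u}_{H^k_\de(A_j)}^2 \sim 2^{-2j\de}\,\norm{\tilde u}_{H^k(A)}^2,
\end{equation*}
with analogous relations for the $C^k_\de$ norms. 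The compact core $\Sigma\cap\{r\le 2^{j_0+1}\}$ is then handled by ordinary Sobolev theory on a compact manifold with boundary and plays no role in the weighted asymptotics.

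For (i), given a sequence $(u_n)$ bounded in $H^k_{\de'}(\Sigma)$, the rescaled functions $\tilde u_n^{(j)}$ are uniformly bounded in $H^k(A)$, hence relatively compact in $H^{k-1}(A)$ by the classical Rellich–Kondrachov theorem. A diagonal extraction produces a subsequence converging in $H^{k-1}(A_j)$ for every fixed $j$, and the tail is controlled by the weight gap via
\begin{equation*}
  \norm{u_n}_{H^{k-1}_\de\pth{\bigcup_{j\ge J}A_j}}^2 \lesssim \sum_{j\ge J} 2^{2j(\de'-\de)} \norm{u_n}_{H^k_{\de'}(A_j)}^2 \lesssim 2^{2J(\de'-\de)}\norm{u_n}_{H^k_{\de'}(\Sigma)}^2,
\end{equation*}
which tends to $0$ uniformly in $n$ as $J\to\infty$ because $\de'<\de$. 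The Cauchy property in $H^{k-1}_\de(\Sigma)$ then follows. For (ii), the three-dimensional Sobolev embedding $H^2(A)\hookrightarrow C^0(A)$ combined with the scaling equivalences gives
\begin{equation*}
  \sup_{A_j}|u| = \sup_A|\tilde u| \lesssim \norm{\tilde u}_{H^2(A)} \lesssim 2^{j\de}\norm{u}_{H^2_\de(\Sigma)},
\end{equation*}
which is exactly the pointwise bound $|u(x)|\lesssim (1+r)^\de\norm{u}_{H^2_\de(\Sigma)}$ characterizing $C^0_\de(\Sigma)$.

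For (iii), the classical bilinear estimate $H^{k_1}(A)\cdot H^{k_2}(A)\hookrightarrow H^k(A)$, valid precisely under $k\le\min(k_1,k_2)$ and $k<k_1+k_2-3/2$, transfers via the scaling identities to
\begin{equation*}
  \norm{uv}_{H^k_\de(A_j)} \lesssim 2^{j(\de_1+\de_2-\de)}\norm{u}_{H^{k_1}_{\de_1}(A_j)} \norm{v}_{H^{k_2}_{\de_2}(A_j)}.
\end{equation*}
Since $\de>\de_1+\de_2$ the prefactor is bounded by $1$; summing over $j$ and bounding one factor by the global norm yields the desired continuous multiplication map, with the compact-core contribution handled separately by the standard bounded-domain inequality. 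The only technical point common to all three parts is the uniformity in $j$ of the rescaled metric $\Phi_j^*\bar g$, but this is an immediate consequence of the quantitative decay in \Cref{def AF}(ii), which makes $\Phi_j^*\bar g\to\de_{ij}$ smoothly as $j\to\infty$, so no genuine obstacle arises. This reproduces the standard proof scheme of Cantor, McOwen, and Bartnik referenced via \cite{choquet-bruhatGeneralRelativityEinstein2009}.
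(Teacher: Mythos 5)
The paper does not give a proof of \Cref{lem embedding}; it simply refers to \cite{choquet-bruhatGeneralRelativityEinstein2009} for these standard facts. Your dyadic-annulus rescaling argument is exactly the classical proof one finds there (and in Cantor, McOwen, Bartnik): the scaling identity $\norm{u}_{H^k_\de(A_j)}\sim 2^{-j\de}\norm{\tilde u}_{H^k(A)}$ with the paper's normalization $(1+r)^{-\de-3/2+|\alpha|}$ is correctly computed, the tail bound $2^{2J(\de'-\de)}$ plus Rellich--Kondrachov on the reference annulus gives (i), Sobolev embedding on $A$ gives (ii), and the scale-invariant bilinear estimate on $A$ with the prefactor $2^{j(\de_1+\de_2-\de)}\le 1$ gives (iii). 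The argument is correct and complete, including the observation that asymptotic flatness guarantees uniform control of $\Phi_j^*\bar g$ needed for the constants to be independent of $j$.
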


\subsection{The constraint operator}
\label{sec:constraint-operator}

Using the reduced second fundamental form $\pi$, the
constraint equations \eqref{constraint equations} can be rewritten as
\begin{equation}
  \label{constraint equations 2}
  \begin{aligned}
    R(g)  + \half(\tr_g \pi)^2 - |\pi|^2_g & = 0,
    \\ \div_g \pi  & = 0.
  \end{aligned}
\end{equation}
If $g$ is a Riemannian metric on $\Si$ and $\pi$ is a
symmetric 2-tensor on $\Si$, we define
\begin{align*}
  \HH(g,\pi) & \vcentcolon = R(g) + \half (\tr_{g}\pi)^2 - |\pi|^2_{g},
  \\ \MM(g,\pi) & \vcentcolon = \div_{g}\pi.
\end{align*}
For these operators, we will use the following notation for their
expansion around some given $(g,\pi)$:
\begin{align}
  \HH(g+h,\pi+\varpi) - \HH(g,\pi) & =  D\HH[g,\pi](h,\varpi) + Q_\HH[g,\pi](h,\varpi),\label{expansion H}
  \\ \MM(g+h,\pi+\varpi) - \MM(g,\pi) & = D\MM[g,\pi](h,\varpi) + Q_\MM[g,\pi](h,\varpi),\label{expansion M}
\end{align}
where $D\HH[g,\pi]$ and $D\MM[g,\pi]$ are the linearization of $\HH$
and $\MM$ and where $Q_\HH[g,\pi](h,\varpi)$ and
$Q_\HH[g,\pi](h,\varpi)$ contain all quadratic and higher order terms
in $(h,\varpi)$. Note that in this article, and in particular in the
following lemma, we do not distinguish between coefficients of a
metric $g$ and coefficients of its inverse and thus simply write $g$
in the schematic notation for error terms.

\begin{lemma}
  \label{lem linearized constraint}

  Let $g$ be a Riemannian metric on $\Si$ and $\pi$ a symmetric
  2-tensor on $\Si$. We have
  \begin{align}
    D\HH[g,\pi](h,\varpi)  ={}& - \De_{g}\tr_{g}h  + D^i D^j h_{ij} - h^{ij} Ric(g)_{ij}\label{DH}
    \\&\quad + \tr_{g}\pi\pth{ \tr_{g}\varpi - \pi^{ij} h_{ij}} - 2\pi^{ij} \varpi_{ij} + 2 h^{ij}g^{k\ell}\pi_{ik}\pi_{j\ell} ,\nonumber
    \\ D\MM[g,\pi](h,\varpi)_i  ={}&  \div_{g}\varpi_i  - h^{k\ell}D_{k}\pi_{\ell i} - \pi_i^j g^{k\ell}\pth{   D_k h_{\ell j} - \half D_j h_{k\ell}}   - \half  \pi^{k\ell} D_i h_{k\ell},\label{DM}
  \end{align} 
  where $D$ is the covariant derivative of $g$ and all inverse are
  with respect to $g$. Moreover the terms $Q_\HH[g,\pi](h,\varpi)$ and
  $Q_\MM[g,\pi](h,\varpi)$ have the following schematic form
  \begin{align}
    Q_\HH[g,\pi](h,\varpi)  ={}& h\nab^2 h + (g  \nab g) (h \nab h) + g^2\pth{(\nab h)^2 + \varpi^2}  + \pth{(\nab g)^2 + \pi^2 } h^2 \notag
    \\
                               & +  g\pi h\varpi  + g h(\nab h)^2 + h^2 \nab g\nab h + h^2\pth{ (\nab h)^2 + \varpi^2} + \pi h^2\varpi , \label{QH}
    \\ Q_\MM[g,\pi](h,\varpi) ={}&  g^2 h\nab\varpi + (g\nab g) h \varpi  + g\pi h \nab h  + h^2 \pi \nab g \notag
    \\
                               & + g h \varpi\nab h + h^2\varpi \nab g + h^2\pi \nab h + h^2 \varpi\nab h. \label{QM}
  \end{align}
\end{lemma}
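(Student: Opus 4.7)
The proof is a direct computation that splits into three parts: linearizing $\HH$, linearizing $\MM$, and reading off the schematic form of the quadratic-and-higher remainders from a Taylor expansion. Throughout, the plan relies on the standard variational formulas $D(g^{ij})[g](h) = -h^{ij}$ (which follows by differentiating $g^{ik}g_{kj}=\delta^i_j$) and $D\Gamma^k_{ij}[g](h) = \half g^{k\ell}\pth{D_i h_{j\ell} + D_j h_{i\ell} - D_\ell h_{ij}}$, together with the classical Lichnerowicz formula $DR[g](h) = -\De_g \tr_g h + D^i D^j h_{ij} - h^{ij}\Ric(g)_{ij}$ for the linearized scalar curvature.

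For $D\HH$, I would split $\HH(g,\pi) = R(g) + \half(\tr_g\pi)^2 - |\pi|_g^2$ and linearize each term separately. The scalar-curvature piece reproduces $DR[g](h)$ directly. The chain rule applied to $\tr_g\pi = g^{ij}\pi_{ij}$ gives $\tr_g\varpi - \pi^{ij}h_{ij}$, so the $\half(\tr_g\pi)^2$ piece contributes $\tr_g\pi\pth{\tr_g\varpi-\pi^{ij}h_{ij}}$. Linearizing $|\pi|_g^2 = g^{ik}g^{j\ell}\pi_{ij}\pi_{k\ell}$ using the variation of the inverse metric gives $2\pi^{ij}\varpi_{ij} - 2h^{ik}g^{j\ell}\pi_{ij}\pi_{k\ell}$. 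Summing the three contributions recovers \eqref{DH}.

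For $D\MM$, writing $(\MM(g,\pi))_i = g^{jk}D_j\pi_{ki}$ in coordinates, three pieces appear on linearization. Varying $g^{jk}$ contributes $-h^{k\ell}D_k\pi_{\ell i}$; varying $\pi$ contributes $(\div_g\varpi)_i$; and varying the Levi-Civita connection contributes $-g^{jk}\pth{D\Gamma^\ell_{jk}[h]\,\pi_{\ell i} + D\Gamma^\ell_{ji}[h]\,\pi_{k\ell}}$. Substituting the formula for $D\Gamma$, the first connection term collapses, after symmetrizing in $(j,k)$ against $g^{jk}$, to $-\pi_i^j g^{k\ell}\pth{D_k h_{\ell j} - \half D_j h_{k\ell}}$. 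For the second, the symmetries of $h$ and $\pi$ force two of the three pieces of $D\Gamma^\ell_{ji}$ to cancel against each other, leaving only $-\half\pi^{k\ell}D_i h_{k\ell}$. Collecting yields \eqref{DM}.

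For the remainders $Q_\HH$ and $Q_\MM$, I would Taylor-expand each ingredient to all orders and extract the schematic form. The inverse metric expands as $(g+h)^{-1} = g^{-1}\sum_{n\ge 0}(-hg^{-1})^n$, producing factors of shape $g^{n+1}h^n$; the Christoffel symbols of $g+h$ differ from those of $g$ by $g^{-1}\nab h$ at leading order with higher-order corrections of type $h^n\nab h$; and the Ricci and scalar curvature of $g+h$ produce schematically $\nab^2 h$ plus $(\nab h)^2$-type nonlinearities. Substituting these expansions into $R(g+h)$, $(\tr_{g+h}(\pi+\varpi))^2$, and $|\pi+\varpi|_{g+h}^2$ for the Hamiltonian, and into $\div_{g+h}(\pi+\varpi)$ for the momentum, and then subtracting the linear pieces computed above, every leftover monomial falls into one of the schematic templates of \eqref{QH}--\eqref{QM}. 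The main obstacle is purely organizational: one must verify that every contribution from the infinite series, including products like $h^2\nab g\nab h$ or $h^2\varpi\nab h$ coming from $(g+h)^{-1}$ acting on already-nonlinear pieces, fits into the prescribed list without omission or duplication. I would handle this by grouping contributions by their total $h$-degree and derivative order, and treating separately the top-derivative pieces $h\nab^2 h$ in $Q_\HH$ and $g^2 h\nab\varpi$ in $Q_\MM$, which are the only pieces carrying the maximal derivative count and come from the pure curvature and divergence parts respectively.
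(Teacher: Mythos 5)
Your argument is correct and is essentially the same as the paper's: the paper cites Fischer--Marsden \cite{fischerStructureSpaceSolutions1980} for \eqref{DH}--\eqref{DM} instead of re-deriving them, and obtains \eqref{QH}--\eqref{QM} by isolating the nonlinear terms in the schematic expansions $\HH(g,\pi)=g\nab^2 g+(g\nab g)^2+(g\pi)^2$ and $\MM(g,\pi)=g(\nab\pi+g\pi\nab g)$, which is precisely the Taylor-expansion bookkeeping you describe. Your direct variational computation of $D\HH$ and $D\MM$ is the standard calculation behind the cited reference, so no genuinely different route is taken.
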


\begin{proof}
The expression \eqref{DH} for $D\HH[g,\pi](h,\varpi)$ follows from the standard linearization of the scalar curvature
\begin{align*}
R(g+h)-R(g) = - \De_{g}\tr_{g}h  + D^i D^j h_{ij} - h^{ij} Ric(g)_{ij} + \text{higher order terms in $h$},
\end{align*}
which can be found in Lemma 2.2 of \cite{Corvino2006}, and the following direct computation
\begin{align*}
\half (\tr_{g+h}(\pi+\varpi))^2 - |\pi+\varpi|^2_{g+h} - \half (\tr_{g}\pi)^2 + |\pi|^2_{g} & =  \tr_g\pi \pth{\tr_g\varpi - h^{ij}\pi_{ij} } - 2 \varpi_{ij}\pi^{ij} +2 \pi_{ik}\pi_{j\ell}g^{ij}h^{k\ell} 
\\&\quad + \text{higher order terms in $h$ and $\varpi$},
\end{align*}
where we used that $(g+h)^{ij}-(g^{ij}-h^{ij})$ is quadratic in $h$. For the expression \eqref{DM} for $D\MM[g,\pi](h,\varpi)_i$, we have
\begin{align*}
\div_{g+h}(\pi+\varpi)_i - \div_{g}\pi_i  & =  (g+h)^{k\ell}D^{(h)}_k\varpi_{i\ell} + (g+h)^{k\ell} D^{(h)}_k\pi_{i\ell}  - g^{k\ell}D_k\pi_{i\ell}  ,
\end{align*}
where $D^{(h)}$ is the covariant derivative of $g+h$. For $T$ a generic symmetric 2-tensor we recall that
\begin{align*}
D^{(h)}_kT_{i\ell} - D_kT_{i\ell} & = \half \pth{ D_k h_{ic} + D_i h_{kc} - D_ch_{ik}} T^c_\ell + \half \pth{ D_k h_{\ell c} + D_\ell h_{kc} - D_ch_{\ell k}} T^c_i 
\\&\quad + \text{higher order terms in $h$}.
\end{align*}
Using this formula we obtain
\begin{align*}
\div_{g+h}&(\pi+\varpi)_i - \div_{g}\pi_i  
\\& = \div_{g}\varpi_i  -h^{k\ell} D^{(h)}_k\pi_{i\ell}  +  g^{k\ell} \pth{ D^{(h)}_k\pi_{i\ell}  - D_k\pi_{i\ell} } + \text{higher order terms in $h$ and $\varpi$}
\\& =  \div_{g}\varpi_i   -h^{k\ell} D_k\pi_{i\ell}  + \half  g^{k\ell} \pth{   \pth{ D_k h_{ic} + D_i h_{kc} - D_ch_{ik}} \pi^c_\ell +  \pth{ D_k h_{\ell c} + D_\ell h_{kc} - D_ch_{\ell k}} \pi^c_i  } 
\\&\quad + \text{higher order terms in $h$ and $\varpi$}
\\& =  \div_{g}\varpi_i   -h^{k\ell} D_k\pi_{i\ell}  + \half  \pth{ D_i h_{kc} \pi^{ck} +  \pth{ 2D_k h_{\ell c} - D_ch_{\ell k}}g^{k\ell}  \pi^c_i  } 
\\&\quad + \text{higher order terms in $h$ and $\varpi$},
\end{align*}
which matches \eqref{DM}. For \eqref{QH} and \eqref{QM}, we use the schematic expressions
\begin{align*}
\HH(g,\pi) & =  g\nab^2g + (g\nab g)^2 +  (g\pi)^2,
\\ \MM(g,\pi) & =  g\pth{\nab\pi + g \pi\nab g }.
\end{align*}
Isolating nonlinear terms in these expressions concludes the proof of the lemma.
\end{proof}

\begin{definition}
  The \emph{constraint operator} is defined from $H^2_{loc}(\Si)\times H^1_{loc}(\Si)$ to $L^2_{loc}(\Si)$ by
  \begin{equation}
    \label{eq:constraint-operator:def}
    \Phi(g,\pi)  \vcentcolon =\pth{ -\HH(g,\pi),2\MM(g,\pi)}.
  \end{equation}
  Its linearization around some given $(g,\pi)$ is denoted by
  $D\Phi[g,\pi]$ and is given by
  \begin{equation}
    \label{eq:linearized-constraint-operator:def}
    D\Phi[g,\pi] (h,\varpi) = \pth{ - D\HH[g,\pi](h,\varpi), 2 D\MM[g,\pi](h,\varpi)}.
  \end{equation}
\end{definition}

Its formal adjoint $D\Phi[g,\pi]^*$ for the $L^2(\Si)$ scalar product
induced by the metric $g$ will play a major role in our construction
since elements of its kernel will be obstructions to solvability. Looking at \eqref{DH}, \eqref{DM} and \eqref{eq:linearized-constraint-operator:def} we see that there exists tensors $A^{(i)}$ depending only on $(g,\pi)$ and their first and second derivatives such that
\begin{align*}
D\Phi[g,\pi] (h,\varpi) & = \pth{ \De_g\tr_g h - \div_g\div_g h + h^{k\ell}A^{(1)}_{k\ell} + \varpi^{k\ell}A^{(2)}_{k\ell}, 2\div_g\varpi_i + h^{k\ell}A^{(3)}_{ ik\ell} + D^a h^{k\ell} A^{(4)}_{ a ik\ell}}.
\end{align*}
By performing formal integration by parts over $\Si$ we then obtain
\begin{align*}
D\Phi[g,\pi]^*(f,X) & = \pth{ \Hess_g(f) - (\De_gf)g + f A^{(1)} - D^a (X^iA^{(4)}_{ ai}) + X^i A^{(3)}_{i}, - \LL_Xg + f A^{(2)} },
\end{align*}
where $\LL_Xg$ is the Lie derivative with respect to $X$. Therefore if $(f,X)$ belongs to $\ker(D\Phi[g,\pi]^*)$ then it solves the system
\begin{align}
\Hess_g(f)  & = B^{(1)}_{ ai} D^a X^i + f B^{(2)}  + X^i B^{(3)}_i , \label{KIDS 1}
\\ \LL_Xg & = f A^{(2)}, \label{KIDS 2}
\end{align}
where $B^{(1)} =  - A^{(1)}  + \half \tr_gA^{(1)} g$, $B^{(2)}=A^{(4)}_{ ai}-\half \tr_gA^{(4)}_{ ai} g$ and $B^{(3)}  = -A^{(3)}_{i} + \half \tr_g A^{(3)}_{i}g + D^a B^{(1)}_{ ai}  $. The exact expressions of $B^{(1)}$, $B^{(2)}$, $B^{(3)}$ and $A^{(2)}$ in terms of $(g,\pi)$ do not matter, and the system \eqref{KIDS 1}-\eqref{KIDS 2} satisfied by the KIDS will only be used in the proof of Lemma \ref{lem regularity kids}.



\section{Statement of the main results}
\label{section main result}

\subsection{The main theorem}

We now state the main result of this article, which handles any asymptotically flat solution of the constraint equations with analyticity near and at the boundary, see Remark \ref{remark more general}. We will then apply it to Kerr in Section \ref{section kerr}.

\begin{theorem}
  \label{theo general}
  Let $(\Si,\bar{g},\bar{\pi})$ be an asymptotically flat initial data
as defined in Definition \ref{def AF}. Assume that there exists $r_1>r_0$
  such that if $\Om\vcentcolon = \Si\cap\{r_0\leq r < r_1\}$,
  then the initial data set
  $\pth{ \Om,\bar{g}_{|_{\Om}},\bar{\pi}_{|_{\Om}} }$ is analytic\footnote{By this we mean analytic in $\{r_0<r<r_1\}$ and on the sphere $\{r=r_0\}$.}. Let $q\in\mathbb{N}^*$
  and $0<\de<1$. There exists $\e_0>0$ only depending on
  $(\Si,\bar{g},\bar{\pi})$, $q$ and $\de$ such that if
  $0<\e\leq \e_0$ then the following holds: for all
  $(\wc{g},\wc{\pi})$ such that
  \begin{align}\label{assumption check}
    \l \wc{g} \r_{H^2_{-q-\de}(\Si)} + \l \wc{\pi} \r_{H^1_{-q-\de-1}(\Si)} \leq \e, 
  \end{align}
  there exists a solution $(g,\pi)$ to the constraint equations
  \eqref{constraint equations 2} on $\Si$ of the form
  \begin{align*}
    g & = u^4 (\bar{g} + \wc{g} + \breve{g}),
    \\ \pi & = u^2(\bar{\pi} + \wc{\pi} + \breve{\pi} + L_{\bar{g} + \wc{g} + \breve{g}}X), 
  \end{align*}
  where
  \begin{equation}
    \label{eq:modified-Lie-derivative:def}
    L_{g}X\vcentcolon = \LL_X g - (\div_{g}X)g,
  \end{equation}
  the scalar function
  $u$ and the vector field $X$ satisfy
  \begin{align*}
    \l u-1 \r_{H^2_{-q-\de}(\Si)} + \l X \r_{H^2_{-q-\de}(\Si)} \leq C \e,
  \end{align*}
  and $(\breve{g},\breve{\pi})$ belong to a vector space of dimension $4q^2$ only
  depending on $(\Si,\bar{g},\bar{\pi})$ and composed of pairs of
  smooth symmetric 2-tensors compactly supported in
  $\Si\cap\{r_0 < r < r_1\}$ and moreover
  \begin{align*}
    \l \breve{g} \r_{W^{2,\infty}(\Si)} + \l \breve{\pi} \r_{W^{1,\infty}(\Si)} \leq C \e,
  \end{align*}
  where $C>0$ only depends on $(\Si,\bar{g},\bar{\pi})$.
\end{theorem}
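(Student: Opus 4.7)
The strategy is to follow the scheme outlined in the introduction: introduce $\tilde{u}\vcentcolon= u-1$ together with $X$, and treat $(\breve{g},\breve\pi)$ as an additional unknown constrained to lie in a $4q^2$-dimensional space $\mathcal{V}$ of smooth pairs compactly supported in $\Omega = \Sigma\cap\{r_0 < r < r_1\}$, to be constructed later. Inserting the conformal ansatz into \eqref{constraint equations 2} and using Lemma \ref{lem linearized constraint} together with the expansions \eqref{expansion H}--\eqref{expansion M}, the fact that $(\bar g,\bar\pi)$ itself satisfies \eqref{constraint equations 2} makes the zeroth order contribution cancel, and the problem reduces to an elliptic system of the form
\begin{align*}
  P(\tilde u,X) = D\Phi[\bar g,\bar\pi]\bigl(\wc g+\breve g,\,\wc\pi+\breve\pi\bigr) + \mathcal{N}(\tilde u,X;\wc g,\wc\pi,\breve g,\breve\pi),
\end{align*}
where $P$ is the elliptic operator depending only on $(\bar g,\bar\pi)$ studied in Section \ref{section elliptic}, and $\mathcal{N}$ collects all quadratic and higher order terms in the unknowns.

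Second, I would invoke the Fredholm analysis of Section \ref{section elliptic} for the pair $(P,B_\nu+F)$ acting on $H^2_{-q-\delta}(\Sigma)$, where $F$ is the boundary operator designed so that $F\equiv 0$ on an open set $U_0\subset\partial\Sigma$ and $F\equiv \mathrm{Id}$ on a disjoint open set $U_1\subset\partial\Sigma$. This should yield a Fredholm operator, injective by design, and with cokernel $\mathcal{K}^*\subset L^2_{q+\delta-2}(\Sigma)$ of dimension exactly $4q^2$ and whose elements are perturbations of harmonic polynomials at infinity. Consequently, for any right-hand side in $L^2(\Sigma)$-orthogonal to $\mathcal{K}^*$ there is a unique $(\tilde u,X)\in H^2_{-q-\delta}(\Sigma)$ solving the boundary value problem, continuously on the data.

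The main technical point is then the construction of $\mathcal{V}$. Fixing a basis $\{\mathbb{W}_1,\ldots,\mathbb{W}_{4q^2}\}$ of $\mathcal{K}^*$, I would seek $\mathcal{V}$ so that the linear map
\begin{align*}
  \mathcal{V}\ni(\breve g,\breve\pi)\;\longmapsto\;\Bigl(\ps{D\Phi[\bar g,\bar\pi](\breve g,\breve\pi)}{\mathbb{W}_j}_{L^2(\Sigma)}\Bigr)_{j=1,\ldots,4q^2}
\end{align*}
is an isomorphism onto $\RRR^{4q^2}$. Since $(\breve g,\breve\pi)$ is supported in $\Omega$, integrating by parts rewrites the pairing as $\ps{(\breve g,\breve\pi)}{D\Phi[\bar g,\bar\pi]^*\mathbb{W}_j}_{L^2(\Omega)}$, so surjectivity is equivalent to $\{D\Phi[\bar g,\bar\pi]^*\mathbb{W}_j|_\Omega\}$ being linearly independent. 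Suppose $\mathbb{W}=(f,X)\in\mathcal{K}^*$ satisfies $D\Phi[\bar g,\bar\pi]^*\mathbb{W}=0$ on $\Omega$. Then $\mathbb{W}$ is a KID on $\Omega$ in the sense of \eqref{KIDS 1}--\eqref{KIDS 2}; since $(\bar g,\bar\pi)$ is analytic on $\overline\Omega$, the overdetermined elliptic system for $(f,X)$ has analytic coefficients, so $\mathbb{W}$ is analytic up to and on $\partial\Sigma$. However $\mathbb{W}\in\mathcal{K}^*$ also satisfies the boundary condition dual to $(P,B_\nu+F)$: on $U_1$ this forces a Dirichlet-type vanishing of the traces, and on $U_0$ it leaves a Neumann-type trace free. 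Dirichlet vanishing on the open set $U_1$ together with analyticity of $\mathbb{W}$ forces $\mathbb{W}\equiv 0$ on an open neighborhood of $U_1$ in $\overline\Omega$, and then unique continuation for the overdetermined KID system gives $\mathbb{W}\equiv 0$ on $\Omega$. A dimension count and suitable smooth cutoffs then produce $4q^2$ independent compactly supported pairs $(\breve g_j,\breve\pi_j)$ spanning the desired $\mathcal{V}$.

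Finally, I would close the nonlinear problem by a Banach fixed point argument on the closed ball of radius $C\varepsilon$ in $H^2_{-q-\delta}(\Sigma)\times H^2_{-q-\delta}(\Sigma)\times\mathcal{V}$: given $(\tilde u,X,\breve g,\breve\pi)$ in this ball, one first selects $(\breve g',\breve\pi')\in\mathcal{V}$, uniquely by the previous step, so that the $4q^2$ orthogonality obstructions generated by $D\Phi[\bar g,\bar\pi](\wc g,\wc\pi)+\mathcal{N}$ are killed, and then inverts $(P,B_\nu+F)$ on the corrected right-hand side to obtain $(\tilde u',X')$. The embedding and multiplication properties of Lemma \ref{lem embedding} control $\mathcal{N}$ in $H^0_{-q-\delta-2}(\Sigma)$ by $C\varepsilon^2$, which makes the map a contraction once $\varepsilon$ is small enough; the fixed point then gives the solution with the announced estimates. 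I expect the main obstacle to be the third step: both the careful design of $F$ so that $(P,B_\nu+F)$ remains Fredholm and injective on $H^2_{-q-\delta}(\Sigma)$, and the analyticity-based unique continuation at $\partial\Sigma$ eliminating KID obstructions, with the subset $U_0$ providing the crucial flexibility needed to not over-constrain the elliptic problem.
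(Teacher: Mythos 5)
Your overall strategy tracks the paper's proof closely: reduce via the conformal ansatz to an elliptic system $P(\wc u,\wc X) = D\Phi[\bar g,\bar\pi]((\wc g,\wc\pi)+\p) + \mathcal{N}$, invoke the Fredholm theory of Section \ref{section elliptic} for $(P,B_\nu+F_\de)$ with $F_\de=0$ on one boundary open set and $F_\de=\mathrm{Id}$ on a disjoint one, kill the $4q^2$ cokernel obstructions by choosing the corrector $(\breve g,\breve\pi)$ from a space on which $D\Phi^*$ acts injectively, and close with a fixed point. Two steps, however, are not correct as stated.

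First, the claim that analyticity of $(\bar g,\bar\pi)$ on $\overline\Om$ \emph{immediately} gives analyticity of a KID $(f,X)$ up to and \emph{on} $\partial\Si$ is a genuine gap. Interior elliptic regularity only gives analyticity in $\{r_0<r<r_1\}$; the KID system carries no boundary conditions, so boundary analyticity requires a separate argument. The paper handles this in Lemma \ref{lem regularity kids} by deriving from \eqref{KIDS 1}--\eqref{KIDS 2} an intrinsic elliptic system on $\partial\Si$ for the tangential and normal components of $(f,X)$ and their normal derivatives, and then applying tangential elliptic regularity. Without something of this kind, the "analyticity of $\mathbb W$" you invoke at the boundary is unjustified. (The same issue affects the analyticity of the $\mathbb W_{j,\ell,\a}$ themselves at $\{r=r_0\}$; the paper gets this from $P^*\mathbb W_{j,\ell,\a}=0$ together with the boundary-analyticity argument.)

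Second, the step from the dual boundary condition to $\mathbb W\equiv 0$ is garbled. On $U_1$ where $F=\mathrm{Id}$, the dual boundary condition reads $(B_\nu^*+\mathrm{Id})\mathbb W=0$, i.e. a Robin condition $B_\nu^*\mathbb W+\mathbb W=0$, \emph{not} Dirichlet vanishing. Moreover, even if one had $\mathbb W|_{U_1}=0$, Dirichlet vanishing on a boundary hypersurface together with analyticity does \emph{not} force $\mathbb W\equiv 0$ in an open interior neighborhood (consider $r-r_0$); you need the full Cauchy data (Dirichlet and Neumann) to vanish before applying unique continuation. The correct chain, as in Lemma \ref{lem KIDS}, uses both pieces of $\partial\Sigma$ in a specific order: from the KID property on $\Om$ and the boundary analyticity you first read off $B_\nu^*\mathbb W=0$ on the set where $F=0$, propagate this by analyticity to all of $\partial\Si$, then feed it back into the full boundary condition to conclude $F_\de^*\mathbb W=0$ on $\partial\Si$, whence $\mathbb W=0$ on the set where $F=\mathrm{Id}$, again propagated to all of $\partial\Si$. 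Only then do you have vanishing Cauchy data for $P^*$ on $\partial\Si$ and can apply the unique continuation result (Lemma \ref{lem unique continuation}) to conclude $\mathbb W\equiv 0$ on $\Si$. Your statement that "$U_0$ provides the crucial flexibility" and that the condition "leaves a Neumann-type trace free" suggests you have the roles of the two sets, and the nature of the boundary conditions they induce, partly reversed.

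The remaining ingredients — the estimate of the nonlinear remainder via Lemma \ref{lem embedding}, the isomorphism from the corrector space onto $\RRR^{4q^2}$, and the contraction mapping — are essentially the paper's argument; packaging the two fixed points (corrector map and solution map) into a single one on the product space is a harmless variant.
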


Theorem \ref{theo general} will be proved in Section \ref{section proof main result} relying on the properties of the linearized operator exhibited in Section \ref{section elliptic}.

\subsection{Spacelike initial data for Kerr stability}\label{section kerr}

In this section we present the main application of Theorem \ref{theo
  general}, i.e. the construction of initial data for the Kerr
stability conjecture. Let $m$ and $a$ be two real numbers representing
the mass and angular momentum of the Kerr black hole respectively. Let
$\mathcal{M}_{m,a}:= \RRR\times (r_+,+\infty)\times (0,\pi) \times
\mathbb{S}^1$ with $r_+= m + \sqrt{m^2-a^2}$. Then, the Kerr metric in
Boyer-Lindquist coordinates $(t,r,\th,\phi)$ is given by
\begin{equation}
  \label{eq:Kerr:Boyer-Lindquist}
  \g_{m,a}  = - \frac{|q|^2\De}{\Si^2} \d t^2 + \frac{\Si^2\sin^2\th}{|q|^2}\pth{\d\phi - \frac{2amr}{\Si^2}\d t}^2 + \frac{|q|^2}{\De}\d r^2 + |q|^2 \d\th^2,
\end{equation}
where
\begin{align*}
  \De & \vcentcolon = r^2 + a^2 - 2mr,\quad q  \vcentcolon = r + i a \cos\th,\quad \Si^2  \vcentcolon = (r^2+a^2)^2 - a^2\sin^2\th \De,
\end{align*}
which is well-defined for\footnote{As usual, the coordinate singularities
  at the north and south poles of the spheres can be resolved by considering for example
  $(x_{\pm},y_{\pm}) = (\sin\theta\cos\phi, \sin\theta\cos\phi)$ as coordinates on neighborhoods of the north and south pole respectively.} $(t,r,\th,\phi)\in \mathcal{M}_{m,a}$.

It is well-known that the singularity of the Kerr metric at the event
horizon $\{r=r_+\}$ is a coordinate singularity, and that the Kerr metric can in fact be
smoothly extended past the event horizon as a Lorentzian metric. For instance, the ingoing Eddington-Finkelstein coordinates $(u_+,r,\th,\phi_+)$ where $\d u_+=\d t + \frac{r^2+a^2}{\De}\d r$ and $\d\phi_+= \d\phi + \frac{a}{\De}\d r$ are such that
\begin{align*}
\g_{m,a} & = - \pth{ 1 - \frac{2mr}{|q|^2}}\d u_+^2 + 2 \d r \d u_+ - \frac{4amr \sin^2\th}{|q|^2} \d u_+ \d \phi_+ - 2a\sin^2\th \d r \d \phi_+  + |q|^2\d\th^2 + \frac{\Si^2}{|q|^2}\sin^2\th \d\phi_+^2,
\end{align*}
see (5.31) in \cite{Hawking1973}. In the next lemma, we define other coordinates $(\tau,r,\th,\ffi)$, also entering the black hole interior, and such that the level surfaces of $\tau$ are spacelike.

\begin{lemma}
  \label{lem kerr}
  There exist coordinates $\tau$ and $\ffi$ such that
  \begin{enumerate}[label=(\roman*)]
  \item in the coordinates $(\tau,r,\th,\ffi)$ the Kerr metric
    $\g_{m,a}$ is smooth on
    \begin{align*}
      \RRR\times [r_+(1-\de_*),+\infty) \times  (0,\pi) \times \mathbb{S}^1 
    \end{align*} 
    and analytic in particular on
    \begin{align*}
      \RRR\times [r_+(1-\de_*),3m) \times (0,\pi) \times \mathbb{S}^1
    \end{align*} 
    for any $0<\de_*<1$,
  \item the hypersurface
    \begin{align*}
      \Si_{m,a}\vcentcolon = \{\tau=0\}\times [r_+(1-\de_*),+\infty) \times (0,\pi) \times \mathbb{S}^1
    \end{align*}
    is uniformly spacelike.
  \end{enumerate}
\end{lemma}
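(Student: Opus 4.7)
The plan is to define $\tau$ and $\ffi$ as modifications of the Boyer--Lindquist coordinates $t$ and $\phi$ by $r$-dependent shifts that absorb the $\De^{-1}$ coordinate singularity of the Kerr metric at the event horizon, together with a small additional tilt keeping $\{\tau=0\}$ strictly spacelike rather than null. On the BL chart $\{r>r_+\}$ I would set
\begin{equation*}
  \tau = t + T(r), \qquad \ffi = \phi + \Phi(r),
\end{equation*}
with
\begin{equation*}
  T'(r) = \chi(r)\left(\frac{r^2+a^2}{\De(r)} - h'(r)\right), \qquad \Phi'(r) = \chi(r)\,\frac{a}{\De(r)},
\end{equation*}
where $\chi\in C^\infty([r_+(1-\de_*),+\infty))$ is a cutoff equal to $1$ on $[r_+(1-\de_*),3m]$ and vanishing for $r\geq R$ (some $R>3m$), and $h$ is real analytic on $[r_+(1-\de_*),3m]$ with $h'(r_+)>a^2/(2(r_+^2+a^2))$; the simplest choice is $h'\equiv 1$. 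Although $T,\Phi$ blow up as $r\to r_+^+$, the new coordinates $(\tau,r,\th,\ffi)$ extend smoothly across $r=r_+$ via the cancellations described next.

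Substituting $dt=d\tau-T'(r)dr$ and $d\phi=d\ffi-\Phi'(r)dr$ into \eqref{eq:Kerr:Boyer-Lindquist}, a direct computation shows that on $\{\chi=1\}$ all $\De^{-1}$ poles cancel. The key identity is
\begin{equation*}
  \Si^2 - 2mr(r^2+a^2) = |q|^2 \De,
\end{equation*}
which reduces $\Phi'-\frac{2amr}{\Si^2}T'$ to the smooth quantity $\frac{a|q|^2}{\Si^2}+\frac{2amr}{\Si^2}h'$. All resulting metric coefficients are rational functions of $r$ and of $\cos\th,\sin\th$ with denominators dividing $|q|^2\Si^2$, hence analytic on $[r_+(1-\de_*),3m]\times(0,\pi)\times\mathbb{S}^1$. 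Outside $\{\chi=1\}$ the metric is either the original BL metric (smooth for $r>r_+$) or its smooth interpolant via $\chi$, so the extended metric is smooth on the region asserted in (i), proving (i) and the analyticity portion simultaneously.

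For (ii), since $g_{\text{BL}}^{tr}=0$, in $\{\chi=1\}$ I compute
\begin{equation*}
  \g_{m,a}^{-1}(d\tau, d\tau) = \frac{1}{|q|^2}\Bigl[a^2\sin^2\th - 2(r^2+a^2)h'(r) + \De\, h'(r)^2\Bigr].
\end{equation*}
In the subextremal regime, $a^2<r_+^2+a^2$, so the choice $h'\equiv 1$ makes this strictly negative at $r=r_+$ and, by continuity, uniformly so in a neighborhood of the horizon. Outside the cutoff, $\{\tau=0\}$ is a standard BL slice $\{t=c\}$, which is spacelike for $r>r_+$ since $g_{\text{BL}}^{tt}=-\Si^2/(|q|^2\De)<0$; compactness of the transition zone yields uniform spacelikeness on all of $\Si_{m,a}$.

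The hard part is reconciling three simultaneous demands: (a) exact cancellation of the $\De^{-1}$ singularity to obtain smoothness across the horizon, which fixes the leading parts of $T',\Phi'$; (b) analyticity of the resulting coefficients on $[r_+(1-\de_*),3m]$, which is automatic from rational arithmetic together with the analyticity of $h$; and (c) uniform timelike character of $d\tau$ down to the boundary, which requires the subleading tilt $-h'(r)$ in $T'$ because the pure Kerr--star choice $h'\equiv 0$ would render $\{\tau=0\}$ null at $r=r_+$. All three are handled by the single explicit ansatz above, reducing the proof to an extended but elementary computation.
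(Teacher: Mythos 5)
Your approach is correct and is a genuinely different construction from the paper's. The paper works with the canonical ingoing principal null frame and chooses
\begin{equation*}
  f'(r) = \chi(r)\,\frac{r^2+a^2}{\De}\sqrt{1-\frac{2m^2\De}{(r^2+a^2)^2}},\qquad h'(r)=\frac{a}{\De},
\end{equation*}
which leads, on $\{\chi=1\}$, to $\g_{m,a}^{\tau\tau}=(a^2\sin^2\theta-2m^2)/|q|^2$. You instead use the Eddington--Finkelstein-type shift $T'(r)=\chi(r)\bigl(\tfrac{r^2+a^2}{\De}-1\bigr)=\chi(r)\,\tfrac{2mr}{\De}$ and $\Phi'(r)=\chi(r)\tfrac{a}{\De}$, which on $\{\chi=1\}$ yields the remarkably clean $\g_{m,a}^{\tau\tau}=-1-\tfrac{2mr}{|q|^2}$, manifestly bounded away from zero on any set $\{r\ge r_+(1-\de_*)\}$. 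Both choices cancel the $\De^{-1}$ poles (your identity $\Si^2-2mr(r^2+a^2)=|q|^2\De$ is correct, and so is the computation $\Phi'-\tfrac{2amr}{\Si^2}T'=\tfrac{a|q|^2+2amrh'}{\Si^2}$), and both produce coefficients that are rational in $r,\cos\theta,\sin\theta$ with denominators $|q|^2$ and $\Si^2$, both bounded away from zero on the relevant domain, hence analytic for $r<3m$. Your construction avoids the square root entirely and gives an even simpler expression for $\g^{\tau\tau}$; the paper's choice is the natural ``Kerr-star-like'' bisector of $e_3,e_4$ and fits its frame-based computation. As a matter of methodology they are of comparable length, and either one establishes the lemma.

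There is one small gap worth fixing. You verify $\g^{\tau\tau}<0$ in $\{\chi=1\}$ (near the horizon) and in $\{\chi=0\}$ (far out), and then say ``compactness of the transition zone yields uniform spacelikeness.'' Compactness gives you \emph{uniformity} once pointwise spacelikeness is established, but it does not by itself give pointwise spacelikeness for $\chi\in(0,1)$. That step does hold, but it requires a short argument: writing $P=(r^2+a^2)^2/\De$, $Q=a^2\sin^2\theta$, $S=(r^2+a^2)+2mr$ one has on $\{\chi\in(0,1]\}$
\begin{equation*}
  |q|^2\,\g^{\tau\tau} = \chi^2(P-S)+Q-P,
\end{equation*}
and since $P>Q$ (this is $\Si^2>0$) and $Q<S$, one checks $\chi^2(P-S)<P-Q$ whether $P\ge S$ or $P<S$. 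Add a sentence to that effect (or, more crudely, observe that with $h'\equiv 1$ your explicit $|q|^2\g^{\tau\tau}=\tfrac{(\chi^2-1)(r^2+a^2)^2}{\De}+a^2\sin^2\theta-\chi^2(r^2+a^2)-2\chi^2 mr$ is negative because the first term is $\le 0$ for $\De>0$ and the remaining terms sum to $<0$ when $\chi=0$, and interpolate monotonically). With that, the proposal is complete.
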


\begin{proof}
We recall the expression of the ingoing principal null frame\footnote{Recall that a null frame satisfies $\g_{m,a}(e_3,e_3)=\g_{m,a}(e_4,e_4)=0$, $\g_{m,a}(e_3,e_4)=-2$ and $(e_1,e_2)$ is an orthonormal basis of $\{e_3,e_4\}^\perp$. The property of the principal null frame of Kerr is that it diagonalizes the curvature tensor. Moreover, the ingoing normalization is such that $e_3$ is geodesic.} for the metric $\g_{m,a}$, see for example Lemma 2.26 in \cite{Klainerman2023}:
  \begin{equation}\label{null frame}
    \begin{aligned}
      e_3 & \vcentcolon = \frac{r^2+a^2}{\De}\dr_t - \dr_r + \frac{a}{\De}\dr_\phi,
      & e_4 & \vcentcolon = \frac{r^2+a^2}{|q|^2}\dr_t + \frac{\De}{|q|^2} \dr_r + \frac{a}{|q|^2} \dr_\phi,
      \\ e_1 & \vcentcolon = \frac{1}{|q|}\dr_\th,
      & e_2 & \vcentcolon = \frac{a\sin\th}{|q|}\dr_t + \frac{1}{|q|\sin\th}\dr_\phi.
    \end{aligned}
  \end{equation}
  The new coordinates $\tau$ and $\ffi$ are defined in terms of the Boyer-Lindquist coordinates by
  \begin{equation}\label{new coord}
    \tau  = t + f(r), \quad \ffi  = \phi + h(r),
  \end{equation}
  where 
  \begin{align*}
    h'(r) & = \frac{a}{\De}, \quad f'(r) = \chi(r) \frac{r^2+a^2}{\De}\sqrt{1-\frac{2m^2\De}{(r^2+a^2)^2}},
  \end{align*}
  with $f(4m)=0$ and where $0\leq\chi\leq 1$ is smooth and such that
  $\chi=1$ for $r<3m$ and $\chi=0$ for $r>4m$. From \eqref{null frame}
  and \eqref{new coord} we first obtain
  \begin{align*}
    & e_3(r)=-1,\quad e_4(r)=\frac{\De}{|q|^2}, \quad e_1(r)=e_2(r)=0,
    \\& e_1(\th)=\frac{1}{|q|}, \quad e_3(\th)=e_4(\th)=e_2(\th) = 0 ,
    \\& e_4(\ffi)= \frac{2a}{|q|^2}, \quad  e_2(\ffi)=\frac{1}{|q|\sin\th}, \quad e_3(\ffi)=e_1(\ffi)= 0,
    \\ & e_3(\tau)  = \frac{r^2+a^2}{\De}\pth{1 - \chi(r) \sqrt{1-\frac{2m^2\De}{(r^2+a^2)^2}}}, \quad e_4(\tau) = \frac{r^2+a^2}{|q|^2}\pth{1 + \chi(r) \sqrt{1-\frac{2m^2\De}{(r^2+a^2)^2}}}, 
    \\ & e_2(\tau) = \frac{a\sin\th}{|q|}, \quad e_1(\tau)  = 0.
  \end{align*}
  Together with the formula for the inverse metric in a coordinate system $(x^\a)_{\a=0,1,2,3}$
  \begin{align*}
    \g^{\a\b}_{m,a} & = - \half e_3(x^\a) e_4(x^\b) - \half e_3(x^\b) e_4(x^\a) + e_1(x^\a)e_1(x^\b) + e_2(x^\a)e_2(x^\b)
  \end{align*}

  from Lemma 4.3 in \cite{Klainerman2023} (which is in fact valid for any Lorentzian metric expressed in a null frame), this
  allows us to obtain the expression of the inverse Kerr metric in the
  new coordinates system $(\tau,r,\th,\ffi)$:
  \begin{align*}
    \g_{m,a}^{rr} & =  \frac{\De}{|q|^2}, \quad \g_{m,a}^{r\ffi}  =  \frac{a}{|q|^2} , \quad \g_{m,a}^{\ffi\ffi}  = \frac{1}{|q|^2\sin^2\th}, \quad \g_{m,a}^{\th\th}  =    \frac{1}{|q|^2},
    \\ \g_{m,a}^{\tau\tau} & = \frac{1}{|q|^2}\pth{ (\chi^2-1) \frac{(r^2+a^2)^2}{\De} - 2m^2\chi^2 + a^2\sin^2\th  },
    \\ \g_{m,a}^{\tau r} & =  \frac{r^2+a^2}{|q|^2} \chi \sqrt{1-\frac{2m^2\De}{(r^2+a^2)^2}} ,
    \\ \g_{m,a}^{\tau\ffi} & =  \frac{a}{|q|^2}\pth{1  + \frac{r^2+a^2}{\De}\pth{ \chi  \sqrt{1-\frac{2m^2\De}{(r^2+a^2)^2}} -1 }},
    \\ \g_{m,a}^{r\th} & = \g_{m,a}^{\th\ffi} =  \g_{m,a}^{\tau\th} =0 .
  \end{align*}
  We see on the above formulas that the inverse Kerr metric is smooth
  on the manifold
  $\RRR\times [r_+(1-\de_*),+\infty) \times (0,\pi) \times
  \mathbb{S}^1$. Moreover, if $r<3m$ then $\chi=1$ and the above
  formulas show that the inverse Kerr metric is actually
  analytic, which shows the first part of the lemma. We also easily check that
  there exists a constant $c_{m,a}>0$ only depending on $m$ and $a$
  such that $\g_{m,a}^{\tau\tau}\leq -c_{m,a}$ which shows the second
  part of the lemma.
\end{proof}

Lemma \ref{lem kerr} implies that the initial data set
$(\Si_{m,a},g_{m,a},\pi_{m,a})$ satisfies the assumptions of Theorem
\ref{theo general} and we immediately deduce the following statement (which is the
precise version of Theorem \ref{theo rough}).

\begin{theorem}
  \label{coro kerr}
  Let $0\leq |a|\le m$ and $(\Si_{m,a},g_{m,a},\pi_{m,a})$ be the
  hypersurface and initial data set constructed in Lemma \ref{lem
    kerr}. Let $q\in \mathbb{N}^*$ and $0<\de<1$. There exists
  $\e_0=\e_0(m,a,q,\de)>0$ such that if $0<\e\leq\e_0$ then the
  following holds: for all $(\wc{g},\wc{\pi})$ with
  \begin{align*}
    \l \wc{g}\r_{H^2_{-q-\de}(\Si_{m,a})} + \l \wc{\pi} \r_{H^1_{-q-\de}(\Si_{m,a})} \leq \e,
  \end{align*}
  there exists a solution $(g,\pi)$ of the constraint equations \eqref{constraint equations 2} on $\Si_{m,a}$ of the form
  \begin{align*}
    g & = u^4\pth{ g_{m,a} + \wc{g} + \breve{g}},
    \\ \pi & = u^2 \pth{ \pi_{m,a} + \wc{\pi} + \breve{\pi} + L_{g_{m,a}+ \wc{g} + \breve{g} }X},
  \end{align*}
  where the correctors $(\breve{g},\breve{\pi})$ belong to a vector space of dimension $4q^2$ only
  depending on $(m,a)$ and composed of pairs of
  smooth symmetric 2-tensors compactly supported and where $u$, $X$ and $(\breve{g},\breve{\pi})$ satisfy
  \begin{align*}
    \l u-1\r_{H^2_{-q-\de}(\Si_{m,a})} + \l X \r_{H^2_{-q-\de}(\Si_{m,a})} + \l \breve{g} \r_{W^{2,\infty}(\Si_{m,a})} + \l \breve{\pi} \r_{W^{1,\infty}(\Si_{m,a})} \lesssim \e.
  \end{align*}
\end{theorem}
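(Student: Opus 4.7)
The plan is to deduce Theorem \ref{coro kerr} as a direct corollary of the general result Theorem \ref{theo general}, applied to the initial data set $(\Si_{m,a}, g_{m,a}, \pi_{m,a})$ produced by Lemma \ref{lem kerr}. Three hypotheses must be checked: (i) asymptotic flatness of $(\Si_{m,a}, g_{m,a}, \pi_{m,a})$ in the sense of Definition \ref{def AF}, with $r_0 := r_+(1-\de_*)$; (ii) that $(g_{m,a},\pi_{m,a})$ solves the constraint equations \eqref{constraint equations 2}; and (iii) analyticity of the induced data on $\Om := \Si_{m,a}\cap\{r_0 \leq r < r_1\}$ for some $r_1>r_0$.

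Point (ii) is automatic: since $\Si_{m,a}$ is a spacelike hypersurface in the Ricci-flat spacetime $(\mathcal{M}_{m,a}, \g_{m,a})$, the Gauss-Codazzi identities yield that the first and second fundamental forms $(g_{m,a}, k_{m,a})$ solve \eqref{constraint equations}, which is equivalent to $(g_{m,a}, \pi_{m,a})$ solving \eqref{constraint equations 2}. Point (i) is classical: I would introduce quasi-Cartesian coordinates $(x^1,x^2,x^3)$ on $\Si_{m,a}$ for which $r = \sqrt{(x^1)^2 + (x^2)^2 + (x^3)^2}$ coincides with the Boyer-Lindquist radial coordinate, using the standard spherical-to-Cartesian correspondence together with an atlas near the axis as in the footnote of Lemma \ref{lem kerr}. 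The familiar asymptotic expansion $g_{m,a} = \de + O(m/r)$ and $\pi_{m,a} = O(m/r^{2})$ of the Kerr data at spatial infinity, together with the corresponding derivative decay, then yields the bounds required in Definition \ref{def AF}.

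Point (iii) is the only substantive verification, and it follows essentially directly from Lemma \ref{lem kerr}. Taking $r_1 := 3m$, Lemma \ref{lem kerr}(i) asserts that $\g_{m,a}$ is analytic in the coordinates $(\tau, r, \th, \ffi)$ on $\RRR\times[r_0, 3m)\times(0,\pi)\times\mathbb{S}^1$, which crucially includes the left endpoint $r=r_0$. Lemma \ref{lem kerr}(ii) provides that $\Si_{m,a} = \{\tau=0\}$ is uniformly spacelike, so its future-directed unit normal is an analytic algebraic function of $\g_{m,a}$ and $\mathrm{d}\tau$; hence the induced metric $g_{m,a}$, the second fundamental form $k_{m,a}$, and therefore also $\pi_{m,a} = k_{m,a} - (\tr_{g_{m,a}} k_{m,a})\, g_{m,a}$ are analytic on $\Om$, including on the boundary sphere $\{r=r_0\}$, which is exactly the form of analyticity required in Theorem \ref{theo general} and footnoted in its statement.

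With (i), (ii) and (iii) verified, applying Theorem \ref{theo general} to $(\Si_{m,a}, g_{m,a}, \pi_{m,a})$ immediately yields Theorem \ref{coro kerr}: the input bound on $(\wc g, \wc\pi)$ and the output structure for $(g,\pi)$, $u$, $X$ and the $4q^2$-dimensional corrector space $(\breve g, \breve\pi)$ are transcribed verbatim, with the threshold $\e_0$ and the implicit constant in the output estimate, which a priori depend on $(\Si,\bar g,\bar\pi)$, reducing to a dependence only on $(m,a,q,\de)$ for the Kerr specialization. I do not anticipate any further obstacle, since Theorem \ref{theo general} absorbs all of the analytical work; the only mildly delicate point is confirming analyticity up to $\{r=r_0\}$, which Lemma \ref{lem kerr} was precisely designed to provide.
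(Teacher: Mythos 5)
Your proposal is correct and takes essentially the same route as the paper: Theorem \ref{coro kerr} is deduced as an immediate corollary of Theorem \ref{theo general} once Lemma \ref{lem kerr} has supplied the spacelike hypersurface $\Si_{m,a}$ together with the analyticity of the induced Kerr data near and at the boundary, with asymptotic flatness and the validity of the constraints being standard facts about Kerr initial data. The paper states this in a single sentence; your more detailed verification of points (i)--(iii) is a faithful unpacking of the same argument.
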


\begin{remark}
We remark that \Cref{coro kerr} is true even in the extremal case where $|a|=m$. Despite the horizon linear instabilities (see \cite{Aretakis2015,Gajic2023}) displayed by extremal black holes, the codimension 1 nonlinear stability of extremal Kerr is conjectured to be true in \cite{Dafermos2024} (see the recent \cite{Angelopoulos2024} for a proof of the analogue result for Reissner-Nordström in spherical symmetry). Our result thus provides spacelike initial data for a potential future proof of this conjecture.   
\end{remark}

\section{Properties of the linearized operator}
\label{section elliptic}

Our first task is to develop a spectral theoretic framework for the following
elliptic operator on $\Si$ acting on pairs $\mathbb{U}=(u,X)$
composed of a scalar function $u$ and a vector field $X$:
\begin{equation}
  \label{def P}
  \begin{aligned}
    P(\mathbb{U}) & \vcentcolon = \pth{-8\De_{\bar{g}} u   - 2\bar{\pi}^{ij} \LL_{X} \bar{g}_{ij} + \tr_{\bar{g}}\bar{\pi} \div_{\bar{g}}X,
                    -2\div_{\bar{g}}L_{\bar{g}}X_i  - 8  \bar{g}^{j\ell} \bar{\pi}_{i\ell} \dr_j u   + 4   \tr_{\bar{g}}\bar{\pi}  \dr_i u }.
  \end{aligned}
\end{equation}
This operator will naturally appear after we formulate the constraint
equations following the conformal ansatz proposed in
\cite{Corvino2006}, see Section \ref{section
  conformal formulation}. For any $\mathbb{U}=(u,X)$, we will also
define\footnote{Directly integrating by parts $\div_{\bar{g}}L_{\bar{g}}X_i$ would lead to the boundary term $L_{\bar{g}}X(\nu,\cdot)$. Here we instead first expand $\div_{\bar{g}}L_{\bar{g}}X_i=\De^{(1)}_{\bar{g}}X_i + Ric(\bar{g})(\dr_i,X)$ and then integrate by parts to obtain the boundary term $D_\nu X$.}
\begin{align*}
  P^*(\mathbb{U}) & \vcentcolon= \pth{ -8\De_{\bar{g}} u + 4 \bar{\pi}^{ij} \LL_X\bar{g}_{ij} - 4 \div_{\bar{g}}(\tr_{\bar{g}}\bar{\pi} X) , - 2\div_{\bar{g}}L_{\bar{g}}X_i + 4  \bar{\pi}_{i}^j \dr_j u - \dr_i( \tr_{\bar{g}}\bar{\pi} u) },
  \\ B_\nu(\mathbb{U}) & \vcentcolon= \pth{ - 8 \dr_\nu u  + \tr_{\bar{g}}\bar{\pi} \bar{g}(X,\nu) - 4 \bar{\pi}(X,\nu) , -2 D_\nu X } ,
  \\ B_\nu^*(\mathbb{U}) & \vcentcolon=  \pth{ - 8 \dr_\nu u  -4 \tr_{\bar{g}}\bar{\pi} \bar{g}(X,\nu) +8 \bar{\pi}(X,\nu) , -2 D_\nu X } ,
\end{align*}
with $D$ being the covariant derivative associated with $\bar{g}$ so that, for $\mathbb{U}=(u,X)$ and $\mathbb{V}=(v,Y)$ sufficiently regular, we have
\begin{align}\label{IPP}
  \int_{\Si} \pth{ P(\mathbb{U})\cdot \mathbb{V} - \mathbb{U}\cdot P^*(\mathbb{V}) } + \int_{\dr\Si}\pth{ B_\nu(\mathbb{U})\cdot \mathbb{V} - \mathbb{U}\cdot B_\nu^*(\mathbb{V}) } = 0,
\end{align}
where $\mathbb{U}\cdot \mathbb{V}=uv + \bar{g}(X,Y)$ and $\nu$ denotes the inward pointing
unit normal to $\dr\Si$ with respect to $\bar{g}$ and the integration is made with respect to the volume form of $\bar{g}$.

We will consider perturbations of the boundary operators $B_\nu$ and $B_\nu^*$. For that, we rely on the following intrinsic definition.
\begin{definition}
We introduce the following:
\begin{itemize}
\item[(i)] For $b$ a scalar function, $\om$ a 1-form, $Z$ a vector field and $\xi$ a $(1,1)$-tensor, we define $F=(b,\om,Z,\xi)$ so that $F$ acts on the space of pairs $\mathbb{U}=(u,X)$ of functions and vector fields through the formula
\begin{align*}
    F(\mathbb{U})=\pth{b u + \om(X), u Z + \xi(X)}.
\end{align*}
We define the space of such smooth $F$ restricted to $\dr\Si$ by $\EE(\dr\Si)$.
\item[(ii)] If $F\in \EE(\dr\Si)$ we define its transpose $F^*\in \EE(\dr\Si)$ by the formula $F^*=\pth{b,Z^\flat,\om^\sharp,\xi^\natural}$, where $\xi^\natural$ is defined in coordinates by $(\xi^\natural)_i^{\;\;j}=\bar{g}^{j\ell} \xi_{\ell}^{\;\;k} \bar{g}_{ki}$. This definition is such that 
\begin{align}\label{duality}
F(\mathbb{U})\cdot \mathbb{V} = \mathbb{U} \cdot F^*(\mathbb{V}).
\end{align}
\item[(iii)] We define $\mathrm{Id}\in \EE(\dr\Si)$ by $\mathrm{Id}=(1,0,0,\de)$ so that $\mathrm{Id}(\mathbb{U})=\mathbb{U}$.
\end{itemize}
\end{definition}

For $F=(b,\om,Z,\xi)\in\EE(\dr\Si)$ we use the notation $\l F\r_{\mathcal{X}} \vcentcolon =\l b\r_{\mathcal{X}}+\l \om\r_{\mathcal{X}}+\l Z\r_{\mathcal{X}}+\l \xi\r_{\mathcal{X}}$ where $\mathcal{X}$ is any relevant function space defined on $\dr\Si$. We are now ready to define properly our elliptic boundary operators with precise domains.

\begin{definition}
  For $F\in \EE(\dr\Si)$ and $\de\in\RRR$ we define 
  \begin{align}
    P_{F,\de} & \vcentcolon = \pth{ (P,B_\nu + F ): H^2_\de(\Sigma)\longrightarrow L^2_{\de-2}(\Sigma)\times H^{\half}(\dr\Si) }, \label{eq:P-f-delta:def}              
    \\
    P^*_{F,\de} & \vcentcolon = \pth{ (P^*,B^*_\nu + F^* ): H^2_\de(\Sigma)\longrightarrow L^2_{\de-2}(\Sigma)\times H^{\half}(\dr\Si) }. \label{eq:P-f-delta-adjoint:def}
  \end{align}  
\end{definition}

Beware that the operator $P^*_{F,\de}$ defined above is not the adjoint of $P_{F,\de}$. See the discussion before Lemma \ref{lem weird}.

\subsection{Basic properties}

We gather in the following proposition general facts about the
operators \eqref{eq:P-f-delta:def} and
\eqref{eq:P-f-delta-adjoint:def}, which follow from $P_{F,\de}$ and
$P^*_{F,\de}$ being compact perturbations of the Laplace-Beltrami
operator (acting on both scalar functions and vector fields) with
Neumann boundary conditions.

\begin{proposition}\label{prop fredholm}
Let $F\in \EE(\dr\Si)$.
\begin{itemize}
\item[(i)] If $\de\notin\mathbb{Z}$, then there exists $S_\de\subset \Si$ bounded such that for all $\mathbb{U}\in H^2_\de(\Si)$ we have
\begin{equation}\label{estimation elliptique}
\begin{aligned}
\l \mathbb{U} \r_{H^2_\de(\Si)} & \lesssim \l P(\mathbb{U}) \r_{L^2_{\de-2}(\Si)} + \l (B_\nu +F)(\mathbb{U})\r_{H^\half(\dr\Si)} + \l \mathbb{U} \r_{L^2(S_\de)},
\\ \l \mathbb{U} \r_{H^2_\de(\Si)} & \lesssim \l P^*(\mathbb{U}) \r_{L^2_{\de-2}(\Si)} + \l (B^*_\nu +F^*)(\mathbb{U})\r_{H^\half(\dr\Si)} + \l \mathbb{U} \r_{L^2(S_\de)},
\end{aligned}
\end{equation}
\item[(ii)] If $\de\notin\mathbb{Z}$, then the operators $P_{F,\de}$ and $P^*_{F,\de}$ are semi-Fredholm.
\item[(iii)] If $-1<\de<0$, then the operators $P_{F,\de}$ and $P^*_{F,\de}$ are Fredholm with vanishing index.
\end{itemize}
\end{proposition}

\begin{proof}
Consider first the Neumann Laplace-Beltrami operator of $\bar{g}$ defined as follows:
\begin{align*}
\De_{\bar{g},N,\de} : H^2_{\delta}(\Sigma)&\longrightarrow L^2_{\delta-2}(\Sigma)\times H^{\frac{1}{2}}(\partial\Sigma)
\\ u &\longmapsto \pth{ \De_{\bar{g}}u,\partial_{\nu}u}.
\end{align*}
Let $0\leq\chi\leq 1$ be a smooth cutoff function such that $\chi=1$ in a neighborhood of $\dr\Si$ and such that $S\vcentcolon = \mathrm{supp}(\chi)$ is compact in $\Si$. Let $u\in H^2_\de(\Si)$. From Proposition 4 in \cite{Maxwell2005} we have
\begin{align*}
\l \chi u \r_{H^2_\de(\Si)} \lesssim \l \De_{\bar{g}} (\chi u) \r_{L^2(S)} + \l \dr_\nu (\chi u) \r_{H^{\frac{3}{2}}(\dr\Si)} + \l \chi u \r_{L^2(S)}
\end{align*}
and from Theorem 1.10 in \cite{Bartnik1986} (which requires $\de\notin\mathbb{Z}$) we have
\begin{align*}
\l (1-\chi) u \r_{H^2_\de(\Si)} \lesssim \l \De_{\bar{g}} ((1-\chi)u) \r_{L^2_{\de-2}(\Si)} + \l (1-\chi) u \r_{L^2(S')},
\end{align*}
where $S'$ is some bounded open subset of $\Si$ and we can assume that $S\subset S'$. Since $0\leq \chi \leq 1$ and $\chi=1$ in a neighborhood of $\dr\Si$ we have $\l \dr_\nu (\chi u) \r_{H^{\frac{3}{2}}(\dr\Si)}=\l \dr_\nu u \r_{H^{\frac{3}{2}}(\dr\Si)}$ and $\l \chi u \r_{L^2(S)}+\l (1-\chi) u \r_{L^2_\de(S')}\lesssim \l  u \r_{L^2_\de(S')}$. Moreover, expanding $\De_{\bar{g}} (\chi u)$ and $\De_{\bar{g}} ((1-\chi) u)$ and applying the interpolation result from Theorem 3.5 in Appendix I of \cite{ChoquetBruhat2009} (with $p=q=2$, $j=1$ and $m=2$) to the first order term in $u$ (which are supported in $S'$) we obtain
\begin{align*}
\l u \r_{H^2_\de(\Si)} \lesssim \l \De_{\bar{g}} u \r_{L^2_{\de-2}(\Si)}  + \mu \l u \r_{H^2_\de(\Si)}  +  \l \dr_\nu u \r_{H^{\frac{3}{2}}(\dr\Si)}+  C_\mu \l u \r_{L^2(S')}
\end{align*}
for all $\mu>0$. Choosing $\mu$ small enough in order to absorb $\mu \l u \r_{H^2_\de(\Si)}$ into the left hand side shows that
\begin{align*}
\l u \r_{H^2_\de(\Si)} \lesssim \l \De_{\bar{g},N,\de} (u) \r_{L^2_{\de-2}(\Si)\times H^{\frac{3}{2}}(\dr\Si)}  +  \l u \r_{L^2(S')}.
\end{align*}
As is standard, this inequality is sufficient to show that $\De_{\bar{g},N,\de}$ is a semi-Fredholm operator for all $\de\notin\mathbb{Z}$. Moreover, if $-1<\de<0$, Proposition 1 in \cite{Maxwell2005} shows that it is Fredholm with vanishing index. We have thus proved the proposition for $\De_{\bar{g},N,\de}$. However, using again Theorem 3.5 in Appendix I of \cite{ChoquetBruhat2009}, the standard trace embedding $ H^1(\Si)\xhookrightarrow{}H^\half(\dr\Si)$ for the lower order terms and the first property in Lemma \ref{lem embedding} we see that $P_{F,\de}$ and $P^*_{F,\de}$ are compact perturbations of $\De_{\bar{g},N,\de}$ (when considered as acting on a scalar function and the three components of a vector field). Therefore the properties of $\De_{\bar{g},N,\de}$ naturally extend to $P_{F,\de}$ and $P^*_{F,\de}$ and the proposition is proved.
\end{proof}

The adjoint $(P_{F,\de})^*$ of $P_{F,\de}$ is defined for all $(\mathbb{V},\mathbbm{h})\in L^2_{-1-\de}(\Si)\times H^{-\half}(\dr\Si)$ and for all $\mathbb{U}\in H^2_\de(\Si)$ by the relation
\begin{align}\label{def adjoint}
\left[(P_{F,\de})^*(\mathbb{V},\mathbbm{h})\right](\mathbb{U}) & \vcentcolon =  \ps{P(\mathbb{U}), \mathbb{V}}_{L^2_{\de-2}(\Si)\times L^2_{-1-\de}(\Si)} + \ps{ (B_\nu +F)(\mathbb{U}),\mathbbm{h}}_{H^\half(\dr\Si)\times H^{-\half}(\dr\Si)}.
\end{align}
The adjoint $\pth{P^*_{F,\de}}^*$ of $P^*_{F,\de}$ is defined
similarly. In order to characterize the mapping properties of
$P_{F,\delta}$ and $P_{F,\delta}^{*}$, we will use the following lemma
that characterizes the kernels of their adjoints.

\begin{lemma}
  \label{lem weird}
  If $\de\notin\mathbb{Z}$ and $F\in \EE(\dr\Si)$ then
  \begin{align}
    \ker\pth{ \pth{P_{F,\de}}^* } & = \enstq{\pth{\mathbb{V},\mathbb{V}_{|_{\dr\Si}}}}{\mathbb{V}\in \ker\pth{P^*_{F,-1-\de}}}, \label{kernel P^*}
    \\ \ker\pth{ \pth{P^*_{F,\de}}^* }& = \enstq{\pth{\mathbb{V},\mathbb{V}_{|_{\dr\Si}}}}{\mathbb{V}\in \ker\pth{P_{F,-1-\de}}} . \label{kernel P^**}
  \end{align}
\end{lemma}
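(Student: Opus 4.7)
The plan is to prove \eqref{kernel P^*}; the statement \eqref{kernel P^**} is strictly analogous after swapping the roles of $P$ and $P^*$. The inclusion $\supseteq$ is immediate from \eqref{IPP 2}: if $\mathbb{V}\in\ker(P^*_{F,-1-\de})$ then $\mathbb{V}\in H^2_{-1-\de}(\Si)$ with $P^*(\mathbb{V})=0$ and $(B_\nu^*+F^*)(\mathbb{V})=0$, so the trace $\mathbb{V}|_{\dr\Si}\in H^{3/2}(\dr\Si)\subset H^{-\half}(\dr\Si)$ is well defined and \eqref{IPP 2} yields $\ps{P(\mathbb{U})}{\mathbb{V}}_{L^2(\Si)}+\ps{(B_\nu+F)(\mathbb{U})}{\mathbb{V}|_{\dr\Si}}_{L^2(\dr\Si)}=0$ for every $\mathbb{U}\in H^2_\de(\Si)$, i.e.\ $(\mathbb{V},\mathbb{V}|_{\dr\Si})\in\ker((P_{F,\de})^*)$.

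For the converse inclusion, take $(\mathbb{V},\mathbbm{h})\in L^2_{-1-\de}(\Si)\times H^{-\half}(\dr\Si)$ satisfying the defining duality \eqref{def adjoint} for all $\mathbb{U}\in H^2_\de(\Si)$. Testing first against $\mathbb{U}\in C_c^\infty(\mathrm{int}\,\Si)$, one deduces $P^*(\mathbb{V})=0$ in $\mathcal{D}'(\mathrm{int}\,\Si)$, and interior elliptic regularity upgrades $\mathbb{V}$ to $H^2_{\mathrm{loc}}$ away from $\dr\Si$. The main technical step is to promote this to regularity up to the boundary, i.e.\ $\mathbb{V}\in H^2_{-1-\de}(\Si)$. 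For this I would rely on the standard theory of very weak solutions for elliptic boundary-value problems: since $(P,B_\nu+F)$ is a compact perturbation of the Neumann Laplace–Beltrami system used in Proposition \ref{prop fredholm}, the adjoint pair $(P^*,B_\nu^*+F^*)$ satisfies the same Lopatinski–Shapiro complementing condition, and the regularity estimate \eqref{eq:fredholm-property:elliptic-regularity-estimate} together with the hypothesis $\de\notin\mathbb{Z}$ (which ensures $-1-\de\notin\mathbb{Z}$) propagates through a boundary-straightening plus cut-off argument to give $H^2_{-1-\de}$ up to and including $\dr\Si$.

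Once $\mathbb{V}\in H^2_{-1-\de}(\Si)$ is secured, the traces $\mathbb{V}|_{\dr\Si}\in H^{3/2}(\dr\Si)$ and $(B_\nu^*+F^*)(\mathbb{V})\in H^{\half}(\dr\Si)$ are well defined, and \eqref{IPP 2} applies to arbitrary $\mathbb{U}\in H^2_\de(\Si)$. Subtracting the defining equation for $(\mathbb{V},\mathbbm{h})$ and using $P^*(\mathbb{V})=0$ reduces the situation to
\[
\ps{\mathbb{U}|_{\dr\Si}}{(B_\nu^*+F^*)(\mathbb{V})}_{L^2(\dr\Si)}+\ps{(B_\nu+F)(\mathbb{U})}{\mathbbm{h}-\mathbb{V}|_{\dr\Si}}_{L^2(\dr\Si)}=0
\]
for every admissible $\mathbb{U}$. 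Since the Dirichlet and Neumann parts of $\mathbb{U}$ on $\dr\Si$ can be prescribed independently and extended into $\Si$, first choosing $\mathbb{U}|_{\dr\Si}=0$ with $B_\nu(\mathbb{U})$ arbitrary forces $\mathbbm{h}=\mathbb{V}|_{\dr\Si}$, after which varying $\mathbb{U}|_{\dr\Si}$ forces $(B_\nu^*+F^*)(\mathbb{V})=0$. Hence $\mathbb{V}\in\ker(P^*_{F,-1-\de})$, completing the proof of \eqref{kernel P^*}. The main obstacle is the boundary regularity step; once that is available the remainder is essentially bookkeeping with \eqref{IPP 2} and the independence of Dirichlet and Neumann traces.
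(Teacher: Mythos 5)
Your proof is correct and takes essentially the same approach as the paper: test the adjoint identity against increasingly restricted classes of $\mathbb{U}$, obtain $P^*(\mathbb{V})=0$ in the interior, establish regularity up to the boundary, and then read off $\mathbbm{h}=\mathbb{V}|_{\dr\Si}$ and $(B_\nu^*+F^*)(\mathbb{V})=0$ from \eqref{IPP 2}. The only minor difference is ordering: you secure $\mathbb{V}\in H^2_{-1-\de}(\Si)$ before extracting the boundary condition, whereas the paper first derives $(B_\nu^*+F^*)(\mathbb{V})=0$ by testing against $\mathbb{U}$ with $(B_\nu+F)(\mathbb{U})=0$ and then invokes the a priori estimate \eqref{eq:fredholm-property:elliptic-regularity-estimate} to upgrade the regularity — a rearrangement that does not change the essential content.
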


\begin{proof}
We will only prove \eqref{kernel P^*} since \eqref{kernel P^**} follows by a similar argument. For the inclusion from right to left in \eqref{kernel P^*}, let $\mathbb{V}\in H^2_{-1-\de}(\Si)$. For any $\mathbb{U}\in H^2_\de(\Si)$ the integration by parts \eqref{IPP} is justified and together with \eqref{duality} and \eqref{def adjoint} we obtain
\begin{align*}
\left[(P_{F,\de})^*\pth{\mathbb{V},\mathbb{V}_{|_{\dr\Si}}}\right](\mathbb{U}) & = \ps{P(\mathbb{U}), \mathbb{V}}_{L^2_{\de-2}(\Si)\times L^2_{-1-\de}(\Si)} + \ps{ (B_\nu +F)(\mathbb{U}),\mathbb{V}}_{H^\half(\dr\Si)\times H^{-\half}(\dr\Si)}
\\& =  \ps{\mathbb{U}, P^*(\mathbb{V})}_{L^2_{\de}(\Si)\times L^2_{-3-\de}(\Si)}  + \ps{ \mathbb{U},(B^*_\nu +F^*)\pth{\mathbb{V}}}_{H^\half(\dr\Si)\times H^{-\half}(\dr\Si)}.
\end{align*}
If moreover $\mathbb{V}$ is assumed to satisfy $P^*(\mathbb{V})=0$ in $\Si$ and $(B^*_\nu +F^*)\pth{\mathbb{V}}=0$ in $\dr\Si$ the above expression vanishes and we get $(P_{F,\de})^*\pth{\mathbb{V},\mathbb{V}_{|_{\dr\Si}}}=0$.

For the inclusion from left to right in \eqref{kernel P^*}, let $(\mathbb{V},\mathbbm{h})\in L^2_{-1-\de}(\Si)\times H^{-\half}(\dr\Si)$ such that 
\begin{align}\label{hypothèse}
\ps{P(\mathbb{U}), \mathbb{V}}_{L^2_{\de-2}(\Si)\times L^2_{-1-\de}(\Si)} + \ps{ (B_\nu +F)(\mathbb{U}),\mathbbm{h}}_{H^\half(\dr\Si)\times H^{-\half}(\dr\Si)} & = 0,
\end{align}
for all $\mathbb{U}\in H^2_\de(\Si)$. We first apply \eqref{hypothèse} for every $\mathbb{U}\in C^\infty_c(\Si\setminus\dr\Si)$ smooth and compactly supported in $\Si\setminus\dr\Si$. Since the boundary term in \eqref{hypothèse} vanishes in this case, we get $\ps{P(\mathbb{U}), \mathbb{V}}_{L^2_{\de-2}(\Si)\times L^2_{-1-\de}(\Si)}=0$ for all $\mathbb{U}\in C^\infty_c(\Si\setminus\dr\Si)$, hence $P^*(\mathbb{V})=0$. In view of the first part of Theorem 6.5 in Chapter 2 of \cite{Lions1972}\footnote{\label{footnote10}More specifically, the correspondence between our notations and the ones of Theorem 6.5 in Chapter 2 of \cite{Lions1972} is $A=P^*$, $r=0$, $\Om=\Si$ and $(B_0,B_1)(\mathbb{V})=\pth{\mathbb{V}_{|_{\dr\Si}},(B^*_\nu +F^*)\pth{\mathbb{V}}}$. While this result is stated for scalar operators, note that it extends immediately to principally scalar operators such as the ones we consider.}, the fact that $P^*(\mathbb{V})=0$ with $\mathbb{V}\in L^2_{-1-\de}(\Si)$ allows us to get the trace properties $\mathbb{V}\in H^{-\half}(\dr\Si)$ and $(B^*_\nu + F^*)(\mathbb{V})\in H^{-\frac{3}{2}}(\dr\Si)$. Now if $\mathbb{U}\in H^2_\de(\Si)$, the second part of Theorem 6.5 in Chapter 2 of \cite{Lions1972}\footnote{In addition to the correspondence of Footnote \ref{footnote10}, we also have $A^*=P$ and $(T_0,T_1)(\mathbb{U})=\pth{\mathbb{U}_{|_{\dr\Si}},(B_\nu + F)\pth{\mathbb{U}}}$.} gives
\begin{align*}
  \ps{P(\mathbb{U}),\mathbb{V}}_{L^2_{\de-2}(\Si)\times L^2_{-1-\de}(\Si)}  = - \ps{(B_\nu+F)(\mathbb{U}),\mathbb{V}}_{H^\half(\dr\Si)\times H^{-\half}(\dr\Si)} + \ps{\mathbb{U},(B_\nu^*+F^*)(\mathbb{V})}_{H^{\frac{3}{2}}(\dr\Si)\times H^{-\frac{3}{2}}(\dr\Si)}
\end{align*}
where we used $P^*(\mathbb{V})=0$. Using now \eqref{hypothèse} we deduce 
\begin{align}\label{machin}
\ps{(B_\nu+F)(\mathbb{U}),\mathbb{V}-\mathbbm{h}}_{H^\half(\dr\Si)\times H^{-\half}(\dr\Si)} & = \ps{\mathbb{U},(B_\nu^*+F^*)(\mathbb{V})}_{H^{\frac{3}{2}}(\dr\Si)\times H^{-\frac{3}{2}}(\dr\Si)}
\end{align}
for all $\mathbb{U}\in H^2_\de(\Si)$. In view of the trace lifting properties found in Theorem 8.3 in Chapter 1 of \cite{Lions1972}, the map $B_\nu+ F : \enstq{\mathbb{U}\in H^2_\de(\Si)}{\mathbb{U}_{|_{\dr\Si}}=0} \longrightarrow H^\half(\dr\Si)$ is surjective. Therefore, \eqref{machin} implies $\mathbbm{h}=\mathbb{V}_{|_{\dr\Si}}$. Similarly, the surjectivity of the trace operator from $H^2_\de(\Si)$ to $H^{\frac{3}{2}}(\dr\Si)$ (see again Theorem 8.3 in Chapter 1 of \cite{Lions1972}) and \eqref{machin} again implies $(B_\nu^*+F^*)(\mathbb{V})=0$. 

It only remains to prove that $\mathbb{V}\in H^2_{-1-\de}(\Si)$. For that, we adapt the strategy of Proposition 5.4 in Chapter 2 of \cite{Lions1972}. More precisely, we introduce the following subset of $L^2_{-1-\de}(\Si)$:
\begin{align}\label{def Nstar}
N^* \vcentcolon = \enstq{\mathbb{Y}\in H^2_{-1-\de}(\Si)}{P^*(\mathbb{Y})=0, \; (B_\nu^*+F^*)(\mathbb{Y}) =0}.
\end{align}
Note that by the elliptic estimate \eqref{estimation elliptique}, $N^*$ is a closed subset of $L^2_{-1-\de}(\Si)$. We denote by $\Pi_{N^*}$ the orthogonal projection on $N^*$ for the scalar product $\ps{\ps{\mathbb{Y},\mathbb{W}}}\vcentcolon=\int_\Si r^{2\de-1}\mathbb{Y}\cdot\mathbb{W}$ defined on $L^2_{-1-\de}(\Si)$. We now decompose $\mathbb{V}=\mathbb{V}_1 + \mathbb{V}_2$ with $\mathbb{V}_1\in N^*$ and $\mathbb{V}_2\in L^2_{-1-\de}(\Si)$ satisfying
\begin{align}\label{orthogonalité}
\ps{\ps{\mathbb{V}_2,\mathbb{W}}}=0, \quad \text{for all $\mathbb{W}\in N^*$}.
\end{align}
Our goal is now to show that $\mathbb{V}_2=0$. For any $\mathbb{H}\in L^2_{-1-\de}(\Si)$, Lemma \ref{lemma lax milgram} below gives the existence of some $\mathbb{U}_{\mathbb{H}}\in H^2_\de(\Si)$ such that 
\begin{align*}
P\pth{\mathbb{U}_{\mathbb{H}}} & = r^{2\de-1} (\mathrm{Id}-\Pi_{N^*})(\mathbb{H}) \quad \text{and} \quad (B_\nu + F)(\mathbb{H}) = 0.
\end{align*}
The second part of Theorem 6.5 in Chapter 2 of \cite{Lions1972} applied to $\mathbb{U}_{\mathbb{H}}$ and $\mathbb{V}_2$ (which satisfies $P^*(\mathbb{V}_2)=0$ and $(B_\nu^*+F^*)(\mathbb{V}_2)=0$ since $\mathbb{V}_2=\mathbb{V}-\mathbb{V}_1$) becomes $\ps{P(\mathbb{U}_{\mathbb{H}}), \mathbb{V}_2}_{L^2_{\de-2}(\Si)\times L^2_{-1-\de}(\Si)}=0$ and thus $\ps{\ps{(\mathrm{Id}-\Pi_{N^*})(\mathbb{H}),\mathbb{V}_2}}=0$. Thanks to \eqref{orthogonalité}, this becomes $\ps{\ps{\mathbb{H},\mathbb{V}_2}}=0$. Since this is true for every $\mathbb{H}\in L^2_{-1-\de}(\Si)$, we have indeed proved that $\mathbb{V}_2=0$. Therefore, we have obtained $\mathbb{V}\in N^* \subset H^2_{-1-\de}(\Si)$, which concludes the proof of the lemma.
\end{proof}

We now prove an auxiliary technical lemma used in the previous proof.

\begin{lemma}\label{lemma lax milgram}
For every $\mathbb{H}\in L^2_{-1-\de}(\Si)$, there exists $\mathbb{U}_{\mathbb{H}}\in H^2_\de(\Si)$ such that 
\begin{align*}
P\pth{\mathbb{U}_{\mathbb{H}}} & = r^{2\de-1} (\mathrm{Id}-\Pi_{N^*})(\mathbb{H}) \quad \text{and} \quad (B_\nu + F)(\mathbb{H}) = 0,
\end{align*}
where $\Pi_{N^*}$ is defined in the proof of Lemma \ref{lem weird}.
\end{lemma}

\begin{proof}
We consider the space $M^*\vcentcolon = \enstq{\mathbb{V}\in H^2_{-1-\de}(\Si)}{ (B_\nu^*+F^*)(\mathbb{V}) = 0}$ and define on it the continuous quadratic form
\begin{align*}
q(\mathbb{V},\mathbb{W}) = \int_{\Si} r^{2\de+3} P^*(\mathbb{V})\cdot P^*(\mathbb{W}).
\end{align*}
With $N^*$ defined in \eqref{def Nstar}, we use the notation $(N^*)^{\perp_{M^*}} \vcentcolon = (N^*)^\perp\cap M^*$. Using the semi-Fredholm estimate, the quadratic form $q$ is coercive on $(N^*)^{\perp_{M^*}}$. Therefore for any $\mathbb{H}\in L^2_{-1-\de}(\Si)$, we may apply Lax-Milgram's theorem which yields the existence of a unique $\mathbb{V}_{\mathbb{H}}\in (N^*)^{\perp_{M^*}}$ such that 
\begin{align*}
q(\mathbb{V}_{\mathbb{H}},\mathbb{W})= \ps{\ps{ \mathbb{H},\mathbb{W} }}, \quad \text{for any $\mathbb{W}\in(N^*)^{\perp_{M^*}}$.}
\end{align*}
Recalling the definition of $\Pi_{N^*}$, we infer $q(\mathbb{V}_{\mathbb{H}},\mathbb{W})= \ps{\ps{ (\mathrm{Id}-\Pi_{N^*})(\mathbb{H}),\mathbb{W} }}$ for any $\mathbb{W}\in(N^*)^{\perp_{M^*}}$. Moreover, since for any $\mathbb{W}\in N^*$ we have $q(\mathbb{V}_{\mathbb{H}},\mathbb{W})=0$ and $\ps{\ps{ (\mathrm{Id}-\Pi_{N^*})(\mathbb{H}),\mathbb{W} }}=0$, we obtain
\begin{align}\label{LM}
q(\mathbb{V}_{\mathbb{H}},\mathbb{W})= \ps{\ps{ (\mathrm{Id}-\Pi_{N^*})(\mathbb{H}),\mathbb{W} }}, \quad \text{for any $\mathbb{W}\in M^*$.}
\end{align}
By considering first $\mathbb{W}$ smooth and compactly supported in $\Si\setminus \dr\Si$ and then compactly supported in $\Si$ and belonging to $M^*$, we deduce by integration by parts in \eqref{LM} that
\begin{align}\label{4thorder}
P(r^{2\de+3}P^*(\mathbb{V}_{\mathbb{H}}))=r^{2\de-1}(\mathrm{Id}-\Pi_{N^*})(\mathbb{H}), \quad (B_\nu + F)(r^{2\de+3}P^*(\mathbb{V}_{\mathbb{H}})) = 0, \quad (B_\nu^* + F^*)(\mathbb{V}_{\mathbb{H}}) = 0.
\end{align}
To upgrade the regularity of $\mathbb{V}_{\mathbb{H}}$ from $H^2_{-1-\de}(\Si)$ to $H^4_{-1-\de}(\Si)$, we apply two different elliptic regularity results to the 4-th order elliptic boundary value problem appearing in \eqref{4thorder}: to $(1-\chi)\mathbb{V}_{\mathbb{H}}$ we apply twice Proposition 1.6 from \cite{Bartnik1986} and to $\chi \mathbb{V}_{\mathbb{H}}$ we apply Proposition 5.1 from Chapter 2 of \cite{Lions1972}, where $\chi$ is as in the proof of Proposition \ref{prop fredholm}. We thus obtain $\mathbb{V}_{\mathbb{H}}\in H^4_{-1-\de}(\Si)$ and setting $\mathbb{U}_{\mathbb{H}}\vcentcolon = r^{2\de+3}P^*(\mathbb{V}_{\mathbb{H}})$ concludes the proof of the lemma.
\end{proof}

In the next lemma, we give a unique continuation result from the boundary for $P$ and $P^*$. 

\begin{lemma}
  \label{lem unique continuation}
  Let $\Ga$ be an open subset of $\dr\Si$. If $\mathbb{U}\in H^2_{loc}(\Si)$ satisfies $P(\mathbb{U})=0$ (resp. $P^*(\mathbb{U})=0$) on $\Si$ and $B_\nu(\mathbb{U})=\mathbb{U}=0$ (resp. $B^*_\nu(\mathbb{U})=\mathbb{U}=0$) on $\Ga$, then $\mathbb{U}=0$ on $\Si$.
\end{lemma}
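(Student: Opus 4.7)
The plan is to reduce the lemma to a classical Aronszajn-type interior unique continuation theorem, by first showing that the boundary conditions on $\Gamma$ amount to vanishing full Cauchy data, then extending $\mathbb{U}$ by zero across $\Gamma$ to a slightly enlarged manifold, and finally propagating the vanishing through $\Sigma$ by two applications of interior unique continuation.

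First I would observe that the Dirichlet condition $\mathbb{U}=0$ on $\Gamma$ automatically kills all the zeroth-order contributions in $B_\nu$: writing out the definition, the terms $\mathrm{tr}_{\bar{g}}\bar{\pi}\,\bar{g}(X,\nu)$ and $\bar{\pi}(X,\nu)$ vanish on $\Gamma$ because $X=0$ there, and the Christoffel correction in $D_\nu X$ drops out for the same reason. Hence $B_\nu(\mathbb{U})=0$ on $\Gamma$ reduces to $\partial_\nu u=0$ and $\partial_\nu X=0$ in any smooth local coordinate chart, and the identical manipulation applied to $B_\nu^*$ handles the $P^*$ case. In both cases, $\mathbb{U}$ has vanishing Dirichlet and Neumann traces on $\Gamma$.

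Next, since $\partial\Sigma$ is smooth and compact, I would embed $\Sigma$ into a slightly larger smooth manifold $\widetilde\Sigma$ by gluing a small collar across $\Gamma$, and extend $\bar{g}$ and $\bar{\pi}$ smoothly to $\widetilde\Sigma$. Setting $\widetilde{\mathbb{U}}:=\mathbb{U}$ on $\Sigma$ and $\widetilde{\mathbb{U}}:=0$ on $\widetilde\Sigma\setminus\Sigma$, the simultaneous vanishing of the two Cauchy traces guarantees that $\widetilde{\mathbb{U}}$ belongs to $H^2_{loc}$ in a neighborhood $V$ of $\Gamma$ in $\widetilde\Sigma$, and that no distributional contribution concentrated along $\Gamma$ arises when one applies $P$ or $P^*$. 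Consequently, $P\widetilde{\mathbb{U}}=0$ (resp.\ $P^*\widetilde{\mathbb{U}}=0$) weakly on $V$.

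Finally, I would invoke Aronszajn's classical unique continuation theorem. A direct inspection of \eqref{def P}, combined with the standard Bochner-type identity $\mathrm{div}_{\bar{g}}L_{\bar{g}}X=\Delta_{\bar{g}}^{(1)}X+\mathrm{Ric}(\bar{g})\cdot X$, shows that $P$ (and analogously $P^*$) is a second-order elliptic system with diagonal scalar principal symbol, namely $-8|\xi|^2$ on the scalar component and $-2|\xi|^2\,\mathrm{Id}$ on the vector component; the coupling terms between $u$ and $X$ are of order at most one and all coefficients are smooth. This structural observation places the system in the scope of Aronszajn's theorem. Since $\widetilde{\mathbb{U}}$ vanishes identically on the open set $\widetilde\Sigma\setminus\overline{\Sigma}\subset V$, it must vanish on the connected component of $V$ containing this set, which in particular contains an open subset of $\Sigma$ adjacent to $\Gamma$. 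A second application of Aronszajn's theorem, this time on the connected manifold $\Sigma$, then propagates the vanishing of $\mathbb{U}$ to all of $\Sigma$. The only non-routine point is the principal-symbol verification that brings $P$ and $P^*$ into the Aronszajn framework; once this is in hand, the rest is a routine combination of trace analysis, zero extension, and interior unique continuation.
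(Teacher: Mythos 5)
Your proof is correct but takes a genuinely different route from the paper's. The paper argues via a boundary Carleman estimate: it cites Theorem 9.8 of \cite{lerousseauEllipticCarlemanEstimates2022} (unique continuation from the boundary for a scalar second-order elliptic operator) and handles the system aspect by observing that $P$ and $P^*$ are diagonal at leading order, summing the four scalar Carleman estimates, and absorbing the lower-order coupling terms. Your argument instead reduces the boundary problem to an interior one: you first note that $\mathbb{U}=0$ on $\Gamma$ annihilates all the zeroth-order terms in $B_\nu$ (and, since $X=0$, the Christoffel correction in $D_\nu X$), so that $\mathbb{U}=B_\nu(\mathbb{U})=0$ on $\Gamma$ amounts to vanishing full Cauchy data; you then extend by zero across $\Gamma$ to an enlarged manifold (the vanishing Cauchy data ensure the extension remains in $H^2_{loc}$, so no singular distributional source appears on $\Gamma$) and apply interior unique continuation of Aronszajn type for systems whose principal symbol is a scalar multiple of the identity on each component, first in a neighborhood of $\Gamma$ in the enlarged manifold and then across the connected interior of $\Sigma$. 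Both proofs ultimately hinge on the same structural observation about the diagonal scalar principal part and both need a Carleman-type input for such systems (Aronszajn's original statement is scalar, so the system case still requires the same absorption of first-order coupling terms that the paper performs). Your route has the merit of making the reduction to vanishing Cauchy data explicit and of appealing to a more classical interior theorem; the paper's route avoids the extension-by-zero step and works entirely within $\Sigma$.
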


\begin{proof}
Lemma \ref{lem unique continuation} is the extension to systems of the corresponding statement for second order elliptic scalar operators on manifold in Theorem 9.8 in \cite{LeRousseau2022}. This theorem is an immediate corollary of the Carleman estimates proved in Theorem 8.30 in \cite{LeRousseau2022} (itself being deduced from the corresponding Euclidean Carleman estimates of Theorem 3.28 in \cite{LeRousseau2022a}). Since $P$ and $P^*$ are diagonal operators at leading order, we can sum the Carleman estimates given in the statement of Theorem 8.30 in \cite{LeRousseau2022} applied to each of the four components of $\mathbb{U}$ and absorb the lower order coupling terms by playing with the large constant $\tau$ in this statement.
\end{proof}

\subsection{A special boundary condition}
\label{section BC}

In this section, we will construct a distinguished boundary condition
so that $P_{F,\delta}$ as defined in \eqref{def P} has good mapping properties for any fixed $\de\in(0,1)$.
\begin{lemma}
  \label{lem bartnik}
  Let $F_0\in \EE(\dr\Si)$. There exists
  $\eta(F_0)>0$ only depending on $F_0$ such that for all $F\in \EE(\dr\Si)$
  on $\dr\Si$,
  \begin{align*}
    \l F - F_0 \r_{H^{\frac{1}{2}}(\dr\Si)}\leq \eta(F_0) \Longrightarrow \dim(\ker(P_{F,-\de})) \leq \dim(\ker(P_{F_0,-\de})).
  \end{align*}
\end{lemma}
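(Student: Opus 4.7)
\Cref{lem bartnik} is an upper semi-continuity statement for the dimension of $\ker P_{F,-\de}$ under perturbations of the boundary operator $F\in\EE(\dr\Si)$. The plan is to derive it from the standard perturbation principle for Fredholm operators: continuity of $F\mapsto P_{F,-\de}$ in operator norm, combined with the closed range property of $P_{F_0,-\de}$ that follows from \Cref{prop fredholm}.

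The first step is to verify that $F\mapsto P_{F,-\de}$ is continuous in operator norm as a map $H^2_{-\de}(\Si)\to L^2_{-\de-2}(\Si)\times H^\half(\dr\Si)$. Since only the boundary piece $F(\mathbb{U})|_{\dr\Si}$ depends on $F$, this reduces, via the trace estimate $\l\mathbb{U}|_{\dr\Si}\r_{H^{3/2}(\dr\Si)}\lesssim \l\mathbb{U}\r_{H^2_{-\de}(\Si)}$, to the Sobolev multiplication bound
$$\l(F-F_0)(\mathbb{U})\r_{H^\half(\dr\Si)}\lesssim \l F-F_0\r_{H^\half(\dr\Si)}\l\mathbb{U}|_{\dr\Si}\r_{H^{3/2}(\dr\Si)},$$
which holds on the two-dimensional surface $\dr\Si$ because $H^{3/2}(\dr\Si)\hookrightarrow L^\infty(\dr\Si)$ in dimension two makes $H^{3/2}(\dr\Si)$ a pointwise multiplier on $H^\half(\dr\Si)$. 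Consequently $\|P_{F,-\de}-P_{F_0,-\de}\|\to 0$ as $F\to F_0$ in $H^\half(\dr\Si)$.

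The second step is to argue by contradiction. If no $\eta(F_0)$ as claimed exists, there would be a sequence $F_n\to F_0$ in $H^\half(\dr\Si)$ with $\dim\ker P_{F_n,-\de}\geq \dim\ker P_{F_0,-\de}+1$ for every $n$. A dimension count for the $H^2_{-\de}$-orthogonal projection onto $\ker P_{F_0,-\de}$ restricted to $\ker P_{F_n,-\de}$ produces a non-zero $\mathbb{U}_n\in\ker P_{F_n,-\de}\cap(\ker P_{F_0,-\de})^\perp$, which we normalize to have $\l\mathbb{U}_n\r_{H^2_{-\de}(\Si)}=1$. By \Cref{prop fredholm} the operator $P_{F_0,-\de}$ is Fredholm on the Hilbert space $H^2_{-\de}(\Si)$, so its restriction to $(\ker P_{F_0,-\de})^\perp$ is injective with closed range, and the open mapping theorem delivers a constant $c>0$ with $\l\mathbb{U}\r_{H^2_{-\de}(\Si)}\leq c\l P_{F_0,-\de}\mathbb{U}\r_{L^2_{-\de-2}(\Si)\times H^\half(\dr\Si)}$ for all $\mathbb{U}\in(\ker P_{F_0,-\de})^\perp$. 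Applied to $\mathbb{U}_n$, and using $P_{F_0,-\de}\mathbb{U}_n=(P_{F_0,-\de}-P_{F_n,-\de})\mathbb{U}_n$ whose target norm tends to zero by the first step, this forces $\l\mathbb{U}_n\r_{H^2_{-\de}(\Si)}\to 0$, contradicting the normalization. The main point to watch is the operator norm continuity, which hinges on the two-dimensionality of $\dr\Si$ through the embedding $H^{3/2}\hookrightarrow L^\infty$ needed for the multiplication estimate.
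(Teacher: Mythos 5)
Your proof is correct, and it takes a route that is slightly more self-contained than the paper's. The paper establishes the operator-norm estimate $\l P_{F,-\de}-P_{F_0,-\de}\r_{\mathrm{op}}\lesssim\l F-F_0\r_{H^{1/2}(\dr\Si)}$ and then invokes Proposition~1.11 of Bartnik's paper as a black box for the upper semi-continuity of $\dim\ker$ under operator-norm perturbations of Fredholm operators. You prove the same operator-norm estimate (and, helpfully, spell out the trace and two-dimensional Sobolev multiplication ingredients that the paper leaves implicit), but then replace the citation by a direct contradiction argument: a dimension count to produce normalized $\mathbb{U}_n\in\ker P_{F_n,-\de}\cap(\ker P_{F_0,-\de})^\perp$, followed by the coercivity estimate on $(\ker P_{F_0,-\de})^\perp$ that the closed-range Fredholm property gives via the open mapping theorem, and then the collapse $\l\mathbb{U}_n\r\le c\l(P_{F_0,-\de}-P_{F_n,-\de})\mathbb{U}_n\r\to0$. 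This is exactly the content of Bartnik's Proposition~1.11, so conceptually the two proofs coincide; what your version buys is transparency and independence from the external reference, at the cost of two extra paragraphs. Both hinge on precisely the same two facts: Fredholmness of $P_{F_0,-\de}$ (\Cref{prop fredholm}) and the continuity of $F\mapsto P_{F,-\de}$ in operator norm.
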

\begin{proof}
    \Cref{lem bartnik} follows immediately from Proposition 1.11 in
    \cite{Bartnik1986} and the fact that
    \begin{align*}
        \l P_{F,-\de} - P_{F_0,-\de}\r_{\mathrm{op}} \lesssim \l F-F_0 \r_{H^{\frac{1}{2}}(\dr\Si)},
    \end{align*}
    where $\norm*{\cdot}_{\mathrm{op}}$ refers to the standard operator
    norm.
\end{proof}

In the next lemma, we show how to lower the dimension of the kernel in
a constructive way.

\begin{lemma}
\label{lem bartnik 2}
Let $\de\in(0,1)$. Let $\mathcal{W}$ be an open subset of $\dr\Si$ such that $\dr\Si\setminus \mathcal{W}$ contains an open set. Let $F_0\in \EE(\dr\Si)$ such that
  $\dim(\ker(P_{F_0,-\de}))>0$. Then there exists $F_1\in \EE(\dr\Si)$ such that 
  \begin{equation}
    \label{eq:dimension-reducing-boundary-condition-change}
    \dim(\ker(P_{F_1,-\de})) < \dim(\ker(P_{F_0,-\de})), \quad \text{and} \quad (F_1-F_0)_{|_\mathcal{W}}=0.
  \end{equation}
\end{lemma}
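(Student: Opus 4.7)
The plan is to seek $F_1$ of the form $F_1 = F_0 + tG$, where $t > 0$ is small and $G \in \EE(\dr\Si)$ satisfies $G|_{\mathcal{W}} = 0$. For any such $G$ and for $t$ small enough that $\l tG \r_{H^{\half}(\dr\Si)} \leq \eta(F_0)$, \Cref{lem bartnik} immediately gives the non-strict bound $\dim\ker(P_{F_1,-\de}) \leq \dim\ker(P_{F_0,-\de})$, so the whole task is to choose $G$ that makes this inequality strict.

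To detect when the dimension drops, I would perform a Lyapunov--Schmidt reduction. Set $K = \ker(P_{F_0,-\de})$, of dimension $N > 0$. Since $-\de \in (-1,0)$, \Cref{prop fredholm} says $P_{F_0,-\de}$ is Fredholm of index $0$, so its cokernel also has dimension $N$ and by \Cref{lem weird} can be identified with
\begin{equation*}
L = \enstq{\pth{\mathbb{V},\mathbb{V}|_{\dr\Si}}}{\mathbb{V} \in \ker(P^*_{F_0,\de-1})}.
\end{equation*}
Choosing a topological complement $K^\perp$ of $K$ in $H^2_{-\de}(\Si)$ and inverting $P_{F_0,-\de}$ on $K^\perp$, the standard Lyapunov--Schmidt argument reduces $P_{F_0+tG,-\de}\mathbb{U} = 0$ for small $t \ne 0$ to a linear equation $\mathcal{A}^{(t)}_G(\mathbb{U}_0) = 0$ on $K$, with $\mathcal{A}^{(t)}_G = \mathcal{A}_G + O(t)$ and leading-order map $\mathcal{A}_G : K \to L$ given by the projection onto $L$ of $\mathbb{U}_0 \mapsto (0, G(\mathbb{U}_0))$. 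Upper semicontinuity of kernel dimensions on the finite-dimensional space $K$ then gives $\dim\ker\mathcal{A}^{(t)}_G \leq \dim\ker\mathcal{A}_G$ for $t$ small, so \eqref{eq:dimension-reducing-boundary-condition-change} will follow once we find some $G$ with $\mathcal{A}_G \not\equiv 0$.

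Since $(0, G(\mathbb{U}_0))$ pairs with $(\mathbb{V},\mathbb{V}|_{\dr\Si})$ purely on the boundary, $\mathcal{A}_G(\mathbb{U}_0) = 0$ is equivalent to
\begin{equation*}
\ps{G(\mathbb{U}_0),\mathbb{V}}_{L^2(\dr\Si)} = 0 \qquad \text{for all } \mathbb{V} \in \ker(P^*_{F_0,\de-1}).
\end{equation*}
Suppose for contradiction this vanished for every $\mathbb{U}_0 \in K$ and every $G \in \EE(\dr\Si)$ with $G|_{\mathcal{W}} = 0$. Expanding the pairing in components $\mathbb{U}_0 = (u_0,X_0)$, $\mathbb{V} = (v,Y)$, $G = (b,\om,Z,\xi)$, and letting each component of $G$ range freely over smooth tensor fields compactly supported in the open set $\dr\Si\setminus\mathcal{W}$, this would force the pointwise identities $u_0 v = 0$, $v X_0 = 0$, $u_0 Y = 0$, $X_0 \otimes Y^\flat = 0$ throughout $\dr\Si\setminus\mathcal{W}$; equivalently, at every point of $\dr\Si\setminus\mathcal{W}$ either $\mathbb{U}_0$ or $\mathbb{V}$ must vanish.

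To reach a contradiction, I would pick nonzero $\mathbb{U}_0 \in K$ and nonzero $\mathbb{V} \in \ker(P^*_{F_0,\de-1})$, both of which exist since $N > 0$, together with a nonempty connected open set $\Omega \subset \dr\Si\setminus\mathcal{W}$ (which exists by hypothesis). Then $\Omega$ is covered by the relatively closed sets $\Omega\cap\{\mathbb{U}_0=0\}$ and $\Omega\cap\{\mathbb{V}=0\}$. If the latter has nonempty interior in $\Omega$, then since $B_\nu^*\mathbb{V} = -F_0^*(\mathbb{V}) = 0$ on that interior, \Cref{lem unique continuation} applied to $P^*$ forces $\mathbb{V} \equiv 0$; otherwise $\Omega \setminus \{\mathbb{V}=0\}$ is a nonempty open subset on which $\mathbb{U}_0 = 0$, and \Cref{lem unique continuation} applied to $P$ gives $\mathbb{U}_0 \equiv 0$. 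Either case contradicts the choice of $\mathbb{U}_0, \mathbb{V}$, producing the desired $G$. The step I expect to be most delicate is setting up the Lyapunov--Schmidt reduction cleanly in the product space $L^2_{-\de-2}(\Si) \times H^{\half}(\dr\Si)$ and correctly tracking the identification of the cokernel via \Cref{lem weird}; once that framework is in place, the unique-continuation step is a direct application of \Cref{lem unique continuation} made possible by the assumption that $\dr\Si\setminus\mathcal{W}$ has nonempty interior.
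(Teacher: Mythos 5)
Your argument is correct and establishes the lemma, but it follows a genuinely different route from the paper's. The paper's proof is quantitative and hands-on: it picks nonzero $\mathbb{V}_0\in\ker(P_{F_0,-\de})$ and $\mathbb{V}_0^*\in\ker(P^*_{F_0,-1+\de})$, applies \Cref{lem unique continuation} to conclude that $\{\mathbb{V}_0\neq 0\}\cap\{\mathbb{V}_0^*\neq 0\}\cap\dr\Si$ is dense open, constructs $H_0$ supported in $\dr\Si\setminus\mathcal{W}$ normalized by $\int_{\dr\Si}H_0(\mathbb{V}_0)\cdot\mathbb{V}_0^*=1$, and perturbs by $\eta H_0/\|H_0\|_{H^{3/2}}$. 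The contradiction is then reached via the uniform stability estimate of Proposition~1.12 in \cite{bartnikMassAsymptoticallyFlat1986} (which produces $\mathbb{U}_\eta\in\ker(P_{F_\eta,-\de})$ within $O(\eta)$ of $\mathbb{V}_0$, cf.~\eqref{bartnik 1}), together with the integration-by-parts identity \eqref{IPP} forcing $\int_{\dr\Si}H_0(\mathbb{U}_\eta)\cdot\mathbb{V}_0^*=0$. You replace that quantitative step by a Lyapunov--Schmidt reduction to a finite-dimensional map $\mathcal{A}_G:K\to L$ and upper semicontinuity of kernel dimension, which is a softer packaging of the same phenomenon. Both proofs rest on exactly the same two structural inputs --- the cokernel identification of \Cref{lem weird} and unique continuation from the boundary (\Cref{lem unique continuation}) --- and the nondegeneracy step is essentially shared: where the paper uses unique continuation to show the nonvanishing set on $\dr\Si$ is dense (so $H_0$ can be built supported off $\mathcal{W}$), you use it to rule out $\mathcal{A}_G\equiv 0$ for all $G$ supported off $\mathcal{W}$. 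Your approach avoids appealing to Bartnik's stability estimate; the cost is setting up the Lyapunov--Schmidt machinery cleanly in $L^2_{-\de-2}(\Si)\times H^{1/2}(\dr\Si)$, which you rightly identify as the delicate part. Two small points worth tidying: (i) since $\mathcal{A}^{(0)}_G$ must vanish (at $t=0$ the kernel is all of $K$), the reduced operator should be written $\mathcal{A}^{(t)}_G=t\bigl(\mathcal{A}_G+O(t)\bigr)$, and upper semicontinuity is then applied to the rescaled family $\mathcal{A}_G+O(t)$, whose kernel agrees with $\ker\mathcal{A}^{(t)}_G$ for $t\neq 0$; (ii) when invoking \Cref{lem unique continuation} you should note explicitly that $\mathbb{V}=0$ (not only $B^*_\nu\mathbb{V}=0$) holds on the open set, as the lemma requires both, though this is automatic on the interior of $\{\mathbb{V}=0\}$.
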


\begin{proof}
  Using \Cref{prop fredholm}, the Fredholm index of $P_{F_0,-\de}$ is
  zero for $\de\in(0,1)$. Thus, the assumption that
  $\dim(\ker(P_{F_0,-\de}))>0$ implies that
  $\dim\pth{\ker\pth{(P_{F_0,-\de})^*}}>0$. According to \eqref{kernel
    P^*} this implies that
  $\dim\pth{\ker\pth{P_{F_0,-1+\de}^*}}>0$. Now consider
  $(\mathbb{V}_0,\mathbb{V}_0^*)\in \ker(P_{F_0,-\de})\times
  \ker(P_{F_0,-1+\de}^*)$ such that $\mathbb{V}_0, \mathbb{V}_0^*$ are
  both non-zero. Since they satisfy $(P,B_\nu+F_0)(\mathbb{V}_0)=0$
  and $(P^*,B^*_\nu+F_0^*)(\mathbb{V}_0^*)=0$, unique continuation for
  the operators $P$ and $P^*$ (recall Lemma \ref{lem unique continuation}) implies that 
  \begin{align*}
      \{ \mathbb{V}_0\neq 0\} \cap \{ \mathbb{V}_0^*\neq 0\} \cap \dr\Si
  \end{align*}
  is a dense open subset of $\dr\Si$. Thus, there exists $H_0\in \EE(\dr\Si)$ such that $H_{0|_{\mathcal{W}}}=0$ and
  \begin{align}\label{def h zero}
    \int_{\dr\Si}H_0 (\mathbb{V}_0) \cdot \mathbb{V}_0^* = 1. 
  \end{align}
  For $\eta>0$, we define
  \begin{align*}
    F_\eta\vcentcolon=F_0 + \eta \frac{H_0}{\l H_0 \r_{H^{\frac{3}{2}}(\dr\Si)}}.   
  \end{align*}
  We will show that there exists some $\eta$ sufficiently small such
  that
  $\dim\left(\ker (P_{F_{\eta},-\delta})\right)<\dim\left(\ker
    (P_{F_0,-\delta})\right)$.  For $\eta\leq \eta(F_0)$, Lemma
  \ref{lem bartnik} already implies that
  $\dim(\ker(P_{F_\eta,-\de})) \leq \dim(\ker(P_{F_0,-\de}))$.  Now
  assume for the sake of contradiction that
  $\dim(\ker(P_{F_\eta,-\de})) = \dim(\ker(P_{F_0,-\de}))$. From
  Proposition 1.12 in \cite{Bartnik1986}\footnote{The proof of Proposition 1.12 in \cite{Bartnik1986} only relies on estimate (1.26) there, which corresponds to \eqref{estimation elliptique} in our article. While Proposition 1.12 in \cite{Bartnik1986} is stated for scalar operators, note that it extends immediately to principally scalar operators such as the ones we consider.} there
  exists $0<\eta'(F_0)\leq \eta(F_0)$ and $C(F_0)>0$ such that if
  $\eta\leq \eta'(F_0)$ then
  \begin{align*}
    \l \mathbb{U} - \ker(P_{F_\eta,-\de}) \r_{H^2_{-\de}(\Si)} \leq C(F_0)\pth{ \l P(\mathbb{U})\r_{L^2_{-\de-2}(\Si)} + \l (B_\nu+F_\eta)(\mathbb{U})\r_{H^\half(\dr\Si)}},
  \end{align*}
  for all $\mathbb{U}\in H^2_{-\de}(\Sigma)$. We apply this to
  $\mathbb{V}_0\in \ker(P_{F_0,-\de})$ as chosen above. Since
  $\ker(P_{F_\eta,-\de})$ is finite-dimensional, there exists
  some $\mathbb{U}_\eta\in \ker(P_{F_\eta,-\de})$ such that
  \begin{align}
    \label{bartnik 1}
    \l \mathbb{V}_0 - \mathbb{U}_\eta \r_{H^2_{-\de}(\Si)}
    \lesssim C(F_0)  \l \mathbb{V}_0\r_{H^2_{-\de}(\Si)}\eta,
  \end{align}
  where we have used the continuous embeddings
  $H^2_{-\de}(\Si)\xhookrightarrow{}H^{\frac{3}{2}}(\dr\Si)\xhookrightarrow{}L^\infty(\dr\Si)$. Then
  using \eqref{IPP}, the fact that
  $\mathbb{V}_0^*\in \ker(P^*_{F_0,-1+\de})$, and that
  $\mathbb{U}_\eta\in \ker(P_{F_\eta,-\de})$, we compute
  \begin{align*}
     \frac{\l H_0 \r_{H^{\frac{3}{2}}(\dr\Si)}}{\eta}\int_\Si\pth{P(\mathbb{U}_\eta)\cdot \mathbb{V}_0^* - \mathbb{U}_\eta\cdot P^*(\mathbb{V}_0^*) }  =  \int_{\dr\Si} H_0 (\mathbb{U}_\eta)\cdot \mathbb{V}_0^* .
  \end{align*}
  Since $P(\mathbb{U}_\eta)=P^*(\mathbb{V}_0^*)=0$ we have proved that
  \begin{align*}
      \int_{\dr\Si} H_0 (\mathbb{U}_\eta)\cdot \mathbb{V}_0^* = 0.
  \end{align*}
  However \eqref{def h zero} and \eqref{bartnik 1} imply
  \begin{align*}
    \left| \int_{\dr\Si} H_0 (\mathbb{U}_\eta)\cdot \mathbb{V}_0^* - 1 \right| & \leq C C(F_0) \l H_0\r_{H^{\frac{3}{2}}(\dr\Si)} \l \mathbb{V}_0^* \r_{H^2_{-\de}(\Si)}   \l \mathbb{V}_0\r_{H^2_{-\de}(\Si)}\eta,
  \end{align*}
for some universal constant $C>0$. Choosing
  \begin{align*}
      \eta_1(F_0)\vcentcolon = \min\pth{ \eta'(F_0), \frac{1}{2C C(F_0) \l H_0\r_{H^{\frac{3}{2}}(\dr\Si)} \l \mathbb{V}_0^* \r_{H^2_{-\de}(\Si)}   \l \mathbb{V}_0\r_{H^2_{-\de}(\Si)}} }
  \end{align*}
  and taking $0<\eta\leq \eta_1(F_0)$ implies a contradiction. This
  proves that $\dim(\ker(P_{F_\eta,-\de})) < \dim(\ker(P_{F_0,-\de}))$
  and concludes the proof of the lemma, after setting
  $F_1\vcentcolon=F_\eta$.
\end{proof}

We can iterate the result of Lemma \ref{lem bartnik 2} to
obtain the following corollary.
\begin{corollary}
  \label{coro BC}
  Let $\de\in(0,1)$. Let $\mathcal{U}$ and $\mathcal{V}$ two open subsets of $\dr\Si$ with $\overline{\mathcal{U}}\cap \overline{\mathcal{V}}=\emptyset$. 
  There exists $F_\de\in \EE(\dr\Si)$ such that $F_{\de|_{\mathcal{V}}}=\mathrm{Id}$, $F_{\de|_{\mathcal{U}}}=0$ and
  \begin{align}
      \ker(P_{F_\de,-\de}) & =\{0\},\label{f de 1}
      \\ \ker(P^*_{F_\de,-1+\de}) & =\{0\}.\label{f de 2}
  \end{align}
\end{corollary}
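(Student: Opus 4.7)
The plan is to produce $F_\de$ by iteratively applying Lemma \ref{lem bartnik 2}, starting from an initial operator $F_0\in\EE(\dr\Si)$ that already realizes the prescribed boundary values on $\mathcal{U}$ and $\mathcal{V}$, and using the distinguished set $\mathcal{W}\vcentcolon=\mathcal{U}\cup\mathcal{V}$ to ensure that those prescribed values are preserved along the iteration.

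First I would build $F_0$ explicitly. Since $\overline{\mathcal{U}}\cap\overline{\mathcal{V}}=\emptyset$, one can pick smooth cutoff functions on $\dr\Si$ that are $1$ on $\overline{\mathcal{V}}$ and $0$ on $\overline{\mathcal{U}}$, multiply $\mathrm{Id}\in\EE(\dr\Si)$ by such a cutoff, and set $F_0$ to be the resulting element of $\EE(\dr\Si)$, so that $F_0|_\mathcal{U}=0$ and $F_0|_\mathcal{V}=\mathrm{Id}$. The key observation, which is what allows Lemma \ref{lem bartnik 2} to be invoked with $\mathcal{W}=\mathcal{U}\cup\mathcal{V}$, is that $\dr\Si\setminus(\mathcal{U}\cup\mathcal{V})$ contains a non-empty open set: indeed, since $\overline{\mathcal{U}}$ and $\overline{\mathcal{V}}$ are disjoint closed sets in the connected manifold $\dr\Si$, if $\mathcal{U}\cup\mathcal{V}$ were dense then $\dr\Si=\overline{\mathcal{U}}\cup\overline{\mathcal{V}}$ would be a disjoint union of non-empty closed sets, contradicting connectedness.

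Next I would iterate. By \Cref{prop fredholm}(ii), $\dim(\ker(P_{F_0,-\de}))$ is finite. If it is zero, then $F_\de\vcentcolon=F_0$ works (for the first condition \eqref{f de 1}). Otherwise, I apply \Cref{lem bartnik 2} with $\mathcal{W}=\mathcal{U}\cup\mathcal{V}$ to obtain $F_1\in\EE(\dr\Si)$ with $(F_1-F_0)|_\mathcal{W}=0$ and $\dim(\ker(P_{F_1,-\de}))<\dim(\ker(P_{F_0,-\de}))$. The first condition guarantees $F_1|_\mathcal{U}=0$ and $F_1|_\mathcal{V}=\mathrm{Id}$. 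Repeating this at most $\dim(\ker(P_{F_0,-\de}))$ times produces a sequence $F_0,F_1,\dots,F_N$ with $\dim(\ker(P_{F_N,-\de}))=0$ and $F_N|_\mathcal{U}=0$, $F_N|_\mathcal{V}=\mathrm{Id}$. Setting $F_\de\vcentcolon=F_N$ yields \eqref{f de 1}.

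Finally, for \eqref{f de 2}, I use that $-1<-\de<0$ and \Cref{prop fredholm}(ii), which ensures that $P_{F_\de,-\de}$ has Fredholm index zero. Hence $\ker(P_{F_\de,-\de})=\{0\}$ implies $\ker((P_{F_\de,-\de})^*)=\{0\}$. The characterization \eqref{kernel P^*} from \Cref{lem weird} identifies $\ker((P_{F_\de,-\de})^*)$ with $\ker(P^*_{F_\de,-1+\de})$ via the injective map $\mathbb{V}\mapsto(\mathbb{V},\mathbb{V}|_{\dr\Si})$, so $\ker(P^*_{F_\de,-1+\de})=\{0\}$, which is \eqref{f de 2}. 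I expect no serious obstacle beyond checking that $\mathcal{W}=\mathcal{U}\cup\mathcal{V}$ satisfies the hypothesis of \Cref{lem bartnik 2}, which is precisely the topological observation made in the first paragraph.
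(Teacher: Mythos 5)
Your proposal is correct and follows essentially the same route as the paper's proof: construct $F_0$ from a cutoff that is $0$ on $\overline{\mathcal{U}}$ and $1$ on $\overline{\mathcal{V}}$, iterate Lemma \ref{lem bartnik 2} with $\mathcal{W}=\mathcal{U}\cup\mathcal{V}$ to kill the kernel, and then deduce \eqref{f de 2} from \eqref{f de 1} via the Fredholm index zero property and the identification \eqref{kernel P^*}. The only (welcome) refinement is that you spell out the connectedness argument showing $\dr\Si\setminus(\mathcal{U}\cup\mathcal{V})$ has nonempty interior, where the paper simply asserts it.
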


\begin{proof}
First, note that \eqref{f de 1} implies \eqref{f de 2}. Indeed, from \eqref{kernel P^*}, we have that
  $\dim\pth{\ker\pth{P^*_{F_{\de},-1+\de}}}=\dim\pth{\ker\pth{\pth{P_{F_{\de},-\de}}^*}}$.
  In view of the fact that $P_{F_{\de},-\de}$ has Fredholm index zero
  from Proposition \ref{prop fredholm} this becomes
  \begin{align*}
      \dim\pth{\ker\pth{P^*_{F_{\de},-1+\de}}} = \dim\pth{\ker\pth{P_{F_{\de},-\de}}}.
  \end{align*}
Therefore, it only remains to construct $F_\de$ satisfying the requirements of the corollary and such that \eqref{f de 1} holds. Since $\overline{\mathcal{U}}\cap \overline{\mathcal{V}}=\emptyset$, we may consider $\chi_{b}$ a smooth function on $\dr\Si$ such that $\chi_{b|_{\mathcal{U}}}=0$ and $\chi_{b|_{\mathcal{V}}}=1$ and set $F_0\vcentcolon =\chi_{b}\mathrm{Id}$. Let $n_0\vcentcolon = \dim(\ker(P_{F_0,-\de}))$. If $n_0=0$, the lemma is proved. If $n_0>0$, we construct iteratively a finite family of
  matrices $(F_0,F_1,\dots,F_{n_*})$ in the following way:
  \begin{enumerate}
  \item We apply Lemma \ref{lem bartnik 2} to $F_0$ and $\mathcal{W}\vcentcolon =\mathcal{U}\cup \mathcal{V}$ (since $\overline{\mathcal{U}}\cap \overline{\mathcal{V}}=\emptyset$, $\dr\Si\setminus \mathcal{W}$ does indeed contain an open set), which
    gives the existence of $F_1\in \EE(\dr\Si)$ with $\dim(\ker(P_{F_1,-\de})) < n_0$ and
    $(F_1-F_0)_{|_{\mathcal{W}}}=0$.
  \item If $(F_0,F_1,\dots,F_j)$ is constructed, then we distinguish two cases:
    \begin{enumerate}
    \item If $\dim(\ker(P_{F_j,-\de}))=0$, then set $n_*=j$ and the
      iteration ends.
    \item Otherwise, $\dim(\ker(P_{F_j,-\de}))>0$, and Lemma \ref{lem
        bartnik 2} again implies the existence of $F_{j+1}\in \EE(\dr\Si)$ such that
      $\dim(\ker(P_{F_{j+1},-\de}))<\dim(\ker(P_{F_j,-\de}))$ and
      $(F_{j+1}-F_j)_{|_{\mathcal{W}}}=0$.      
    \end{enumerate}
  \end{enumerate}
  Since $\pth{\dim(\ker(P_{F_j,-\de}))}_{j}$ is a decreasing sequence of integers and since $\dim(\ker(P_{F_0,-\de}))=n_0$, we have that
  $n_{*}\le n_0$. We thus have a finite family
  $(F_0,F_1,\dots,F_{n_*})$ in $\EE(\dr\Si)$ with
  $\ker(P_{F_{n_*},-\de})=\{0\}$. Moreover, we can prove by induction that $(F_{n_*}-F_0)_{|_{\mathcal{U}\cup \mathcal{V}}}=0$. Since $F_{0|_{\mathcal{V}}}=\mathrm{Id}$ and $F_{0|_{\mathcal{U}}}=0$, this proves that $F_\de\vcentcolon = F_{n_*}$ does satisfy the requirements of the lemma.
\end{proof}

\subsection{Adapted harmonic polynomials}

We recall the definition of harmonic polynomials and vectorial
harmonic polynomials in Euclidean space.

\begin{definition}
  \label{def We}
  We identify the global coordinates of $\RRR^3$ and $\Si$ and consider the
  real spherical harmonics $Y_{j,\ell}$ for $j\geq 0$ and
  $-j\leq \ell \leq j$ with an arbitrary normalization.
  \begin{enumerate}[label=(\roman*)]
  \item For $j\geq 1$ and $-(j-1)\leq \ell \leq j-1$, we define scalar functions
    \begin{align*}
      w_{j,\ell} & \vcentcolon = r^{j-1} Y_{j-1,\ell},
    \end{align*}
    and if in addition $k=1,2,3$, we define vector fields
    \begin{align*}
      W_{j,\ell,k} & \vcentcolon = w_{j,\ell}\dr_k.
    \end{align*}
  \item For $j\geq 1$, $-j\leq \ell \leq j$ and $\a=0,1,2,3$, we define
    pairs consisting of a scalar function and a vector field
    \begin{equation*}
      \mathbb{W}_{j,\ell,\a}^{(e)} \vcentcolon = 
      \left\{
        \begin{aligned}
          &(w_{j,\ell},0), \quad \text{if $\a=0$,}
          \\& (0, W_{j,\ell,\a}), \quad \text{if $\a=1,2,3$,}
        \end{aligned}
      \right.
    \end{equation*}
  \end{enumerate}

\end{definition}

\begin{remark}
  \label{remark basis harmonic polynomials}
  Note that for each $q\in\mathbb{N}^*$ the family 
  \begin{align*}
    \pth{\mathbb{W}^{(e)}_{j,\ell,\a}}_{1\leq j \leq q, -(j-1)\leq \ell\leq j-1,\a=0,1,2,3}
  \end{align*}
  is a basis of homogeneous polynomials of degree $j-1$ with $0\leq j-1\leq q-1$
  moreover satisfying $\De_e\pth{\mathbb{W}^{(e)}_{j,\ell,\a}}=0$ where $\De_e$
  denotes here the Euclidean Laplacian.
\end{remark}

Using the harmonic polynomials of \Cref{def We}, we construct a family of harmonic polynomials adapted to $P_{F,\delta}$. From now on, $F_\de$ (for $\de\in(0,1)$) will always refer to the one constructed in Corollary \ref{coro BC}.
\begin{lemma}
  \label{lem harmonic polynomials}
  Let $j\geq 1$, $-(j-1)\leq \ell \leq j-1$, $\a=0,1,2,3$ and $\de\in(0,1)$. There
  exists $\widetilde{\mathbb{W}}_{j,\ell,\a}\in H^2_{j-2+\de}(\Si)$
  such that
  \begin{align*}
    \mathbb{W}_{j,\ell,\a} & \vcentcolon = \mathbb{W}^{(e)}_{j,\ell,\a} + \widetilde{\mathbb{W}}_{j,\ell,\a}
  \end{align*}
  satisfies
  \begin{equation*}
    \left\{
      \begin{aligned}
        P^*\pth{\mathbb{W}_{j,\ell,\a}} & = 0,\quad \text{on $\Si$,}
        \\ (B_\nu^*+F_\de^*)\pth{\mathbb{W}_{j,\ell,\a}} & = 0,\quad \text{on $\dr\Si$.}
      \end{aligned}
    \right.
  \end{equation*}
\end{lemma}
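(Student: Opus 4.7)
The natural approach is to seek $\widetilde{\mathbb{W}}_{j,\ell,\alpha}$ as the solution to the inhomogeneous boundary value problem
\begin{equation*}
P^*(\widetilde{\mathbb{W}}_{j,\ell,\alpha}) = - P^*(\mathbb{W}^{(e)}_{j,\ell,\alpha}) \text{ on } \Sigma, \qquad (B_\nu^* + F_\delta^*)(\widetilde{\mathbb{W}}_{j,\ell,\alpha}) = -(B_\nu^*+F_\delta^*)(\mathbb{W}^{(e)}_{j,\ell,\alpha}) \text{ on } \partial \Sigma,
\end{equation*}
and to produce this solution by invoking the surjectivity of $P^*_{F_\delta,\, j-2+\delta}$.

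First, one must verify that the source terms lie in the appropriate target spaces. Asymptotic flatness (Definition \ref{def AF}) gives $\bar g_{ij}-\delta_{ij}=O(r^{-1})$, the associated Christoffel symbols are $O(r^{-2})$, and $\bar\pi=O(r^{-2})$, with compatible control of all derivatives. Since $\mathbb{W}^{(e)}_{j,\ell,\alpha}$ is a homogeneous polynomial of degree $j-1$ annihilated by the Euclidean principal symbol of $P^*$, a direct term-by-term inspection of the formula for $P^*$ yields $P^*(\mathbb{W}^{(e)}_{j,\ell,\alpha}) = O(r^{j-4})$ at infinity (with analogous control on derivatives), hence $P^*(\mathbb{W}^{(e)}_{j,\ell,\alpha})\in L^2_{j-4+\delta}(\Sigma)$. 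Since $\partial\Sigma$ is compact, $\mathbb{W}^{(e)}_{j,\ell,\alpha}$, $\bar g$, $\bar\pi$ and $F_\delta$ are smooth there, so $(B_\nu^*+F_\delta^*)(\mathbb{W}^{(e)}_{j,\ell,\alpha})\in H^{1/2}(\partial\Sigma)$.

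Second, we show that $P^*_{F_\delta,\, j-2+\delta}$ is surjective. Since $0<\delta<1$ and $j\geq 1$, one has $j-2+\delta\notin\mathbb{Z}$, so Proposition \ref{prop fredholm} guarantees Fredholmness. By Lemma \ref{lem weird}, the kernel of $(P^*_{F_\delta,\, j-2+\delta})^*$ is isomorphic to $\ker(P_{F_\delta,\, -1-(j-2+\delta)})=\ker(P_{F_\delta,\, -j+1-\delta})$. For $j\geq 1$ the continuous inclusion $H^2_{-j+1-\delta}(\Sigma)\subset H^2_{-\delta}(\Sigma)$ yields $\ker(P_{F_\delta,\, -j+1-\delta})\subset \ker(P_{F_\delta,\, -\delta})$, and Corollary \ref{coro BC} gives $\ker(P_{F_\delta,\,-\delta})=\{0\}$. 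Hence the cokernel of $P^*_{F_\delta,\, j-2+\delta}$ is trivial and, combined with Fredholmness, this yields surjectivity.

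Applying surjectivity to the source data produced in the first step furnishes $\widetilde{\mathbb{W}}_{j,\ell,\alpha}\in H^2_{j-2+\delta}(\Sigma)$ solving the inhomogeneous problem, so that $\mathbb{W}_{j,\ell,\alpha}:=\mathbb{W}^{(e)}_{j,\ell,\alpha}+\widetilde{\mathbb{W}}_{j,\ell,\alpha}$ satisfies $P^*(\mathbb{W}_{j,\ell,\alpha})=0$ and $(B_\nu^*+F_\delta^*)(\mathbb{W}_{j,\ell,\alpha})=0$. The only real point of care is the decay count in the first step: one has to track how each lower-order term of $P^*$ (involving $\bar\pi$, $\tr_{\bar g}\bar\pi$, and the Christoffel correction to $\Delta_{\bar g}$ or to $\div_{\bar g}L_{\bar g}$) acts on the polynomially growing input, to confirm the asserted $r^{j-4}$ rate. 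This is a routine pointwise computation, and no further analytic difficulty arises.
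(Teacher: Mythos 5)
Your proposal follows essentially the same route as the paper: you reduce to an inhomogeneous boundary value problem for $\widetilde{\mathbb{W}}_{j,\ell,\alpha}$, verify that the source lies in the target space using the Euclidean harmonicity of $\mathbb{W}^{(e)}_{j,\ell,\alpha}$ and the $O(r^{-1})$ error in $P^{*}-\Delta_e$, and then establish surjectivity of $P^{*}_{F_\delta,\,j-2+\delta}$ via Proposition \ref{prop fredholm}, Lemma \ref{lem weird}, the inclusion $\ker(P_{F_\delta,\,1-j-\delta})\subset\ker(P_{F_\delta,\,-\delta})$ for $j\geq 1$, and Corollary \ref{coro BC}. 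The argument and the lemmas invoked match the paper's proof step by step; the only cosmetic imprecision is the phrase ``annihilated by the Euclidean principal symbol,'' where you mean annihilated by the Euclidean Laplacian.
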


\begin{proof}
  It suffices to solve for
  $\widetilde{\mathbb{W}}_{j,\ell,\a}\in H^2_{j-2+\delta}(\Sigma)$
  such that
  \begin{equation*}
    \left\{
      \begin{aligned}
        P^*\pth{\widetilde{\mathbb{W}}_{j,\ell,\a}} & = -P^*\pth{\mathbb{W}^{(e)}_{j,\ell,\a}},\quad \text{on $\Si$,}
        \\ (B_\nu^*+F_\de^*)\pth{\widetilde{\mathbb{W}}_{j,\ell,\a}} & = -(B_\nu^*+F_\de^*)\pth{\mathbb{W}^{(e)}_{j,\ell,\a}},\quad \text{on $\dr\Si$.}
      \end{aligned}
    \right.
  \end{equation*}
  Now, observe that $P^{*}$ is asymptotic to the Euclidean Laplacian in view of Definition \ref{def AF}. As a result, since
  $\mathbb{W}_{j,\ell,\alpha}^{(e)}\in H^4_{j-1+\delta}(\Sigma)$, we have that
  \begin{equation*}
    P^{*}\pth{\mathbb{W}^{(e)}_{j,\ell,\a}} \in H^2_{j-4+\de}(\Sigma),
  \end{equation*}   
  since $\De_e\pth{\mathbb{W}^{(e)}_{j,\ell,\a}}=0$ in view of \Cref{remark basis harmonic polynomials}. We can thus rewrite the above system for
  $\widetilde{\mathbb{W}}_{j,\ell,\a}$ as
  \begin{align*}
    P_{F_\de,j-2+\de}^*\pth{\widetilde{\mathbb{W}}_{j,\ell,\a}}=-\pth{ P^*\pth{\mathbb{W}^{(e)}_{j,\ell,\a}},(B^*_\nu+F_\de^*)\pth{\mathbb{W}^{(e)}_{j,\ell,\a}} }.
  \end{align*}
  Since $j\geq 1$ we have
  $\ker(P_{F_\de,1-j-\de})\subset \ker(P_{F_\de,-\de})=\{0\}$, where
  we used \eqref{f de 1}. According to \eqref{kernel P^**} this
  implies $\ker\pth{\pth{P^*_{F_\de,j-2+\de}}^*}=\{0\}$, and hence that
  $P^*_{F_\de,j-2+\de}$ is surjective (since $P^*_{F_\de,j-2+\de}$ is semi-Fredholm thanks to Proposition \ref{prop fredholm}). Therefore we can always solve
  for $\widetilde{\mathbb{W}}_{j,\ell,\a}$.
\end{proof}

Using the adapted harmonic polynomials constructed in \Cref{lem harmonic polynomials}, we are now able to specify the orthogonality conditions necessary to invert $P_{F_\de,-q-\delta}$. 

\begin{lemma}
  \label{lem image}
  Let $q\in\mathbb{N}^*$, $0<\de<1$ and let
  $\mathbb{Y}\in L^2_{-q-\de-2}(\Si)$. Then
  $(\mathbb{Y},0)\in \mathrm{im}(P_{F_\de,-q-\de})$ if and only if
  \begin{align*}
    \ps{\mathbb{W}_{j,\ell,\a},\mathbb{Y}}_{L^2(\Si)}=0 , \quad \text{for all $1\leq j\leq q$, $-(j-1)\leq \ell \leq j-1$ and $\a=0,1,2,3$}.
  \end{align*}
\end{lemma}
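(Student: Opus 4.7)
The plan is to apply the Fredholm alternative to $P_{F_\de, -q-\de}$, which is Fredholm by Proposition \ref{prop fredholm}: its image is closed, so $(\mathbb{Y},0)$ lies in it if and only if it pairs to zero with every element of $\ker\pth{(P_{F_\de,-q-\de})^*}$ under the $L^2(\Si)\times L^2(\dr\Si)$ duality. Lemma \ref{lem weird} identifies this kernel with the set of pairs $(\mathbb{V}, \mathbb{V}_{|_{\dr\Si}})$ for $\mathbb{V}\in\ker(P^*_{F_\de, q-1+\de})$, and pairing $(\mathbb{Y},0)$ against such an element collapses to $\ps{\mathbb{Y}, \mathbb{V}}_{L^2(\Si)}$ since the second component of $(\mathbb{Y},0)$ vanishes. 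The lemma therefore reduces to showing that the family $\{\mathbb{W}_{j,\ell,\a}\}_{1\le j\le q,\ -(j-1)\le\ell\le j-1,\ \a=0,1,2,3}$ spans $\ker(P^*_{F_\de, q-1+\de})$.

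The inclusion $\mathrm{span}\{\mathbb{W}_{j,\ell,\a}\}\subset \ker(P^*_{F_\de, q-1+\de})$ follows at once from Lemma \ref{lem harmonic polynomials}, using that $\mathbb{W}_{j,\ell,\a}\in H^2_{j-1+\de}(\Si)\subset H^2_{q-1+\de}(\Si)$ for $1\le j\le q$. Linear independence is also immediate: any nontrivial linear combination carries a non-vanishing leading polynomial $\sum c_{j,\ell,\a}\mathbb{W}^{(e)}_{j,\ell,\a}$ coming from the highest $j$ in which some coefficient fails to vanish, while the corrections $\widetilde{\mathbb{W}}_{j,\ell,\a}$ belong to strictly lower-weight spaces. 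The total cardinality of the family is $\sum_{j=1}^{q} 4(2j-1) = 4q^2$.

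The main task is therefore to establish the dimension count $\dim\ker(P^*_{F_\de, q-1+\de}) = 4q^2$, which we obtain by a Fredholm index computation. On one hand, $P_{F_\de, -q-\de}$ is injective, since its kernel is contained in $\ker(P_{F_\de, -\de})$ via the embedding $H^2_{-q-\de}(\Si)\subset H^2_{-\de}(\Si)$, and the latter is trivial by Corollary \ref{coro BC}; combined with Lemma \ref{lem weird}, this gives $\mathrm{ind}(P_{F_\de, -q-\de}) = -\dim \ker(P^*_{F_\de, q-1+\de})$. On the other hand, the classical index-jump formula for asymptotically Laplacian Fredholm operators on weighted Sobolev spaces (see \cite{bartnikMassAsymptoticallyFlat1986, mcowenBehaviorLaplacianWeighted1979} and the references invoked in the proof of Proposition \ref{prop fredholm}) says that as the weight moves from $-\de\in(-1,0)$ down to $-q-\de\in(-q-1,-q)$, the index drops by exactly $\sum_{k=0}^{q-1} 4(2k+1) = 4q^2$: each crossed integer contributes $4(2k+1)$, accounting for the degree-$k$ harmonic polynomials in each of the four components of $(u,X)$. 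Since $\mathrm{ind}(P_{F_\de, -\de}) = 0$ by Proposition \ref{prop fredholm}, this yields the required count and concludes the proof. The delicate point is thus the precise index-jump statement, which nevertheless follows from the scalar case by the asymptotic decoupling of $P$ into scalar and vectorial Laplace--Beltrami operators on the components of $(u,X)$.
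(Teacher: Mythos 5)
Your proof is correct but takes a genuinely different route in the key step. Both you and the paper reduce, via the Fredholm alternative and Lemma \ref{lem weird}, to showing that $\mathrm{span}\{\mathbb{W}_{j,\ell,\a}\}_{1\le j\le q}$ coincides with $\ker(P^*_{F_\de,q-1+\de})$, and the inclusion $\subset$ and the linear independence are handled the same way in both. Where you diverge is in the reverse inclusion: the paper proves spanning directly by induction on $q$, using Bartnik's asymptotic expansion theorem (Theorem 1.17 in \cite{bartnikMassAsymptoticallyFlat1986}) to peel off, from any element of $\ker(P^*_{F_\de,q-1+\de})$, its leading harmonic-polynomial part and then conclude by the base case $\ker(P^*_{F_\de,-1+\de})=\{0\}$ from \eqref{f de 2}. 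You instead establish the dimension count $\dim\ker(P^*_{F_\de,q-1+\de})=4q^2$ via a Fredholm index-jump argument, and then conclude by comparing dimensions. Your count is arithmetically correct ($\mathrm{ind}(P_{F_\de,-\de})=0$, the kernel stays trivial as the weight decreases, and crossing the $q$ integers $-1,\dots,-q$ contributes $\sum_{k=0}^{q-1}4(2k+1)=4q^2$ to the cokernel), and the conclusion follows. The trade-off: the paper's induction is self-contained given what it has already cited (Bartnik's Theorem 1.17 is invoked explicitly), whereas your argument leans on the index-jump formula for asymptotically Laplacian boundary value problems on manifolds with compact boundary, a result which is classical and does follow from excision at infinity plus the $\mathbb{R}^3$ computation, but which is not stated in the form you need in the references cited by Proposition \ref{prop fredholm}; you would need to supply or cite it more precisely to make the argument airtight. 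Conversely, your route gives the dimension of the cokernel as a byproduct, which the paper obtains only implicitly via linear independence of the adapted polynomials.
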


\begin{remark}\label{remark 1.17}
In the proof of this lemma below, we will apply Theorem 1.17 from \cite{Bartnik1986}. With the notations of this paper we have $k^{-}(p+\de)=p$ for any $p\in\mathbb{N}$ and $\de\in(0,1)$ since in dimension $3$ the set of so-called exceptional values is $\mathbb{Z}$.
\end{remark}

\begin{proof}
  We start by proving that
  \begin{align}\label{description noyau}
    \ker\pth{P^*_{F_\de,q-1+\de}}=\mathrm{span}\big\{\mathbb{W}_{j,\ell,\a}, \text{$1\leq j\leq q$, $-(j-1)\leq \ell \leq j-1$ and $\a=0,1,2,3$}\big\},
  \end{align}
  for all $q\geq 1$. The inclusion from right to left is immediate by
  definition of $\mathbb{W}_{j,\ell,\a}$ in the Lemma \ref{lem
    harmonic polynomials} and the fact that $j\leq q$. For the
  inclusion from left to right we proceed by induction on $q\geq 1$:
\begin{itemize}
\item For $q=1$, let $\mathbb{W}\in
    \ker\pth{P^*_{F_\de,\de}}$. Since $P^*$ is asymptotic to the Euclidean Laplacian $\De$,
    Theorem 1.17 in \cite{Bartnik1986},
    Remarks \ref{remark basis harmonic polynomials} and \ref{remark 1.17} imply that there
    exists scalars $\la^\a$ such that
    $\mathbb{W}-\la^\a \mathbb{W}^{(e)}_{1,0,\a}\in
    H^2_{-1+\de}(\Si)$. Lemma \ref{lem harmonic polynomials} thus implies
    that
    $\mathbb{W}-\la^\a \mathbb{W}_{1,0,\a}\in
    \ker\pth{P^*_{F_\de,-1+\de}}$. Thanks to \eqref{f de 2}, we obtain
    $\mathbb{W}=\la^\a \mathbb{W}_{1,0,\a}$. This proves
    \eqref{description noyau} for $q=1$.
\item Assume that we have
    \begin{equation}
    \label{induction}
     \hspace{1cm} \ker\pth{P^*_{F_\de,q-1+\de}}=\mathrm{span}\big\{\mathbb{W}_{j,\ell,\a}, \text{$1\leq j\leq q$, $-(j-1)\leq \ell \leq j-1$ and $\a=0,1,2,3$}\big\}
    \end{equation}
    for some $q\geq 1$ and let
    $\mathbb{W}\in \ker\pth{P^*_{F_\de,q+\de}}$. From Theorem 1.17 in
    \cite{Bartnik1986}, Remarks \ref{remark
      basis harmonic polynomials} and \ref{remark 1.17} we again obtain the existence of
    scalars $\la^{\ell,\a}$ such that
    $\mathbb{W}-\la^{\ell,\a}\mathbb{W}^{(e)}_{q+1,\ell,\a}\in
    H^2_{q-1+\de}(\Si)$. Lemma \ref{lem harmonic polynomials} thus
    implies that
    $\mathbb{W}-\la^{\ell,\a}\mathbb{W}_{q+1,\ell,\a}\in
    \ker\pth{P^*_{F_\de,q-1+\de}}$. From \eqref{induction} this gives
    \begin{align*}
      \mathbb{W}-\la^{\ell,\a}\mathbb{W}_{q+1,\ell,\a}\in \mathrm{span}\big\{\mathbb{W}_{j,\ell,\a}, \text{$1\leq j\leq q$, $-(j-1)\leq \ell \leq j-1$ and $\a=0,1,2,3$}\big\}
    \end{align*}
    and thus 
    \begin{align*}
      \mathbb{W} \in \mathrm{span}\big\{\mathbb{W}_{j,\ell,\a}, \text{$1\leq j\leq q+1$, $-(j-1)\leq \ell \leq j-1$ and $\a=0,1,2,3$}\big\}.
    \end{align*}
\end{itemize}
    This concludes the proof of \eqref{description noyau}. Now, by
    definition of the adjoint and the semi-Fredholm property of $P_{F_\de,-q-\de}$ (see Proposition \ref{prop fredholm}),
    $(\mathbb{Y},0)\in \mathrm{im}(P_{F_\de,-q-\de})$ if and only if
    $\ps{\mathbb{Y},\mathbb{V}}_{L^2(\Si)}=0$ for all $\mathbb{V}$
    such that $(\mathbb{V},\mathbbm{h})\in \ker((P_{F_\de,-q-\de})^*)$
    for some $\mathbbm{h}$. Thanks to \eqref{kernel P^*} we obtain
  \begin{align*}
    (\mathbb{Y},0)\in \mathrm{im}(P_{F_\de,-q-\de}) &\iff \ps{\mathbb{Y},\mathbb{V}}_{L^2(\Si)}=0, \quad \text{for all $\mathbb{V}\in \ker\pth{P^*_{F_\de,q-1+\de}}$}.
  \end{align*}
  Using \eqref{description noyau} concludes the proof of \Cref{lem image}.
\end{proof}

\subsection{Conclusion}

We summarize the properties of the operator $P$ in \eqref{def P} proved so far.
\begin{proposition}
  \label{prop P final}
    Let $q\in\mathbb{N}^*$ and $0<\de<1$. There exists $F_\de\in \EE(\dr\Si)$ and a family 
    \begin{equation}
      \label{family W}
        (\mathbb{W}_{j,\ell,\a})_{1\leq j\leq q, -(j-1)\leq \ell \leq j-1,\a=0,1,2,3}    
    \end{equation}
    with the following properties
    \begin{enumerate}[label=(\roman*)]
    \item there exists $\mathcal{U}$ and $\mathcal{V}$ open subsets of $\dr\Si$ such that $F_{\de|_{\mathcal{V}}}=\mathrm{Id}$ and $F_{\de|_{\mathcal{U}}}=0$,
    \item the family \eqref{family W} is linearly independent and each
      $\mathbb{W}_{j,\ell,\a}$ satisfies the estimate
      $\left| \mathbb{W}_{j,\ell,\a} \right|\lesssim r^{j-1}$ and the
      system
      \begin{equation*}
        \left\{
          \begin{aligned}
            P^*\pth{\mathbb{W}_{j,\ell,\a}} & = 0,\quad \text{on $\Si$,}
            \\ (B_\nu^*+F_\de^*)\pth{\mathbb{W}_{j,\ell,\a}} & = 0,\quad \text{on $\dr\Si$,}
          \end{aligned}
        \right.
      \end{equation*}
    \item if $\mathbb{Y}\in L^2_{-q-\de-2}(\Si)$ satisfies
      \begin{align}\label{condition Y}
        \ps{\mathbb{W}_{j,\ell,\a},\mathbb{Y}}_{L^2(\Si)}=0 , \quad \text{for all $1\leq j\leq q$, $-(j-1)\leq \ell \leq j-1$ and $\a=0,1,2,3$},
      \end{align}
      then there exists a unique $\mathbb{U}\in H^2_{-q-\de}(\Si)$
      with
      $\l \mathbb{U}\r_{H^2_{-q-\de}(\Si)}\lesssim \l
      \mathbb{Y}\r_{L^2_{-q-\de-2}(\Si)}$ such that
      \begin{equation*}
        \left\{
          \begin{aligned}
            P\pth{\mathbb{U}} & = \mathbb{Y},\quad \text{on $\Si$,}
            \\ (B_\nu+F_\de)\pth{\mathbb{U}} & = 0,\quad \text{on $\dr\Si$.}
          \end{aligned}
        \right.
      \end{equation*}
    \end{enumerate}
\end{proposition}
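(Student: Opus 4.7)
This proposition is essentially a collation of the machinery developed earlier in this section, so the plan is to assemble the pieces rather than produce new ones. First, I would apply Corollary \ref{coro BC} with the chosen disjoint open subsets $\mathcal{U}$ and $\mathcal{V}$ of $\dr\Si$ to obtain $F_\de\in\EE(\dr\Si)$ with $F_{\de|_{\mathcal{V}}}=\mathrm{Id}$, $F_{\de|_{\mathcal{U}}}=0$, and simultaneously $\ker(P_{F_\de,-\de})=\{0\}$ and $\ker(P^*_{F_\de,-1+\de})=\{0\}$. This immediately settles (i) and also provides the two triviality statements on which everything else depends.

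For (ii), Lemma \ref{lem harmonic polynomials} directly produces the family
\[
\mathbb{W}_{j,\ell,\a}=\mathbb{W}^{(e)}_{j,\ell,\a}+\widetilde{\mathbb{W}}_{j,\ell,\a},\qquad \widetilde{\mathbb{W}}_{j,\ell,\a}\in H^2_{j-2+\de}(\Si),
\]
along with the PDE and boundary identities. The pointwise bound follows from the continuous embedding $H^2_{j-2+\de}(\Si)\hookrightarrow C^0_{j-2+\de}(\Si)$ of Lemma \ref{lem embedding}, which gives $|\widetilde{\mathbb{W}}_{j,\ell,\a}|\lesssim (1+r)^{j-2+\de}\lesssim (1+r)^{j-1}$ using $\de<1$, combined with $|\mathbb{W}^{(e)}_{j,\ell,\a}|\lesssim r^{j-1}$. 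For linear independence, a vanishing combination $\sum \la^{j,\ell,\a}\mathbb{W}_{j,\ell,\a}=0$ forces the polynomial part $\sum \la^{j,\ell,\a}\mathbb{W}^{(e)}_{j,\ell,\a}$ to vanish, since the correctors $\widetilde{\mathbb{W}}_{j,\ell,\a}$ are of strictly lower order (again by $\de<1$); Remark \ref{remark basis harmonic polynomials} then forces each $\la^{j,\ell,\a}=0$.

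For (iii), the orthogonality hypothesis \eqref{condition Y} is exactly the one appearing in Lemma \ref{lem image}, which therefore yields existence of $\mathbb{U}\in H^2_{-q-\de}(\Si)$ satisfying $P(\mathbb{U})=\mathbb{Y}$ on $\Si$ and $(B_\nu+F_\de)(\mathbb{U})=0$ on $\dr\Si$. Uniqueness is a consequence of the inclusion $\ker(P_{F_\de,-q-\de})\subset\ker(P_{F_\de,-\de})=\{0\}$, valid for $q\geq 1$ since then $-q-\de\leq -\de$ and so $H^2_{-q-\de}(\Si)\subset H^2_{-\de}(\Si)$. Finally, for the quantitative estimate, Proposition \ref{prop fredholm} gives that $P_{F_\de,-q-\de}$ has closed range, so the restriction of $P_{F_\de,-q-\de}$ to the (closed) subspace of pairs $(\mathbb{Y},0)$ satisfying \eqref{condition Y} is a continuous linear bijection onto $H^2_{-q-\de}(\Si)$ (using the trivial kernel just established); the bounded inverse theorem then delivers $\l \mathbb{U}\r_{H^2_{-q-\de}(\Si)}\lesssim \l \mathbb{Y}\r_{L^2_{-q-\de-2}(\Si)}$.

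Since all the analytical work has been carried out in Lemmas \ref{lem bartnik}--\ref{lem image}, I do not expect any genuine obstacle here; the only subtlety is to be careful that the hypothesis $\de\in(0,1)$ is used in two distinct places, namely to guarantee that the correctors $\widetilde{\mathbb{W}}_{j,\ell,\a}$ are strictly subleading (which drives both the pointwise bound and the linear independence in (ii)) and to keep $-q-\de\notin\mathbb{Z}$ so that Fredholmness of $P_{F_\de,-q-\de}$ applies.
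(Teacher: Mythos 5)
Your argument follows the paper's proof essentially line by line: Corollary \ref{coro BC} produces $F_\de$ with the boundary properties and trivial kernels, Lemma \ref{lem harmonic polynomials} produces the family $\mathbb{W}_{j,\ell,\a}$, the pointwise bound and linear independence follow from $\widetilde{\mathbb{W}}_{j,\ell,\a}\in H^2_{j-2+\de}(\Si)$ being strictly subleading, and existence/uniqueness in (iii) are exactly Lemma \ref{lem image} together with $\ker(P_{F_\de,-q-\de})\subset\ker(P_{F_\de,-\de})=\{0\}$.

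The one place you diverge is the quantitative estimate, where the paper simply cites (1.31) in \cite{bartnikMassAsymptoticallyFlat1986} while you derive it from the open mapping theorem; that is a legitimate and self-contained alternative, but your formulation of it is garbled. One cannot restrict the operator $P_{F_\de,-q-\de}$ to a subspace of its \emph{codomain}, and the operator is not a bijection \emph{onto} $H^2_{-q-\de}(\Si)$ — it maps \emph{from} that space. The correct statement is: $P_{F_\de,-q-\de}$ is injective with closed range (Fredholm), hence a topological isomorphism from $H^2_{-q-\de}(\Si)$ onto its range inside $L^2_{-q-\de-2}(\Si)\times H^{1/2}(\dr\Si)$; since $(\mathbb{Y},0)$ lies in the range by Lemma \ref{lem image}, applying the bounded inverse to $(\mathbb{Y},0)$ gives $\l\mathbb{U}\r_{H^2_{-q-\de}(\Si)}\lesssim\l(\mathbb{Y},0)\r=\l\mathbb{Y}\r_{L^2_{-q-\de-2}(\Si)}$.
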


\begin{proof}
  We consider $F_\de$ as constructed in
  Corollary \ref{coro BC}. The family \eqref{family W} has been
  constructed in Lemma \ref{lem harmonic polynomials} and its linear
  independence follows from the linear independence of the family
  \begin{align*}
    \pth{\mathbb{W}^{(e)}_{j,\ell,\a}}_{1\leq j\leq q, -(j-1)\leq \ell \leq j-1,\a=0,1,2,3} 
  \end{align*}
  and the asymptotics
  $\mathbb{W}_{j,\ell,\a}-\mathbb{W}^{(e)}_{j,\ell,\a}=\GO{r^{j-2+\de}}$
  at infinity. If $\mathbb{Y}\in L^2_{-q-\de-2}(\Si)$ satisfies
  \eqref{condition Y} then Lemma \ref{lem image} implies that
  $(\mathbb{Y},0)\in \mathrm{im}(P_{F_\de,-q-\de})$, so that there
  exists $\mathbb{U}\in H^2_{-q-\de}(\Si)$ solving
  $P_{F_\de,-q-\de}(\mathbb{U})=(\mathbb{Y},0)$. Uniqueness follows
  from the choice of $F_\de$ (Corollary \ref{coro BC}) which implies
  that $\ker(P_{F_\de,-q-\de})\subset \ker(P_{F_\de,-\de})=\{0\}$. Finally, the
  estimate
  $\l \mathbb{U}\r_{H^2_{-q-\de}(\Si)}\lesssim \l
  \mathbb{Y}\r_{L^2_{-q-\de-2}(\Si)}$ follows from (1.31) in
  \cite{Bartnik1986}.
\end{proof}

\section{Proof of the main theorem}
\label{section proof main result}

This section is devoted to the proof of Theorem \ref{theo general}. 

\subsection{Conformal formulation of the constraint equations}
\label{section conformal formulation}

Let $(\Si,\bar{g},\bar{\pi})$ an asymptotically flat initial data set satisfying the regularity assumptions of Theorem \ref{theo general}, and let $(\wc{g},\wc{\pi})\in H^2_{-q-\de}(\Si)\times H^1_{-q-\de-1}(\Si)$ for some $q\in\mathbb{N}^*$ and $0<\de<1$. For $\p=(\breve{g},\breve{\pi})$ compactly supported we define 
\begin{equation}
  \label{def g hat pi hat}
  (\hat{g},\hat{\pi})(\p)  \vcentcolon = (\bar{g} + \wc{g} , \bar{\pi} + \wc{\pi}) + \p.
\end{equation}
Following \cite{Corvino2006}, we look for
solutions of the constraint equations under the form
\begin{equation}
  \label{ansatz}
  (g,\pi) = \pth{ (1+\wc{u})^4 \hat{g}(\p), (1+\wc{u})^2\pth{ \hat{\pi}(\p) + L_{\hat{g}(\p)}\wc{X}}}.
\end{equation}

\begin{lemma}
  \label{lem conformal formulation}
  If $(g,\pi)$ is defined by \eqref{ansatz} then the constraint
  equations $\Phi(g,\pi)=0$ rewrite as
  \begin{align}\label{eq u X}
    P\pth{\wc{\mathbb{U}}} & = D\Phi[\bar{g},\bar{\pi}]((\wc{g},\wc{\pi})+\p) + \RR\pth{\p,\wc{\mathbb{U}}}
  \end{align}
  where the operator $P$ is defined in \eqref{def P},
  $\wc{\mathbb{U}}=\pth{\wc{u},\wc{X}}$ and the remainder is defined
  by
  \begin{equation*}
    \begin{aligned}
      \RR\pth{\p,\wc{\mathbb{U}}} & \vcentcolon = \bigg(- Q_\HH[\bar{g},\bar{\pi}](\wc{g} + \breve{g}, \wc{\pi} + \breve{\pi})
      \\&\hspace{1cm} -\wc{u}\pth{D\HH[\bar{g},\bar{\pi}](\wc{g} + \breve{g}, \wc{\pi} + \breve{\pi}) + Q_\HH[\bar{g},\bar{\pi}](\wc{g} + \breve{g}, \wc{\pi} + \breve{\pi})}
      \\&\hspace{1cm} - (1+\wc{u}) Q_\HH[\hat{g}(\p),\hat{\pi}(\p)]\pth{0,L_{\hat{g}(\p)}\wc{X}} - \wc{u} \pth{  - 2\hat{\pi}(\p)^{ij} \LL_{\wc{X}} \hat{g}(\p)_{ij} + \tr_{\hat{g}(\p)}\hat{\pi}(\p) \div_{\hat{g}(\p)}\wc{X} }
      \\&\hspace{1cm} + 8\pth{\De_{\hat{g}(\p)}-\De_{\bar{g}}}\wc{u}   + 2\pth{\hat{\pi}(\p)^{ij} \LL_{\wc{X}} \hat{g}(\p)_{ij}- \bar{\pi}^{ij} \LL_{\wc{X}} \bar{g}_{ij}} - \pth{\tr_{\hat{g}(\p)}\hat{\pi}(\p) \div_{\hat{g}(\p)}-\tr_{\bar{g}}\bar{\pi} \div_{\bar{g}}}\wc{X},
      \\&\hspace{1cm} 2 Q_\MM[\bar{g},\bar{\pi}](\wc{g} + \breve{g}, \wc{\pi} + \breve{\pi})_i
      \\&\hspace{1cm} +4(1+\wc{u})^{-1} \hat{g}(\p)^{j\ell} \pth{2 L_{\hat{g}(\p)}\wc{X}_{i\ell} \dr_j\wc{u} - L_{\hat{g}(\p)}\wc{X}_{j\ell} \dr_i \wc{u}}
      \\&\hspace{1cm} + 4 ((1+\wc{u})^{-1}-1) \hat{g}(\p)^{j\ell} \pth{2  \hat{\pi}(\p)_{i\ell} \dr_j\wc{u}  - \hat{\pi}(\p)_{j\ell}  \dr_i \wc{u}}
      \\&\hspace{1cm} + 2\pth{\div_{\hat{g}(\p)}L_{\hat{g}(\p)}-\div_{\bar{g}}L_{\bar{g}}}\wc{X}_i  + 8 \pth{ \hat{g}(\p)^{j\ell} \hat{\pi}(\p)_{i\ell}   -   \bar{g}^{j\ell} \bar{\pi}_{i\ell}} \dr_j\wc{u}   - 4 \pth{  \tr_{\hat{g}(\p)}\hat{\pi}(\p) -  \tr_{\bar{g}}\bar{\pi}}  \dr_i \wc{u} \bigg),
    \end{aligned}
  \end{equation*}
  where $Q_\HH[\bar{g},\bar{\pi}]$ and $Q_\MM[\bar{g},\bar{\pi}]$ are defined in \eqref{expansion H}-\eqref{expansion M}.
\end{lemma}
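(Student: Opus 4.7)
The plan is to substitute the conformal ansatz \eqref{ansatz} into the constraints $\HH(g,\pi)=\MM(g,\pi)=0$ and reorganize into \eqref{eq u X}. Setting $\tilde{\pi} := \hat{\pi}(\p) + L_{\hat{g}(\p)}\wc{X}$, the classical conformal identity
\[
R((1+\wc{u})^4\hat{g}(\p)) = (1+\wc{u})^{-4}R(\hat{g}(\p)) - 8(1+\wc{u})^{-5}\Delta_{\hat{g}(\p)}(1+\wc{u}),
\]
together with the rescalings $(\tr_g\pi)^2 = (1+\wc{u})^{-4}(\tr_{\hat{g}(\p)}\tilde{\pi})^2$ and $|\pi|^2_g = (1+\wc{u})^{-4}|\tilde{\pi}|^2_{\hat{g}(\p)}$, recasts the Hamiltonian constraint, after multiplication by $(1+\wc{u})^5$, as
\[
(1+\wc{u})\HH(\hat{g}(\p),\tilde{\pi}) - 8\Delta_{\hat{g}(\p)}\wc{u} = 0.
\]
A parallel computation based on the transformation of Christoffel symbols under $g=\phi^4\hat{g}(\p)$ produces the momentum-constraint analogue (here already multiplied through so that the prefactor of the divergence is that of $\Phi_2 = 2\MM$)
\[
2(\div_{\hat{g}(\p)}\tilde{\pi})_i + 8(1+\wc{u})^{-1}\hat{g}(\p)^{j\ell}\tilde{\pi}_{i\ell}\partial_j\wc{u} - 4(1+\wc{u})^{-1}(\tr_{\hat{g}(\p)}\tilde{\pi})\partial_i\wc{u} = 0,
\]
which is precisely where the $(1+\wc{u})^{-1}$-prefactors in the stated $\RR$ originate.

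Next I would expand $\HH(\hat{g}(\p),\tilde{\pi})$ and $\MM(\hat{g}(\p),\tilde{\pi})$ around $(\bar{g},\bar{\pi})$ in two stages that mirror the structure of $\RR$. First, peel off the $L_{\hat{g}(\p)}\wc{X}$-piece by applying \eqref{expansion H}-\eqref{expansion M} at $(\hat{g}(\p),\hat{\pi}(\p))$ with perturbation $(0,L_{\hat{g}(\p)}\wc{X})$, producing a linear part $D\HH[\hat{g}(\p),\hat{\pi}(\p)](0,L_{\hat{g}(\p)}\wc{X})$ and a quadratic remainder $Q_\HH[\hat{g}(\p),\hat{\pi}(\p)](0,L_{\hat{g}(\p)}\wc{X})$ (and analogously for $\MM$). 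Then expand $\HH(\hat{g}(\p),\hat{\pi}(\p))$ and $\MM(\hat{g}(\p),\hat{\pi}(\p))$ at $(\bar{g},\bar{\pi})$ with perturbation $(\wc{g}+\breve{g},\wc{\pi}+\breve{\pi})$, using crucially the hypothesis that $(\bar{g},\bar{\pi})$ solves \eqref{constraint equations 2} to kill the zeroth-order terms $\HH(\bar{g},\bar{\pi})$ and $\MM(\bar{g},\bar{\pi})$. The discrepancy between operators at $\hat{g}(\p)$ versus $\bar{g}$ is absorbed by the trivial splittings $\Delta_{\hat{g}(\p)} = \Delta_{\bar{g}} + (\Delta_{\hat{g}(\p)}-\Delta_{\bar{g}})$, $\div_{\hat{g}(\p)}L_{\hat{g}(\p)} = \div_{\bar{g}}L_{\bar{g}} + (\div_{\hat{g}(\p)}L_{\hat{g}(\p)}-\div_{\bar{g}}L_{\bar{g}})$, and similar decompositions for the $\tilde{\pi}$-contractions with $\partial\wc{u}$.

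Finally I collect terms. Applying \eqref{DH} with $(h,\varpi)=(0,L_{\bar{g}}\wc{X})$ and using $\tr_{\bar{g}}L_{\bar{g}}\wc{X} = -\div_{\bar{g}}\wc{X}$ yields
\[
D\HH[\bar{g},\bar{\pi}](0,L_{\bar{g}}\wc{X}) = \tr_{\bar{g}}\bar{\pi}\,\div_{\bar{g}}\wc{X} - 2\bar{\pi}^{ij}\LL_{\wc{X}}\bar{g}_{ij},
\]
so that combined with the $-8\Delta_{\bar{g}}\wc{u}$ extracted in the first step one recovers exactly the first component of $-P(\wc{\mathbb{U}})$ (the sign matching $\Phi_1 = -\HH$). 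Analogously \eqref{DM} gives $D\MM[\bar{g},\bar{\pi}](0,L_{\bar{g}}\wc{X}) = \div_{\bar{g}}L_{\bar{g}}\wc{X}$, which combined with the $\partial\wc{u}$-terms in the conformal momentum identity and with the decomposition $\tilde{\pi}=\hat{\pi}(\p)+L_{\hat{g}(\p)}\wc{X}$ reconstructs the second component of $-P(\wc{\mathbb{U}})$. The remaining linear contribution in $(\wc{g}+\breve{g},\wc{\pi}+\breve{\pi})$ is immediately $-D\Phi[\bar{g},\bar{\pi}]((\wc{g},\wc{\pi})+\p)$, and, after moving these leading terms to the left-hand side of \eqref{eq u X}, everything else — the $Q_\HH$, $Q_\MM$ contributions, the $\wc{u}$-products, the operator-difference remainders, and the residuals $D\HH[\hat{g}(\p),\hat{\pi}(\p)](0,L_{\hat{g}(\p)}\wc{X}) - D\HH[\bar{g},\bar{\pi}](0,L_{\bar{g}}\wc{X})$ and its momentum analogue — assembles into the explicit $\RR(\p,\wc{\mathbb{U}})$ displayed in the lemma. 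The only difficulty is the meticulous bookkeeping needed to match the precise form of $\RR$; no genuine analytic obstacle arises, since every expansion used is algebraic and terminates at the quadratic order built into the constraint operator.
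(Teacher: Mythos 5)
Your proposal follows the paper's proof essentially line by line: start from the Corvino--Schoen conformal identities for $\HH$ and $\MM$ under $g=(1+\wc{u})^4\hat{g}(\p)$, expand in two stages by first peeling off the $(0,L_{\hat{g}(\p)}\wc{X})$-perturbation at $(\hat{g}(\p),\hat{\pi}(\p))$ and then expanding about $(\bar{g},\bar{\pi})$ using $\Phi(\bar{g},\bar{\pi})=0$, split the differential operators at $\hat{g}(\p)$ against those at $\bar{g}$, and absorb what remains into $\RR(\p,\wc{\mathbb{U}})$. (One minor prose slip: the combination $D\HH[\bar{g},\bar{\pi}](0,L_{\bar{g}}\wc{X})-8\Delta_{\bar{g}}\wc{u}$ is exactly the first component of $+P(\wc{\mathbb{U}})$, not $-P(\wc{\mathbb{U}})$ --- the sign flip $\Phi_1=-\HH$ is already built into \eqref{def P} --- but your explicit formulas and the rearrangement into \eqref{eq u X} are correct.)
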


\begin{proof}
  In the proof of Theorem 1 in
  \cite{Corvino2006} we find the following
  formulas:
  \begin{align*}
    u^5\HH\pth{ u^4 \hat{g}(\p), u^2\pth{ \hat{\pi}(\p) + L_{\hat{g}(\p)}X}} & =  -8\De_{\hat{g}(\p)} u + u \HH\pth{\hat{g}(\p),\hat{\pi}(\p) + L_{\hat{g}(\p)}X } 
  \end{align*}
  and
  \begin{align*}
    &u^2\MM\pth{ u^4 \hat{g}(\p), u^2\pth{ \hat{\pi}(\p) + L_{\hat{g}(\p)}X}}_i 
    \\&\qquad  =   \div_{\hat{g}(\p)}\pth{\hat{\pi}(\p) + L_{\hat{g}(\p)}X}_i + 2u^{-1} \hat{g}(\p)^{j\ell} \pth{2  \pth{\hat{\pi}(\p) + L_{\hat{g}(\p)}X}_{i\ell} \dr_ju -  \pth{\hat{\pi}(\p) + L_{\hat{g}(\p)}X}_{j\ell} \dr_i u } . 
  \end{align*}
  On the one hand, thanks to \eqref{expansion H} and \eqref{DH}, we can compute
  $D\HH[\hat{g}(\p),\hat{\pi}(\p)](0,L_{\hat{g}}X)$ and rewrite the
  equation
  \begin{align*}
    \HH\pth{ u^4 \hat{g}(\p), u^2\pth{ \hat{\pi}(\p) + L_{\hat{g}(\p)}X}} = 0
  \end{align*}
  as
  \begin{align*}
    -8\De_{\hat{g}(\p)} u   - 2\hat{\pi}(\p)^{ij} \LL_X \hat{g}(\p)_{ij}& + \tr_{\hat{g}(\p)}\hat{\pi}(\p) \div_{\hat{g}(\p)}X  
    \\& = - \HH(\hat{g}(\p),\hat{\pi}(\p)) 
    \\&\quad -(u-1)\HH(\hat{g}(\p),\hat{\pi}(\p)) - u Q_\HH[\hat{g}(\p),\hat{\pi}(\p)](0,L_{\hat{g}(\p)}X)
    \\&\quad - (u-1) \pth{  - 2\hat{\pi}(\p)^{ij} \LL_X \hat{g}(\p)_{ij} + \tr_{\hat{g}(\p)}\hat{\pi}(\p) \div_{\hat{g}(\p)}X }.
  \end{align*}
  Using now the definition \eqref{def g hat pi hat}, \eqref{expansion H} and $\HH(\bar{g},\bar{\pi})=0$, we rewrite this equation as
  \begin{align*}
    -8\De_{\bar{g}} u   - 2\bar{\pi}^{ij} \LL_X \bar{g}_{ij} + \tr_{\bar{g}}\bar{\pi} \div_{\bar{g}}X   & = - D\HH[\bar{g},\bar{\pi}](\wc{g} + \breve{g}, \wc{\pi} + \breve{\pi}) 
    \\&\quad - Q_\HH[\bar{g},\bar{\pi}](\wc{g} + \breve{g}, \wc{\pi} + \breve{\pi})
    \\&\quad -(u-1)\pth{D\HH[\bar{g},\bar{\pi}](\wc{g} + \breve{g}, \wc{\pi} + \breve{\pi}) + Q_\HH[\bar{g},\bar{\pi}](\wc{g} + \breve{g}, \wc{\pi} + \breve{\pi})}
    \\&\quad - u Q_\HH[\hat{g}(\p),\hat{\pi}(\p)](0,L_{\hat{g}(\p)}X)
    \\&\quad - (u-1) \pth{  - 2\hat{\pi}(\p)^{ij} \LL_X \hat{g}(\p)_{ij} + \tr_{\hat{g}(\p)}\hat{\pi}(\p) \div_{\hat{g}(\p)}X }
    \\&\quad + 8\De_{\hat{g}(\p)} u   + 2\hat{\pi}(\p)^{ij} \LL_X \hat{g}(\p)_{ij} - \tr_{\hat{g}(\p)}\hat{\pi}(\p) \div_{\hat{g}(\p)}X 
    \\&\quad -8\De_{\bar{g}} u   - 2\bar{\pi}^{ij} \LL_X \bar{g}_{ij} + \tr_{\bar{g}}\bar{\pi} \div_{\bar{g}}X. 
  \end{align*}
  On the other hand, thanks to \eqref{expansion M} and \eqref{DM}, as well as \eqref{def g hat pi hat} and $\div_{\bar{g}}\bar{\pi}=0$, we rewrite the equation
  \begin{align*}
    \MM\pth{ u^4 \hat{g}(\p), u^2\pth{ \hat{\pi}(\p) + L_{\hat{g}(\p)}X}}_i = 0
  \end{align*}
  as
  \begin{align*}
    -2\div_{\bar{g}}L_{\bar{g}}X_i  - 8  \bar{g}^{j\ell} \bar{\pi}_{i\ell} \dr_ju   + 4   \tr_{\bar{g}}\bar{\pi}  \dr_i u     & =  2D\MM[\bar{g},\bar{\pi}](\wc{g} + \breve{g}, \wc{\pi} + \breve{\pi})_i +2 Q_\MM[\bar{g},\bar{\pi}](\wc{g} + \breve{g}, \wc{\pi} + \breve{\pi})_i
    \\&\quad + 8 u^{-1} \hat{g}(\p)^{j\ell}L_{\hat{g}(\p)}X_{i\ell} \dr_ju - 4 u^{-1}  \tr_{\hat{g}(\p)} L_{\hat{g}(\p)}X \dr_i u
    \\&\quad + 8 (u^{-1}-1) \hat{g}(\p)^{j\ell} \hat{\pi}(\p)_{i\ell} \dr_ju  - 4 (u^{-1}-1)  \tr_{\hat{g}(\p)}\hat{\pi}(\p)  \dr_i u 
    \\&\quad + 2\div_{\hat{g}(\p)}L_{\hat{g}(\p)}X_i  + 8  \hat{g}(\p)^{j\ell} \hat{\pi}(\p)_{i\ell} \dr_ju   - 4   \tr_{\hat{g}(\p)}\hat{\pi}(\p)  \dr_i u 
    \\&\quad -2\div_{\bar{g}}L_{\bar{g}}X_i  - 8  \bar{g}^{j\ell} \bar{\pi}_{i\ell} \dr_ju   + 4   \tr_{\bar{g}}\bar{\pi}  \dr_i u . 
  \end{align*}
  Applying the above formula to $u=1+\wc{u}$ and $X=\wc{X}$ concludes
  the proof of the lemma.
\end{proof}

The proof of Theorem \ref{theo general} reduces to whether we can
construct $\p$ such that \eqref{eq u X} admits a solution
$\wc{\mathbb{U}}\in H^2_{-q-\de}(\Si)$. Thanks to Proposition
\ref{prop P final}, we need $\p$ to ensure orthogonality conditions
with the $\mathbb{W}_{j,\ell,\a}$ for $1\leq j \leq q$. Because of the
remainder term in \eqref{eq u X}, the definition of $\p$ is non-linear
and will follow from a fixed point argument performed in Section
\ref{section corrector}. More precisely for each
$\wc{\mathbb{U}}\in H^2_{-q-\de}(\Si)$ small enough we will construct
and control $\p\pth{\wc{\mathbb{U}}}$ so that the right-hand side of
\eqref{eq u X} is orthogonal to $\mathbb{W}_{j,\ell,\a}$ for
$1\leq j \leq q$. We then solve the elliptic system in Section
\ref{section solving system} and thus conclude the proof of Theorem \ref{theo general}.

\subsection{Estimating the remainder}
\label{section remainder}
We start by estimating the remainder term in \eqref{eq u X}, i.e.
$\RR\pth{\p,\wc{\mathbb{U}}}$. For the sake of clarity, we introduce
the schematic notations
\begin{align*}
    \bar{\Ga} = \{ \nab\bar{g},\bar{\pi}\}, \quad \wc{\Ga} = \{ \nab\wc{g},\wc{\pi}\}, \quad \breve{\Ga} = \{ \nab\breve{g},\breve{\pi}\}, \quad \hat{\Ga}(\p) = \{ \nab\hat{g}(\p),\hat{\pi}(\p)\}.
\end{align*}
Since $(\wc{g},\wc{\pi})$, $\p$ and $\wc{\mathbb{U}}$ will be small in appropriate spaces, we only write down quadratic terms
when manipulating $\RR\pth{\p,\wc{\mathbb{U}}}$ since cubic and higher order
terms are smaller and enjoy more decay and thus are easier
to treat. Note also that, from now on, $\wc{g}$ and $\wc{\pi}$ are fixed and are assumed to satisfy \eqref{assumption check} for some $\e>0$ that will be fixed in the proofs of Lemmas \ref{lem corrector map} and \ref{dernier lemme}.

Thanks to its expression given in Lemma \ref{lem conformal
  formulation} and the various schematic expressions of Lemma \ref{lem
  linearized constraint} we find
\begin{align*}
    \RR\pth{\p,\wc{\mathbb{U}}} & = \RR_1\pth{\p} + \RR_2\pth{\p,\wc{\mathbb{U}}} + \RR_3\pth{\p,\wc{\mathbb{U}}} + \text{higher-order terms}
\end{align*}
where we have schematically
\begin{align*}
    \RR_1\pth{\p} & = (\wc{g}+\breve{g})\nab\pth{\wc{\Ga}+\breve{\Ga}} + \bar{\Ga}(\wc{g}+\breve{g})\pth{\wc{\Ga}+\breve{\Ga}} +  \pth{\wc{\Ga}+\breve{\Ga}}^2 + \bar{\Ga}^2(\wc{g}+\breve{g})^2,
    \\ \RR_2\pth{\p,\wc{\mathbb{U}}} & = \hat{g}(\p) \pth{\nab \wc{\mathbb{U}}}^2  + \hat{\Ga}(\p)  \wc{\mathbb{U}} \pth{ \nab \wc{\mathbb{U}} + \hat{\Ga}(\p) \wc{\mathbb{U}}},
    \\ \RR_3\pth{\p,\wc{\mathbb{U}}} & = (\wc{g}+\breve{g})\nab^2\wc{\mathbb{U}} + \pth{ (\wc{g}+\breve{g})\bar{\Ga} + \wc{\Ga}+\breve{\Ga}}\nab \wc{\mathbb{U}} 
    \\&\quad + \pth{ \nab\pth{\wc{\Ga}+\breve{\Ga}} + (\wc{g}+\breve{g})\nab\bar{\Ga} + \bar{\Ga}\pth{\wc{\Ga}+\breve{\Ga}} + \bar{\Ga}^2(\wc{g}+\breve{g}) }\wc{\mathbb{U}}. 
\end{align*}
Note that we have omitted undifferentiated components of $\bar{g}$ since they are $O(1)$. We now explain the origin of the three remainders above: 
\begin{itemize}
\item $\RR_1\pth{\p}$ collects quadratic terms in $(\wc{g},\wc{\pi})+\p$ coming from the linearization of $\Phi\pth{\hat{g}(\p),\hat{\pi}(\p)}$ around $(\bar{g},\bar{\pi})$, i.e. $Q_\HH[\bar{g},\bar{\pi}](\wc{g} + \breve{g}, \wc{\pi} + \breve{\pi})$ and $Q_\MM[\bar{g},\bar{\pi}](\wc{g} + \breve{g}, \wc{\pi} + \breve{\pi})$ for which we use \eqref{QH}-\eqref{QM}.
\item $\RR_2\pth{\p,\wc{\mathbb{U}}}$ collects quadratic terms in $\wc{\mathbb{U}}$ produced after having extracted from $\Phi(g,\pi)=0$ the linear system for $\mathbb{U}$ (recall the definition \eqref{ansatz}), for instance $4(1+\wc{u})^{-1} \hat{g}(\p)^{j\ell} \pth{2 L_{\hat{g}(\p)}\wc{X}_{i\ell} \dr_j\wc{u} - L_{\hat{g}(\p)}\wc{X}_{j\ell} \dr_i \wc{u}}$ which is of the form $\hat{g}(\p) \pth{\nab \wc{\mathbb{U}}}^2$.
\item $\RR_3\pth{\p,\wc{\mathbb{U}}}$ collects quadratic cross terms in $\wc{\mathbb{U}}$ and $(\wc{g},\wc{\pi})+\p$ obtained when relating the linear operators defined with $(\hat{g}(\p),\hat{\pi}(\p))$ applied to $\wc{\mathbb{U}}$ with the ones defined with $(\bar{g},\bar{\pi})$, for instance $8\pth{\De_{\hat{g}(\p)}-\De_{\bar{g}}}\wc{u} $ which is of the form $(\wc{g}+\breve{g})\nab^2\wc{\mathbb{U}}+(\wc{\Ga}+\breve{\Ga})\nab \wc{\mathbb{U}}$.
\end{itemize}

In the lemmas below and in the rest of the article, all the implicit constants hidden with the symbol $\lesssim$ depend on the size of the support of $\breve{g}$ and $\breve{\pi}$, which is not an issue since this support will be fixed in Definition \ref{def space of correctors} below.

\begin{lemma}
  \label{lem reste 1}
Let $\de\in(0,1)$. Assume
\begin{align}
    \l \wc{\mathbb{U}}\r_{H^2_{-q-\de}(\Si)} & \leq C_0 \e, \label{assumption lem reste 1}
    \\ \l \breve{g}\r_{W^{2,\infty}(\Si)} + \l \breve{\pi}\r_{W^{1,\infty}(\Si)} & \leq D_0 \e,\label{assumption lem reste 2}
\end{align}
for some $C_0,D_0>0$. We have
\begin{align}
    \l \RR\pth{\p,\wc{\mathbb{U}}} \r_{L^2_{-q-\de-2}(\Si)} & \lesssim C(C_0,D_0)\e^2  \label{estim reste 1},
    \\ \left| \ps{\RR\pth{\p,\wc{\mathbb{U}}}, \mathbb{W}_{j,\ell,\a}}_{L^2(\Sigma)} \right| & \lesssim C(C_0,D_0)\e^2 \label{estim reste 2}.
\end{align}
\end{lemma}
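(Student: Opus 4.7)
The plan is to estimate the $L^2_{-q-\de-2}(\Si)$ norm of each of the three schematic pieces $\RR_1$, $\RR_2$, $\RR_3$ of the remainder separately via the weighted multiplication property of Lemma \ref{lem embedding}(iii), and then to deduce the pairing estimate \eqref{estim reste 2} by a Cauchy--Schwarz inequality in the duality between the weighted $L^2$ spaces $L^2_{-q-\de-2}(\Si)$ and $L^2_{q+\de-1}(\Si)$.

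First, I would translate the hypotheses into uniform weighted bounds on every factor appearing in the schematic notation. By \eqref{assumption check}, $\l\wc{g}\r_{H^2_{-q-\de}}+\l\wc{\pi}\r_{H^1_{-q-\de-1}}\le\e$. The compact support of $(\breve{g},\breve{\pi})$ together with \eqref{assumption lem reste 2} gives $\l\breve{g}\r_{H^2_{-q-\de}}+\l\breve{\pi}\r_{H^1_{-q-\de-1}}\lesssim D_0\e$, since the support of $\p$ is a fixed compact subset of $\Si$. The asymptotic flatness of $(\bar{g},\bar{\pi})$ from Definition \ref{def AF} yields $|\bar{\Ga}|\lesssim (1+r)^{-2}$. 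Finally Lemma \ref{lem embedding}(ii) gives $\l\wc{\mathbb{U}}\r_{C^0_{-q-\de}}\lesssim C_0\e$, so $\hat{g}(\p)=\bar{g}+\wc{g}+\breve{g}$ is uniformly close to $\bar{g}$ in $L^\infty$ once $\e$ is small. Each product in $\RR_1$, $\RR_2$, $\RR_3$ is then bounded in $L^2_{-q-\de-2}$ via Lemma \ref{lem embedding}(iii) with an appropriate choice of indices and weights. The prototypical term $\hat{g}(\p)(\nabla\wc{\mathbb{U}})^2$ from $\RR_2$ is controlled by extracting the $L^\infty$ bound on $\hat{g}(\p)$ and applying $H^1_{-q-\de-1}\times H^1_{-q-\de-1}\hookrightarrow L^2_{-q-\de-2}$, valid because $-q-\de-2 > -2q-2\de-2$. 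The term $(\wc{g}+\breve{g})\nabla^2\wc{\mathbb{U}}$ from $\RR_3$ is handled by $H^2_{-q-\de}\times L^2_{-q-\de-2}\hookrightarrow L^2_{-q-\de-2}$, while the seed-only terms in $\RR_1$ such as $\wc{g}\nabla\wc{\Ga}$ or $\bar{\Ga}(\wc{g}+\breve{g})(\wc{\Ga}+\breve{\Ga})$ are analogous, the decay of $\bar{\Ga}$ providing additional room. The omitted cubic and higher-order terms carry an extra factor of $\e$ and are a fortiori smaller. Summing contributions produces \eqref{estim reste 1} with constant $C(C_0,D_0)\e^2$.

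For \eqref{estim reste 2}, I use Proposition \ref{prop P final}(ii) to bound $|\mathbb{W}_{j,\ell,\a}|\lesssim r^{j-1}\le r^{q-1}$. A direct computation shows $\mathbb{W}_{j,\ell,\a}\in L^2_{q+\de-1}(\Si)$: the integrand $(1+r)^{-2q-2\de-1}r^{2q}$ behaves like $r^{-2\de-1}$ at infinity and is integrable since $\de>0$. Cauchy--Schwarz in the duality between $L^2_{-q-\de-2}$ and $L^2_{q+\de-1}$ then yields
\begin{align*}
\left|\ps{\RR,\mathbb{W}_{j,\ell,\a}}_{L^2(\Si)}\right| \lesssim \l\RR\r_{L^2_{-q-\de-2}(\Si)}\l\mathbb{W}_{j,\ell,\a}\r_{L^2_{q+\de-1}(\Si)}\lesssim C(C_0,D_0)\e^2,
\end{align*}
by \eqref{estim reste 1}. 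The main obstacle is pure bookkeeping: the remainder is a large sum of multilinear terms and for each one the hypotheses of Lemma \ref{lem embedding}(iii) must be verified with target weight at most $-q-\de-2$. The scheme works uniformly because every term in $\RR$ is at least quadratic in the small unknowns $\wc{g}+\breve{g}$, $\wc{\pi}+\breve{\pi}$, $\wc{\mathbb{U}}$, so the natural weight of each output is strictly better than $-q-\de-2$, providing the margin needed for the multiplication condition $\de>\de_1+\de_2$; once verified on the representative examples above, the remaining terms are strictly analogous.
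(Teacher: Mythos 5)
Your proposal is correct and follows essentially the same route as the paper's proof: estimate $\RR_1$, $\RR_2$, $\RR_3$ separately in $L^2_{-q-\de-2}(\Si)$ using the multiplication property of Lemma \ref{lem embedding}(iii), the pointwise decay of $\bar{\Ga}$ from asymptotic flatness, and the fact that the compactly supported $(\breve{g},\breve{\pi})$ may be absorbed into the weighted Sobolev bounds; then deduce \eqref{estim reste 2} from \eqref{estim reste 1} and the pointwise bound $|\mathbb{W}_{j,\ell,\a}|\lesssim r^{j-1}$ by the same Cauchy--Schwarz duality $L^2_{-q-\de-2}\times L^2_{q+\de-1}\to\RRR$ that the paper invokes (tersely) via ``$j\le q$ and $\de>0$''. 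One small slip in the write-up: you invoke the $C^0_{-q-\de}$ bound on $\wc{\mathbb{U}}$ to control $\hat{g}(\p)$, but $\hat{g}(\p)$ depends on $\wc{g}$ and $\breve{g}$ rather than on $\wc{\mathbb{U}}$; the intended conclusion still holds via the same embedding applied to $\wc{g}\in H^2_{-q-\de}(\Si)$.
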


\begin{proof}
  Recall that $\p=(\breve{g},\breve{\pi})$ is compactly supported and
  that its support will be fixed in the whole proof so that when
  estimating terms containing either $\breve{g}$ or $\breve{\pi}$ we can
  drop the weights at infinity. Moreover, it suffices to estimate
  $\RR_1\pth{\p}$, $\RR_2\pth{\p,\wc{\mathbb{U}}}$ and
  $\RR_3\pth{\p,\wc{\mathbb{U}}}$ since the higher order terms in
  $\RR\pth{\p,\wc{\mathbb{U}}}$ enjoy better decay and are smaller. We
  start with $\RR_1\pth{\p}$:
\begin{align*}
    \l \RR_1\pth{\p} \r_{L^2_{-q-\de-2}(\Si)} & \lesssim \l (\wc{g}+\breve{g})\nab\pth{\wc{\Ga}+\breve{\Ga}} \r_{L^2_{-q-\de-2}(\Si)} + \l (\wc{g}+\breve{g})\pth{\wc{\Ga}+\breve{\Ga}} \r_{L^2_{-q-\de}(\Si)} 
    \\&\quad + \l \pth{\wc{\Ga}+\breve{\Ga}}^2 \r_{L^2_{-q-\de-2}(\Si)} + \l(\wc{g}+\breve{g})^2 \r_{L^2_{-q-\de+2}(\Si)},
\end{align*}
where we used asymptotic flatness of $(\bar{g},\bar{\pi})$ from
Definition \ref{def AF}, which implies
$|\bar{\Ga}|\lesssim (1+r)^{-2}$. Using now the continuous bilinear
embeddings from Lemma \ref{lem embedding} we obtain
\begin{align*}
    \l \RR_1\pth{\p} \r_{L^2_{-q-\de-2}(\Si)} & \lesssim \l \wc{g}+\breve{g}\r_{H^2_{-q-\de}(\Si)}\pth{ \l\nab\pth{\wc{\Ga}+\breve{\Ga}} \r_{L^2_{-q-\de-2}(\Si)} + \l \wc{\Ga}+\breve{\Ga} \r_{H^1_{-q-\de-1}(\Si)} }
    \\&\quad + \l \wc{\Ga}+\breve{\Ga}\r_{H^1_{-q-\de-1}(\Si)}^2 + \l\wc{g}+\breve{g} \r_{H^2_{-q-\de}(\Si)}^2.
\end{align*}
Using now \eqref{assumption check} and \eqref{assumption lem reste 2}
this gives
\begin{align}
    \l \RR_1\pth{\p} \r_{L^2_{-q-\de-2}(\Si)} & \lesssim C(D_0)\e^2 . \label{estim R1}
\end{align}
We now estimate $\RR_2\pth{\p,\wc{\mathbb{U}}}$:
\begin{align*}
    \l \RR_2\pth{\p,\wc{\mathbb{U}}} \r_{L^2_{-q-\de-2}(\Si)} & \lesssim (1+C(D_0)\e)\pth{ \l  \pth{\nab \wc{\mathbb{U}}}^2 \r_{L^2_{-q-\de-2}(\Si)} + \l  \wc{\mathbb{U}}  \nab \wc{\mathbb{U}}  \r_{L^2_{-q-\de}(\Si)}  + \l  \wc{\mathbb{U}}^2 \r_{L^2_{-q-\de+2}(\Si)} },
\end{align*}
where we used \eqref{def g hat pi hat}, asymptotic flatness,
\eqref{assumption check} and \eqref{assumption lem reste 2} which
imply $\hat{g}(\p)=1+\GO{r^{-1}}+\GO{\e C(D_0) r^{-q-\de}}$ and
$\hat{\Ga}(\p)=\GO{r^{-2}} + \e C(D_0) \tilde{\Ga}(\p)$ where $\tilde{\Ga}(\p)\in H^1_{-q-\de-1}(\Si)$ (for this last term we thus need to use a multiplication property from Lemma \ref{lem embedding}). The
assumption \eqref{assumption lem reste 1} and the continuous bilinear
embeddings from Lemma \ref{lem embedding} then imply
\begin{align}
    \l \RR_2\pth{\p,\wc{\mathbb{U}}} \r_{L^2_{-q-\de-2}(\Si)} & \lesssim C(C_0,D_0)\e^2 . \label{estim R2}
\end{align}
We finally estimate $\RR_3\pth{\p,\wc{\mathbb{U}}}$:
\begin{align*}
  \l \RR_3\pth{\p,\wc{\mathbb{U}}} \r_{L^2_{-q-\de-2}(\Si)}
  & \lesssim \l (\wc{g}+\breve{g})\nab^2\wc{\mathbb{U}} \r_{L^2_{-q-\de-2}(\Si)} + \l \pth{\wc{\Ga}+\breve{\Ga}}\nab \wc{\mathbb{U}}\r_{L^2_{-q-\de-2}(\Si)} +  \l \pth{\wc{g}+\breve{g}}\nab \wc{\mathbb{U}}\r_{L^2_{-q-\de}(\Si)}
  \\
  &\quad + \l \nab\pth{\wc{\Ga}+\breve{\Ga}} \wc{\mathbb{U}}  \r_{L^2_{-q-\de-2}(\Si)} + \l (\wc{g}+\breve{g}) \wc{\mathbb{U}} \r_{L^2_{-q-\de+1}(\Si)}
  \\
  &\quad + \l \pth{\wc{\Ga}+\breve{\Ga}} \wc{\mathbb{U}} \r_{L^2_{-q-\de}(\Si)} + \l (\wc{g}+\breve{g}) \wc{\mathbb{U}} \r_{L^2_{-q-\de+2}(\Si)},
\end{align*}
where we used asymptotic flatness to write $\bar{\Ga}=\GO{r^{-2}}$ and
$\nab\bar{\Ga}=\GO{r^{-3}}$. As above the continuous bilinear
embeddings from Lemma \ref{lem embedding} together with \eqref{assumption check}, \eqref{assumption lem reste 1} and \eqref{assumption lem reste 2} imply
\begin{align}
    \l \RR_3\pth{\p,\wc{\mathbb{U}}} \r_{L^2_{-q-\de-2}(\Si)} & \lesssim C(C_0,D_0)\e^2 . \label{estim R3}
\end{align}
Putting \eqref{estim R1}, \eqref{estim R2} and \eqref{estim R3}
together concludes the proof of \eqref{estim reste 1}. Finally, since
$j\leq q$ and $\de>0$ imply
$\left|\ps{\mathbb{V},\mathbb{W}_{j,\ell,\a}}_{L^2(\Sigma)}\right|\lesssim \l
\mathbb{V}\r_{L^2_{-q-\de-2}}$ (where we also used
$|\mathbb{W}_{j,\ell,\a}|\lesssim r^{j-1}$), \eqref{estim reste 1}
implies \eqref{estim reste 2}.
\end{proof}

\begin{lemma}\label{lem reste 2}
  Let $\wc{\mathbb{U}},\wc{\mathbb{U}}'$ satisfying \eqref{assumption
    lem reste 1} and $\p,\p'$ satisfying \eqref{assumption lem reste
    2}. We have
  \begin{align}
    &\l \RR\pth{\p,\wc{\mathbb{U}}} - \RR\pth{\p',\wc{\mathbb{U}}'} \r_{L^2_{-q-\de-2}(\Si)} \label{estim reste 3}
    \\&\hspace{2cm} \lesssim  C(C_0,D_0)\e \bigg( \l \breve{g}-\breve{g}' \r_{W^{2,\infty}(\Si)} + \l \breve{\pi}-\breve{\pi}' \r_{W^{1,\infty}(\Si)}+\l \wc{\mathbb{U}} - \wc{\mathbb{U}}' \r_{H^2_{-q-\de}(\Si)} \bigg),\nonumber
    \\ &\left| \ps{\RR\pth{\p,\wc{\mathbb{U}}} - \RR\pth{\p',\wc{\mathbb{U}}'}, \mathbb{W}_{j,\ell,\a}}_{L^2(\Sigma)} \right| \label{estim reste 4}
    \\&\hspace{2cm} \lesssim C(C_0,D_0)\e \bigg( \l \breve{g}-\breve{g}' \r_{W^{2,\infty}(\Si)} + \l \breve{\pi}-\breve{\pi}' \r_{W^{1,\infty}(\Si)}+\l \wc{\mathbb{U}} - \wc{\mathbb{U}}' \r_{H^2_{-q-\de}(\Si)} \bigg).\nonumber
  \end{align}
\end{lemma}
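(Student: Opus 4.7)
The plan is to reduce Lemma~\ref{lem reste 2} to the bilinear estimates already carried out in the proof of Lemma~\ref{lem reste 1}. The key device is the elementary telescoping identity
\begin{equation*}
B(a,b)-B(a',b')=B(a-a',b)+B(a',b-b'),
\end{equation*}
valid for any bilinear form $B$, together with its obvious trilinear analogue. Applied to the schematic decomposition $\RR=\RR_1(\p)+\RR_2(\p,\wc{\mathbb{U}})+\RR_3(\p,\wc{\mathbb{U}})+\text{higher order terms}$ recalled before Lemma~\ref{lem reste 1}, this will reduce each difference $\RR_i(\p,\wc{\mathbb{U}})-\RR_i(\p',\wc{\mathbb{U}}')$ to a sum of products in which exactly one factor carries a ``difference'' norm (one of $\breve g-\breve g'$, $\breve\pi-\breve\pi'$ or $\wc{\mathbb{U}}-\wc{\mathbb{U}}'$, possibly with derivatives) while the remaining factors retain the $\varepsilon$-smallness already exploited in Lemma~\ref{lem reste 1}.

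I would then estimate each such product in $L^2_{-q-\de-2}(\Si)$ using the continuous bilinear embeddings of Lemma~\ref{lem embedding}, the asymptotic flatness of $(\bar g,\bar\pi)$ (so that $\bar\Ga=O(r^{-2})$, $\nab\bar\Ga=O(r^{-3})$), the seed bound \eqref{assumption check}, and the assumptions \eqref{assumption lem reste 1}–\eqref{assumption lem reste 2} applied to both $(\p,\wc{\mathbb{U}})$ and $(\p',\wc{\mathbb{U}}')$. Summing the contributions from $\RR_1$, $\RR_2$ and $\RR_3$ produces a bound of the form $C(C_0,D_0)\varepsilon$ times the sum of the three difference norms on the right-hand side of \eqref{estim reste 3}.

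A few of the terms in $\RR_2$ and $\RR_3$ involve $\hat g(\p)$, its inverse, $\hat\pi(\p)$ and $\hat\Ga(\p)$, which are not quite bilinear in $\p$; there I would use instead the identity $\hat g(\p)-\hat g(\p')=\breve g-\breve g'$ together with the Lipschitz formula $\hat g(\p)^{-1}-\hat g(\p')^{-1}=-\hat g(\p)^{-1}(\breve g-\breve g')\hat g(\p')^{-1}$, and similarly for $(1+\wc u)^{-1}-(1+\wc u')^{-1}$. Since both $\hat g(\p)$ and $\hat g(\p')$ are close to $\bar g$ by \eqref{assumption check} and \eqref{assumption lem reste 2}, these identities produce only a factor $1+O(\varepsilon)$ multiplying the relevant difference, which is harmless. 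I expect this bookkeeping of inverse factors to be the only mildly delicate step; it presents no genuine analytic obstacle beyond what has already been treated in Lemma~\ref{lem reste 1}.

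Finally, \eqref{estim reste 4} will follow from \eqref{estim reste 3} by the same mechanism that allowed \eqref{estim reste 2} to be deduced from \eqref{estim reste 1}: since $j\leq q$, $\de>0$ and $|\mathbb{W}_{j,\ell,\a}|\lesssim r^{j-1}$, Cauchy–Schwarz yields $|\ps{\mathbb V,\mathbb W_{j,\ell,\a}}_{L^2(\Si)}|\lesssim \l\mathbb V\r_{L^2_{-q-\de-2}(\Si)}$, so no further work beyond applying \eqref{estim reste 3} to $\mathbb V=\RR(\p,\wc{\mathbb{U}})-\RR(\p',\wc{\mathbb{U}}')$ is required.
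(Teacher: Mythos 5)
Your proposal is correct and is essentially the paper's own argument: the paper also telescopes each bilinear/trilinear product $B(a,b)-B(a',b')=B(a-a',b)+B(a',b-b')$ in the schematic decomposition $\RR=\RR_1+\RR_2+\RR_3+\text{h.o.t.}$, writes out the resulting differences explicitly for $\RR_1$, $\RR_2$, $\RR_3$, re-applies the bilinear embeddings of Lemma~\ref{lem embedding} exactly as in the proof of \eqref{estim reste 1}--\eqref{estim R3}, and then deduces \eqref{estim reste 4} from \eqref{estim reste 3} via $|\ps{\mathbb V,\mathbb W_{j,\ell,\a}}_{L^2(\Si)}|\lesssim\l\mathbb V\r_{L^2_{-q-\de-2}(\Si)}$. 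The only cosmetic difference is that the paper treats the factors $(1+\wc u)^{-1}$ and $\hat g(\p)^{-1}$ schematically inside the ``higher-order terms'' bucket rather than spelling out the Lipschitz identity you mention, but this does not change the argument.
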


\begin{proof}
  We have
  \begin{align*}
    \RR\pth{\p,\wc{\mathbb{U}}} - \RR\pth{\p',\wc{\mathbb{U}}'} & = \RR_1\pth{\p} - \RR_1\pth{\p'} + \RR_2\pth{\p,\wc{\mathbb{U}}} - \RR_2\pth{\p',\wc{\mathbb{U}}'} 
    \\&\quad + \RR_3\pth{\p,\wc{\mathbb{U}}} - \RR_3\pth{\p',\wc{\mathbb{U}}'} + \text{difference of higher order terms}.
  \end{align*}
  We have
  \begin{align*}
    \RR_1\pth{\p} - \RR_1\pth{\p'} & = (\breve{g}-\breve{g}')\nab\pth{\wc{\Ga}+\breve{\Ga}} + (\wc{g}+\breve{g}')\nab\pth{\breve{\Ga}-\breve{\Ga}'}
    \\&\quad + \bar{\Ga}\pth{(\breve{g}-\breve{g}')\pth{\wc{\Ga}+\breve{\Ga}} + (\wc{g}+\breve{g}')\pth{\breve{\Ga}-\breve{\Ga}'}}
    \\&\quad +  \pth{\breve{\Ga}-\breve{\Ga}'}\pth{\wc{\Ga}+\breve{\Ga}+\breve{\Ga}'} + \bar{\Ga}^2 \pth{\breve{g}-\breve{g}'}\pth{\wc{g}+\breve{g}+\breve{g}'},
  \end{align*}
  where this expression (as well as \eqref{diff R2 bis} and \eqref{diff R3 bis} below) is to be understood with constant coefficients in front of all the terms.
  By following the same lines as the proof of \eqref{estim R1} we obtain
  \begin{align}
    \l \RR_1\pth{\p} - \RR_1\pth{\p'} \r_{L^2_{-q-\de-2}(\Si)} & \lesssim C(C_0)\e \pth{ \l \breve{g}-\breve{g}' \r_{W^{2,\infty}(\Si)} + \l \breve{\pi}-\breve{\pi}' \r_{W^{1,\infty}(\Si)} }. \label{diff R1}
  \end{align}
  Next, we have
  \begin{equation}\label{diff R2 bis}
  \begin{aligned}
    \RR_2\pth{\p,\wc{\mathbb{U}}} - \RR_2\pth{\p',\wc{\mathbb{U}}'} & = (\breve{g}-\breve{g}')\pth{\nab \wc{\mathbb{U}}}^2 + \hat{g}(\p')\nab\pth{\wc{\mathbb{U}}-\wc{\mathbb{U}}'} \nab\pth{\wc{\mathbb{U}}+\wc{\mathbb{U}}'}
    \\&\quad + \pth{\breve{\Ga}-\breve{\Ga}'} \wc{\mathbb{U}}  \nab \wc{\mathbb{U}} + \hat{\Ga}(\p') \pth{ \pth{\wc{\mathbb{U}}-\wc{\mathbb{U}}'}  \nab \wc{\mathbb{U}} +\wc{\mathbb{U}}'  \nab \pth{\wc{\mathbb{U}}- \wc{\mathbb{U}}'}}
    \\&\quad + \pth{ \pth{\breve{\Ga}-\breve{\Ga}'}  \wc{\mathbb{U}} + \hat{\Ga}(\p') \pth{ \wc{\mathbb{U}} -\wc{\mathbb{U}}'} } \pth{\hat{\Ga}(\p)  \wc{\mathbb{U}} + \hat{\Ga}(\p')  \wc{\mathbb{U}}'}.
  \end{aligned}
  \end{equation}
  By following the same lines as the proof of \eqref{estim R2} we obtain
  \begin{align}
    \l \RR_2\pth{\p,\wc{\mathbb{U}}} - \RR_2\pth{\p',\wc{\mathbb{U}}'}\r_{L^2_{-q-\de-2}(\Si)}
    \lesssim{}&  C(C_0,D_0)\e \l \wc{\mathbb{U}} - \wc{\mathbb{U}}' \r_{H^2_{-q-\de}(\Si)}\label{diff R2}
    \\
              & + C(C_0,D_0)\e^2 \pth{ \l \breve{g}-\breve{g}' \r_{W^{2,\infty}(\Si)} + \l \breve{\pi}-\breve{\pi}' \r_{W^{1,\infty}(\Si)} }\nonumber.
  \end{align}
  Finally, we have
  \begin{equation}\label{diff R3 bis}
  \begin{aligned}
    \RR_3\pth{\p,\wc{\mathbb{U}}} - \RR_3\pth{\p',\wc{\mathbb{U}}'}
    ={}& (\breve{g}-\breve{g}')\nab^2\wc{\mathbb{U}} + (\wc{g}+\breve{g}')\nab^2\pth{\wc{\mathbb{U}}-\wc{\mathbb{U}}' }
    \\
    & + \pth{\breve{\Ga}-\breve{\Ga} + (\breve{g} - \breve{g}')\bar{\Ga}} \nab \wc{\mathbb{U}}  + \pth{\wc{\Ga}+\breve{\Ga}' + (\wc{g}+\breve{g}')\bar{\Ga}}\nab \pth{\wc{\mathbb{U}}-\wc{\mathbb{U}}'}  
    \\
    & + \pth{ \nab\pth{\breve{\Ga}-\breve{\Ga}'} + (\breve{g}-\breve{g}')\nab\bar{\Ga} + \bar{\Ga}\pth{\breve{\Ga}-\breve{\Ga}'} + \bar{\Ga}^2(\breve{g}-\breve{g}') }\wc{\mathbb{U}} 
    \\
    & + \pth{ \nab\pth{\wc{\Ga}+\breve{\Ga}'} + (\wc{g}+\breve{g}')\nab\bar{\Ga} + \bar{\Ga}\pth{\wc{\Ga}+\breve{\Ga}'} + \bar{\Ga}^2(\wc{g}+\breve{g}') }\pth{\wc{\mathbb{U}}-\wc{\mathbb{U}}'}.
  \end{aligned}
  \end{equation}
  By following the same lines as the proof of \eqref{estim R3} we obtain
  \begin{equation}
    \begin{split}
      &\l \RR_3\pth{\p,\wc{\mathbb{U}}} - \RR_3\pth{\p',\wc{\mathbb{U}}'}\r_{L^2_{-q-\de-2}(\Si)}  \label{diff R3}
    \\
    \lesssim{}& C(C_0,D_0)\e \bigg( \l \breve{g}-\breve{g}' \r_{W^{2,\infty}(\Si)} + \l \breve{\pi}-\breve{\pi}' \r_{W^{1,\infty}(\Si)}+\l \wc{\mathbb{U}} - \wc{\mathbb{U}}' \r_{H^2_{-q-\de}(\Si)} \bigg). 
    \end{split}    
  \end{equation}
  By putting \eqref{diff R1}, \eqref{diff R2} and \eqref{diff R3} together we obtain \eqref{estim reste 3}. We conclude the proof of the lemma by noting that \eqref{estim reste 3} implies \eqref{estim reste 4}, as \eqref{estim reste 1} implies \eqref{estim reste 2}.
\end{proof}

\subsection{Solving for the corrector}\label{section corrector} 

In this section, we construct the corrector $\p$, starting by deducing from the analyticity assumptions on $\pth{\Om,\bar{g}_{|_\Om},\bar{\pi}_{|_\Om}}$ (see Theorem \ref{theo general}) that KIDS, i.e. elements of $\ker(D\Phi[\bar{g},\bar{\pi}]^*)$, enjoy similar regularity.

\begin{lemma}\label{lem regularity kids}
If $D\Phi[\bar{g},\bar{\pi}]^*(f,X)=0$ on a neighborhood of $\dr\Si$, then $f$, $X$, $\dr_\nu f$ and $D_\nu X$ are analytic on $\dr\Si$.
\end{lemma}

\begin{proof}
If $D\Phi[\bar{g},\bar{\pi}]^*(f,X)=0$ on a neighborhood of $\dr\Si$, then (recall \eqref{KIDS 1}-\eqref{KIDS 2}) $(f,X)$ satisfies a system of the form
\begin{align*}
    \Hess_{\bar{g}}(f) & = B^{(1)}_{k\ell} D^k X^\ell + B^{(2)} f + B^{(3)}_k X^k,
    \\  \LL_X\bar{g} & = A^{(2)} f,
\end{align*}
where the coefficients $B^{(1)}$, $B^{(2)}$, $B^{(3)}$ and $A^{(2)}$ depend only on $\bar{g}$ and $\bar{\pi}$ (and their derivatives) and are analytic on $\{r_0\leq r < r_1\}$. To simplify the exposition, in what follows we do not write the analytic coefficients of the equations satisfied by $f$ and $X$. In particular the above equations rewrite schematically as
\begin{align}
    \Hess_{\bar{g}}(f)  & = D X +  f + X,\label{eq kids f}
    \\  \LL_X\bar{g} & = f.\label{eq kids X}
\end{align}
We now introduce the standard
notations from Chapter 3 of \cite{Christodoulou1993}, i.e.
$\nabb$, $\Hessb$ and $\lapp$ denote covariant derivative, Hessian and
Laplace-Beltrami operator associated to the induced metric
$\gb$ on $\dr\Si$ by $\bar{g}$ and for any tangential vector field $\Yb$ to $\dr\Si$ we define $\nabb_\nu\Yb$ to be the projection on $T\dr\Si$ of $D_\nu\Yb$, i.e.
\begin{align*}
\nabb_\nu\Yb \vcentcolon = D_\nu\Yb - \bar{g}\pth{ D_\nu\Yb,\nu}\nu.
\end{align*}
We also decompose the vector field $X=N\nu + \Xb$ where $\Xb\in T\dr\Si$ and $N$ is a scalar function. In order to project the equations \eqref{eq kids f}-\eqref{eq kids X} on $\dr\Si$ we will use the following exact formulas:
\begin{align}
\Hess_{\bar{g}}(f)(\nu,\nu) & = \dr_\nu^2 f - \gb^{ab}D_\nu\nu_a e_b(f),\label{AA}
\\ \Hess_{\bar{g}}(f)(e_a,e_b) & = \Hessb_{\gb}(f)(e_a,e_b) +  \bar{g}(D_{e_a}\nu,e_b)\dr_\nu f,
\\ \pth{ \LL_X\bar{g}}(\nu,\nu) & = 2 \pth{ \dr_\nu N - \gb(\Xb,D_\nu\nu) },
\\ \pth{ \LL_X\bar{g}}(\nu,e_a) & =  N \gb(D_\nu\nu,e_a) + \gb(\nabb_\nu \Xb,e_a)  + e_a(N) -\gb(\Xb,D_{e_a}\nu),
\\ \pth{ \LL_X\bar{g}}(e_a,e_b) & = \gb(\nabb_{e_a}\Xb,e_b) + \gb(\nabb_{e_b}\Xb,e_a) + N\pth{ \LL_\nu\bar{g}}(e_a,e_b), \label{ZZ}
\end{align}
where $(e_1,e_2)$ is basis of $T\dr\Si$ and the indexes $a,b$ run through $1,2$. As we did when we obtained \eqref{eq kids f}-\eqref{eq kids X}, we do not write the exact coefficients which are all analytic by assumption. With this in mind, we use \eqref{eq kids f}-\eqref{eq kids X} to schematically rewrite the relations \eqref{AA}-\eqref{ZZ} as
\begin{align}
\dr_\nu^2 f & = \nabb f + \dr_\nu N + \nabb N + \nabb_\nu \Xb  + \nabb \Xb+  f + N + \Xb,\label{dnu f carré}
\\  \Hessb_{\gb}(f)(e_a,e_b) & =  \dr_\nu f + \dr_\nu N + \nabb N + \nabb_\nu \Xb  + \nabb \Xb +  f + N + \Xb, \label{hess tangent}
\\ \dr_\nu N & =  f + \Xb, \label{d nu N}
\\  \nabb_a  N & = \nabb_\nu \Xb_a + f + \Xb + N, \label{d nu Xb}
\\ \nabb_a \Xb_b+\nabb_b \Xb_a  & = f + N, \label{Lie tangent}
\end{align}
We take the tangential divergence of \eqref{d nu Xb} to get
\begin{align}\label{lap N}
    \lapp N & = \nabb^{\leq 1}( \nabb_\nu \Xb + f +  N + \Xb).
\end{align}
We take the tangential trace of \eqref{Lie tangent} to obtain $\divb \Xb= f+N$ and then take the tangential divergence of \eqref{Lie tangent} to obtain the following elliptic equation on $\Xb$
\begin{align}\label{lap Xb}
    \lapp \Xb & = \nabb^{\leq 1} (f + N + \Xb).
\end{align}
Using Corollary 3.2.3.2 from \cite{Christodoulou1993}, $[\lapp,\dr_\nu]$ involves $\nabb \dr_\nu$, $\nabb^2$, $\dr_\nu$ and $\nabb$. Therefore we obtain from \eqref{lap Xb} the following
\begin{align}\label{lap d nu Xb}
    \lapp \nabb_\nu \Xb & = \nabb^{\leq 1}(f + N + \Xb + \nabb \Xb + \dr_\nu f +  \nabb_\nu \Xb),
\end{align}
where we have used \eqref{d nu N} to replace $\dr_\nu N$. We also commute \eqref{lap Xb} with $\nabb$ to obtain
\begin{align*}
    \lapp \nabb \Xb & = \nabb^{\leq 1} (f + N + \Xb) + \nabb^2(f + N + \Xb).
\end{align*}
Using \eqref{d nu Xb} we replace $\nabb^2N$ by $\nabb \nabb_\nu \Xb$ and $\nabb N$ to obtain
\begin{align}\label{lap nabb Xb inter}
    \lapp \nabb \Xb & = \nabb^{\leq 1} (f + N + \Xb + \nabb_\nu \Xb + \nabb \Xb) + \nabb^2 f.
\end{align}
We use \eqref{d nu N} and \eqref{d nu Xb} to replace $\dr_\nu N$ and $\nabb_\nu\Xb$ in \eqref{hess tangent} and obtain
\begin{align}\label{hessb f}
    \Hessb f & = \dr_\nu f + \nabb^{\leq 1}( f + N + \Xb).
\end{align}
Taking the tangential trace of \eqref{hessb f} we obtain the equation for $f$:
\begin{align}\label{lap f}
    \lapp f & = \dr_\nu f + \nabb^{\leq 1}( f + N + \Xb).
\end{align}
Moreover, by using \eqref{hessb f} we can also control $\nabb^2 f$ in \eqref{lap nabb Xb inter} which becomes
\begin{align}\label{lap nabb Xb}
    \lapp \nabb \Xb & = \dr_\nu f + \nabb^{\leq 1} (f + N + \Xb + \nabb_\nu \Xb + \nabb \Xb).
\end{align}
Also, we commute \eqref{lap f} with $\dr_\nu$ (using again Corollary 3.2.3.2 from \cite{Christodoulou1993}) to obtain an equation on $\dr_\nu f$. We get:
\begin{align}\label{lap d nu f inter}
    \lapp \dr_\nu f & =   \dr_\nu^2 f +  \nabb^{\leq 1}( f + N + \Xb + \dr_\nu f  + \nabb_\nu \Xb),
\end{align}
where we have used \eqref{d nu N} and \eqref{hessb f} to replace $\dr_\nu N$ and $\nabb^2 f$. Using \eqref{dnu f carré} we can replace $\dr_\nu^2$ in \eqref{lap d nu f inter} to finally obtain
\begin{align}\label{lap d nu f}
    \lapp \dr_\nu f & =   \nabb^{\leq 1}( f + N + \Xb + \dr_\nu f  + \nabb_\nu \Xb).
\end{align}
By using the schematic notation
\begin{align*}
    V =\{ f , N , \Xb, \dr_\nu f , \nabb_\nu \Xb, \nabb\Xb \},
\end{align*}
we remark that combining equations \eqref{lap N}, \eqref{lap Xb}, \eqref{lap d nu Xb}, \eqref{lap f}, \eqref{lap nabb Xb} and \eqref{lap d nu f} gives an elliptic system on $\dr\Si$ of the form
\begin{align*}
    \lapp V = \nabb^{\leq 1} V.
\end{align*}
In particular, the coefficients of this system are analytic by assumption and thus standard elliptic regularity (see Theorem 7.5.1 in \cite{Hormanderzero}, and note that $\dr\Si$ is a manifold without boundary) implies that $V$ is analytic on $\dr\Si$. This concludes the proof of Lemma \ref{lem regularity kids}.
\end{proof}

We now define the space $\mathcal{Z}_q$ of correctors.

\begin{definition}\label{def space of correctors}
We fix $I\subset (r_0,r_1)$ an open interval in $\RRR$.
\begin{enumerate}[label=(\roman*)]
    \item We define $\chi_{corr}:\RRR\longrightarrow[0,1]$ to be a smooth function such that $\chi_{corr|_{\RRR\setminus I}}=0$ and $\chi_{corr|_{I}}>0$.
    \item For $j\geq 1$, $-(j-1)\leq \ell \leq j-1$ and $\a=0,1,2,3$ we define 
    \begin{align*}
        Z_{j,\ell,\a} & \vcentcolon =\chi_{corr}(r) D\Phi[\bar{g},\bar{\pi}]^*\pth{\mathbb{W}_{j,\ell,\a}}.
    \end{align*}
    \item For $q\in \mathbb{N}^*$ we define the $4q^2$ dimensional space of correctors
    \begin{align*}
        \mathcal{Z}_q & \vcentcolon = \mathrm{span}\big\{ Z_{j,\ell,\a}, \text{$1\leq j\leq q$, $-(j-1)\leq \ell \leq j-1$ and $\a=0,1,2,3$}\big\},
    \end{align*}
    endowed with the norm $\l\cdot\r_{W^{2,\infty}(\Si)\times W^{1,\infty}(\Si)}$.
\end{enumerate}
\end{definition}

The following lemma will allow us to solve the equation for the corrector.

\begin{lemma}\label{lem KIDS}
  Let $q\in\mathbb{N}^*$. 
  \begin{enumerate}[label=(\roman*)]
  \item The matrix
    \begin{align*}
      \pth{\ps{Z_{j,\ell,\a},D\Phi[\bar{g},\bar{\pi}]^*\pth{\mathbb{W}_{j',\ell',\a'}}}_{L^2(\Sigma)}}_{1\leq j,j'\leq q,-(j-1)\leq \ell\leq j-1,-(j'-1)\leq\ell'\leq j'-1, 0\leq \a,\a'\leq 3}
    \end{align*}
    is invertible. 
  \item Let $\mu_{j,\ell,\a}$ be real numbers for $1\leq j \leq q$,
    $-(j-1)\leq \ell \leq j-1$ and $\a=0,1,2,3$. There exists a unique
    $\p \in \mathcal{Z}_q$ such that
    \begin{align*}
      \ps{\p,D\Phi[\bar{g},\bar{\pi}]^*\pth{\mathbb{W}_{j,\ell,\a}}}_{L^2(\Sigma)} = \mu_{j,\ell,\a},
    \end{align*}
    and moreover $\p$ satisfies
    \begin{align*}
      \l \p \r_{W^{2,\infty}(\Si)\times W^{1,\infty}(\Si) } \lesssim \sup_{j,\ell,\a}|\mu_{j,\ell,\a}|.
    \end{align*}
  \end{enumerate}
\end{lemma}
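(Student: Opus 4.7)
My plan is to observe that the matrix in (i) is the Gram matrix of the family $\pth{D\Phi[\bar{g},\bar{\pi}]^*(\mathbb{W}_{j,\ell,\a})}$ with respect to the weighted inner product $\ps{\cdot,\cdot}_{\chi_{corr}} := \int_\Sigma \chi_{corr}(r)\,\cdot\,$, so it is symmetric and non-negative definite. Thus invertibility reduces to showing that if $V := \sum_{j,\ell,\a} c_{j,\ell,\a}\mathbb{W}_{j,\ell,\a}$ satisfies $D\Phi[\bar{g},\bar{\pi}]^*(V)=0$ on $\mathrm{supp}(\chi_{corr})\supset I$, then all $c_{j,\ell,\a}=0$. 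By Proposition \ref{prop P final}(ii), it suffices to show $V\equiv 0$ on $\Si$, and the strategy combines interior analyticity, boundary analytic continuation using the special structure of $F_\de$, and the unique continuation statement of Lemma \ref{lem unique continuation}.

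First, since $P^*(V)=0$ and $P^*$ has analytic coefficients on $\Om=\Si\cap\{r_0\leq r<r_1\}$, interior elliptic regularity yields $V$ analytic on $\Si\cap\{r_0<r<r_1\}$. Since $D\Phi[\bar{g},\bar{\pi}]^*$ also has analytic coefficients on $\Om$, $D\Phi[\bar{g},\bar{\pi}]^*(V)$ is analytic on this connected open annulus and vanishes on the nonempty open set $I$, so by analytic continuation $D\Phi[\bar{g},\bar{\pi}]^*(V)=0$ throughout $\Si\cap\{r_0<r<r_1\}$, i.e. $V$ is a KIDS there. Lemma \ref{lem regularity kids} then upgrades the regularity of $V$ to analyticity on all of $\Om$, including the boundary sphere $\dr\Si=\{r=r_0\}$.

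Next, I would exploit the boundary condition $(B_\nu^*+F_\de^*)(V)=0$ together with the key property of $F_\de$ from Corollary \ref{coro BC}: on the open set $\mathcal{U}\subset\dr\Si$ where $F_\de^*=0$, we have $B_\nu^*(V)=0$; since $V$, and hence $B_\nu^*(V)$, is analytic on the connected surface $\dr\Si$, analytic continuation gives $B_\nu^*(V)=0$ on all of $\dr\Si$. Plugging this into the boundary condition on the open set $\mathcal{V}\subset\dr\Si$ where $F_\de^*=\mathrm{Id}$ yields $V=0$ on $\mathcal{V}$, and, once more by analyticity on $\dr\Si$, $V=0$ on the whole boundary. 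Consequently on $\mathcal{V}$ we have simultaneously $V=0$ and $B_\nu^*(V)=0$, so Lemma \ref{lem unique continuation} applies and forces $V\equiv 0$ on $\Si$. Linear independence of $(\mathbb{W}_{j,\ell,\a})$ then gives $c_{j,\ell,\a}=0$, proving (i).

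Part (ii) is a routine linear-algebra consequence: writing $\p=\sum d_{j,\ell,\a} Z_{j,\ell,\a}$, the prescribed conditions are equivalent to the system $M^T d=\mu$; since $M$ is symmetric and invertible by (i), there is a unique solution $d$ with $|d|\lesssim \sup|\mu_{j,\ell,\a}|$, and the bound on $\l\p\r_{W^{2,\infty}(\Si)\times W^{1,\infty}(\Si)}$ follows because each $Z_{j,\ell,\a}$ is a fixed smooth compactly supported tensor with norm depending only on $(\Si,\bar{g},\bar{\pi})$. The main obstacle in the whole argument is the second step: chaining the interior analyticity of solutions to $P^*(V)=0$, Lemma \ref{lem regularity kids} for KIDS, and the boundary analytic continuation made possible by the vanishing/identity structure of $F_\de$, to eventually trigger the unique continuation lemma; this is exactly where the construction of $F_\de$ in Corollary \ref{coro BC} pays off.
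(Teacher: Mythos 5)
Your proof is correct and follows essentially the same route as the paper: both reduce invertibility to a vanishing argument for a kernel vector of what you rightly identify as a $\chi_{corr}$-weighted Gram matrix (the paper performs the same reduction more explicitly, multiplying each defining equality by $\lambda^{j',\ell',\alpha'}$ and summing to get the quadratic form $\ps{\chi_{corr}D\Phi^*(\lambda\mathbb{W}),D\Phi^*(\lambda\mathbb{W})}=0$), and both then chain interior analyticity of solutions to $P^*=0$, analytic propagation of $D\Phi[\bar g,\bar\pi]^*(V)=0$ from $I$ to the whole annulus, Lemma~\ref{lem regularity kids} to reach the boundary sphere, the $F_\delta=0$-on-$\mathcal U$ and $F_\delta=\mathrm{Id}$-on-$\mathcal V$ structure to get $B_\nu^*(V)=V=0$ on $\partial\Sigma$ by two successive analytic continuations along the boundary, and finally Lemma~\ref{lem unique continuation} and linear independence. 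Part (ii) is handled the same way in both.
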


\begin{proof}
  We start with the first point of the lemma. Let
  $\la^{j,\ell,\a}$ be real numbers defined such that
  \begin{align*}
    \ps{\la^{j,\ell,\a}\chi_{corr}(r) D\Phi[\bar{g},\bar{\pi}]^*\pth{\mathbb{W}_{j,\ell,\a}},D\Phi[\bar{g},\bar{\pi}]^*\pth{\mathbb{W}_{j',\ell',\a'}}}_{L^2(\Sigma)} = 0, 
  \end{align*}
  for all $1\leq j'\leq q$, $-(j'-1)\leq \ell'\leq j'-1$ and
  $\a'=0,1,2,3$. We multiply each equality by $\la^{j',\ell',\a'}$ and
  sum over the indices $j'$, $\ell'$ and $\a'$ to get
  \begin{align*}
    \ps{\chi_{corr}  D\Phi[\bar{g},\bar{\pi}]^*\pth{\la^{j,\ell,\a}\mathbb{W}_{j,\ell,\a}}, D\Phi[\bar{g},\bar{\pi}]^*\pth{\la^{j,\ell,\a}\mathbb{W}_{j,\ell,\a}}}_{L^2(\Sigma)} = 0, 
  \end{align*}
  Since $\chi_{corr|_{\RRR\setminus I}}= 0$ and $\chi_{corr|_{ I}}>0$ this implies
  \begin{align}\label{identity W}
    D\Phi[\bar{g},\bar{\pi}]^*\pth{\la^{j,\ell,\a} \mathbb{W}_{j,\ell,\a}} = 0, \quad \text{on $\{r\in I\}$.}
  \end{align}
  Recall from Lemma \ref{lem harmonic polynomials} that
  $P^*\pth{\mathbb{W}_{j,\ell,\a}}=0$ on $\Si$ and by assumption (see
  Theorem \ref{theo general}) the coefficients of $P^*$ are analytic
  in the region $\{r_0<r<r_1\}$. Standard interior elliptic regularity (see Theorem 7.5.1 in \cite{Hormanderzero}) thus
  implies that $\mathbb{W}_{j,\ell,\a}$ is analytic in the region
  $\{r_0<r<r_1\}$. Since the assumptions also imply that the
  coefficients of $D\Phi[\bar{g},\bar{\pi}]^*$ are analytic in the
  region $\{r_0<r<r_1\}$, we can extend \eqref{identity W} to this
  region, and by continuity to $\{r_0\leq r<r_1\}$. We have thus proved
  \begin{align}\label{identity W 2}
    D\Phi[\bar{g},\bar{\pi}]^*\pth{\la^{j,\ell,\a} \mathbb{W}_{j,\ell,\a}} = 0, \quad \text{on $\{r_0\leq r<r_1\}$.}
  \end{align}
  Now, Lemma \ref{lem regularity kids} implies that $\la^{j,\ell,\a} \mathbb{W}_{j,\ell,\a}$ and $B_\nu^*(\la^{j,\ell,\a} \mathbb{W}_{j,\ell,\a})$ are analytic on the boundary
  $\dr\Si$. Recall from Proposition \ref{prop P final} that
  $(B_\nu^*+F_\de^*)(\mathbb{W}_{j,\ell,\a})=0$ on $\dr\Si$. Restricting
  this to the open subset $\mathcal{U}$ of $\dr\Si$ where $F_\de=0$ (which implies that $F_\de^*=0$ on the same open subset) we
  obtain
  $B_\nu^*(\la^{j,\ell,\a}
  \mathbb{W}_{j,\ell,\a})_{|_{\mathcal{U}}}=0$. Since
  $B_\nu^*(\la^{j,\ell,\a} \mathbb{W}_{j,\ell,\a})$ is analytic on the
  boundary, we extend this equality to
  $B_\nu^*(\la^{j,\ell,\a} \mathbb{W}_{j,\ell,\a})=0$ on
  $\dr\Si$. Returning to the boundary conditions, this implies that
  $F_\de^*( \la^{j,\ell,\a} \mathbb{W}_{j,\ell,\a})=0 $ on $\dr\Si$. Restricting this to the open subset $\mathcal{V}$ of $\dr\Si$ where $F_{\de}=\mathrm{Id}$ (which implies that $F_\de^*=\mathrm{Id}$ on the same open subset), we obtain $\la^{j,\ell,\a} \mathbb{W}_{j,\ell,\a |_{\mathcal{V}}}=0$
  which again extends to $\la^{j,\ell,\a} \mathbb{W}_{j,\ell,\a}=0$ on
  $\dr\Si$ thanks to analyticity. Together with Lemma \ref{lem
    harmonic polynomials} this implies that
  $P^*( \la^{j,\ell,\a} \mathbb{W}_{j,\ell,\a})=0$ on $\Si$,
  $B_\nu^*( \la^{j,\ell,\a} \mathbb{W}_{j,\ell,\a})= \la^{j,\ell,\a}
  \mathbb{W}_{j,\ell,\a}=0$ on $\dr\Si$. Lemma \ref{lem unique
    continuation} thus implies that
  \begin{align}\label{W=0}
    \la^{j,\ell,\a} \mathbb{W}_{j,\ell,\a} = 0, \quad \text{on $\Si$.}
  \end{align}
  Since the family formed by the $\mathbb{W}_{j,\ell,\a}$ is linearly
  independent on $\Si$, this implies that $\la^{j,\ell,\a}=0$ for all indices
  and concludes the proof of \Cref{lem KIDS}, since the second point follows
  directly from the first one.
\end{proof}

\begin{remark}
  The proof of Lemma \ref{lem KIDS} shows how the KIDS are
  obstructions to the construction of the corrector. In order to go
  from \eqref{identity W 2} to \eqref{W=0}, we need to use a property
  distinguishing elements of $\ker(D\Phi[\bar{g},\bar{\pi}]^*)$ and
  elements of $\ker(P^*_{F_\de,q-1+\de})$. The key is to note that
  $P^*(\mathbb{U})=0$ for all
  $\mathbb{U}\in\ker(D\Phi[\bar{g},\bar{\pi}]^*)$, which follows from
  the following alternative definition of $P$
  \begin{align*}
    P = D\Phi[\bar{g},\bar{\pi}]\circ \Pi,
  \end{align*}
  with $\Pi(u,X)=\pth{ 4u\bar{g}, 2u\bar{\pi} + L_{\bar{g}}X}$. This
  implies that the only way to distinguish
  $\ker(D\Phi[\bar{g},\bar{\pi}]^*)$ from $\ker(P^*_{F_\de,q-1+\de})$ is
  the boundary condition, and in particular the regularity at the boundary.
\end{remark}

We now construct the corrector map. If $\wc{\mathbb{U}}$ is such that 
\begin{align}\label{assumption u X}
  \l\wc{\mathbb{U}}\r_{H^2_{-q-\de}(\Si)}  \leq C_0 \e,
\end{align}
we want the corrector $\p\in \mathcal{Z}_q$ to be such that the
following orthogonality conditions hold
\begin{align}\label{orthogonality condition}
  \ps{ D\Phi[\bar{g},\bar{\pi}]((\wc{g},\wc{\pi})+\p) + \RR\pth{\p,\wc{\mathbb{U}}} , \mathbb{W}_{j,\ell,\a} }_{L^2(\Sigma)} & = 0 ,
\end{align}
for all $1\leq j \leq q$, $-(j-1)\leq \ell \leq j-1$ and
$\a=0,1,2,3$. According to Proposition \ref{prop P final} this will
ensure the solvability of \eqref{eq u X}. Since each
$\p\in\mathcal{Z}_q$ is compactly supported in $\Si$ we can perform
the following integration by parts without picking up boundary terms
\begin{align*}
  \ps{D\Phi[\bar{g},\bar{\pi}](\p),\mathbb{W}_{j,\ell,\a}}_{L^2(\Sigma)} = \ps{\p,D\Phi[\bar{g},\bar{\pi}]^*(\mathbb{W}_{j,\ell,\a})}_{L^2(\Sigma)} ,
\end{align*}
so that \eqref{orthogonality condition} rewrites
\begin{align}
  \ps{\p,D\Phi[\bar{g},\bar{\pi}]^*(\mathbb{W}_{j,\ell,\a})}_{L^2(\Sigma)} & =-\ps{ D\Phi[\bar{g},\bar{\pi}](\wc{g},\wc{\pi}),\mathbb{W}_{j,\ell,\a}}_{L^2(\Sigma)} -\ps{\RR\pth{\p,\wc{\mathbb{U}}} , \mathbb{W}_{j,\ell,\a} }_{L^2(\Sigma)} .
\end{align}

\begin{definition}
  Let $D_0>0$. The \emph{corrector map} $\Psi^{(c)}$ is defined by 
  \begin{align*}
    \Psi^{(c)} : B_{\mathcal{Z}_q}(0,D_0\e) & \longrightarrow B_{\mathcal{Z}_q}(0,D_0\e),
    \\ \p & \longmapsto \Psi^{(c)}(\p),
  \end{align*}
  such that $\Psi^{(c)}(\p)$ solves
  \begin{align}\label{eq p}
    \ps{\Psi^{(c)}(\p),D\Phi[\bar{g},\bar{\pi}]^*(\mathbb{W}_{j,\ell,\a})}_{L^2(\Sigma)} & =-\ps{ D\Phi[\bar{g},\bar{\pi}](\wc{g},\wc{\pi}),\mathbb{W}_{j,\ell,\a}}_{L^2(\Sigma)} -\ps{\RR\pth{\p,\wc{\mathbb{U}}} , \mathbb{W}_{j,\ell,\a} }_{L^2(\Sigma)}
  \end{align}
  for all $1\leq j \leq q$, $-(j-1)\leq \ell \leq j-1$ and $\a=0,1,2,3$.
\end{definition}

\begin{lemma}\label{lem corrector map}
  Assume that $\wc{\mathbb{U}}$ satisfies \eqref{assumption u X}. If $D_0$ is
  large enough and if $\e$ is small enough compared to $D_0^{-1}$ and
  $C_0^{-1}$, then the corrector map $\Psi^{(c)}$ is a well-defined
  contraction.
\end{lemma}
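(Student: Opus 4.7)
The plan is to verify directly that $\Psi^{(c)}$ is well-defined, lands in the ball, and is a strict contraction, by combining the KIDS resolvability result of Lemma \ref{lem KIDS} with the remainder estimates of Lemmas \ref{lem reste 1}--\ref{lem reste 2}. For each $\p\in B_{\mathcal{Z}_q}(0,D_0\e)$ the right-hand side of \eqref{eq p} prescribes $4q^2$ real numbers $\mu_{j,\ell,\a}(\p)$, and Lemma \ref{lem KIDS}(ii) produces a unique element $\Psi^{(c)}(\p)\in\mathcal{Z}_q$ realizing them, together with the quantitative bound
\[
\l \Psi^{(c)}(\p)\r_{W^{2,\infty}(\Si)\times W^{1,\infty}(\Si)} \lesssim \sup_{j,\ell,\a}\left|\mu_{j,\ell,\a}(\p)\right|.
\]
Everything then reduces to controlling these $4q^2$ scalars.

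The linear part $-\ps{D\Phi[\bar g,\bar\pi](\wc g,\wc\pi),\mathbb{W}_{j,\ell,\a}}_{L^2(\Si)}$ is independent of $\p$ and is bounded by $C\e$ using the schematic expressions of Lemma \ref{lem linearized constraint}, the asymptotic flatness bound $|\bar\Ga|\lesssim (1+r)^{-2}$, the growth $|\mathbb{W}_{j,\ell,\a}|\lesssim r^{j-1}$ with $j\le q$, and the hypothesis \eqref{assumption check}; the weighted pairings converge because the duality weight $q+\de-1$ strictly dominates $j-1$. The nonlinear part is bounded by $C(C_0,D_0)\e^2$ thanks to estimate \eqref{estim reste 2} of Lemma \ref{lem reste 1}. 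Hence
\[
\l \Psi^{(c)}(\p)\r_{W^{2,\infty}(\Si)\times W^{1,\infty}(\Si)} \le C\e + C(C_0,D_0)\e^2,
\]
and choosing $D_0$ large compared to the universal constant $C$ and then $\e$ small compared to $D_0^{-1}$ and $C_0^{-1}$ yields $\l \Psi^{(c)}(\p)\r \le D_0\e$, so $\Psi^{(c)}$ stabilizes the ball.

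For the contraction, I would observe that for $\p,\p'\in B_{\mathcal{Z}_q}(0,D_0\e)$ the difference $\Psi^{(c)}(\p)-\Psi^{(c)}(\p')$ solves \eqref{eq p} with right-hand side $-\ps{\RR(\p,\wc{\mathbb{U}})-\RR(\p',\wc{\mathbb{U}}),\mathbb{W}_{j,\ell,\a}}_{L^2(\Si)}$, the linear piece being independent of $\p$. Applying Lemma \ref{lem KIDS}(ii) together with the Lipschitz bound \eqref{estim reste 4} of Lemma \ref{lem reste 2} (used at fixed $\wc{\mathbb{U}}=\wc{\mathbb{U}}'$, so only the $\breve g,\breve\pi$ differences survive) gives a contraction factor of order $C(C_0,D_0)\e$, which is strictly less than one for $\e$ small enough. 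No genuine obstacle remains at this stage: all of the substantive work has been packaged into the preceding lemmas (the KIDS-elimination via analyticity and unique continuation in Lemma \ref{lem KIDS}, and the quadratic/Lipschitz structure of $\RR$ in Lemmas \ref{lem reste 1}--\ref{lem reste 2}); the only subtlety is the ordering of quantifiers, namely fixing $C_0$ first, then choosing $D_0$ large and finally $\e$ small relative to both $C_0^{-1}$ and $D_0^{-1}$.
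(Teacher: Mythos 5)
Your proposal is correct and follows essentially the same route as the paper's proof: you invoke Lemma~\ref{lem KIDS} for well-definedness and the quantitative bound in $\mathcal{Z}_q$, estimate the linear part directly via the schematic form of $D\Phi[\bar g,\bar\pi]$ and the weighted pairing condition $j\le q<q+\de$, estimate the nonlinear part via~\eqref{estim reste 2}, and establish the contraction property by differencing at fixed $\wc{\mathbb{U}}$ and applying~\eqref{estim reste 4}. The ordering of quantifiers ($C_0$ fixed, then $D_0$ large, then $\e$ small relative to both) also matches the paper.
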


\begin{proof}
  Let $\p\in B_{\mathcal{Z}_q}(0,D_0\e)$. Thanks to Lemma \ref{lem
    KIDS} there exists a unique $\Psi^{(c)}(\p)\in\mathcal{Z}_q$
  satisfying \eqref{eq p} for all $1\leq j \leq q$,
  $-(j-1)\leq \ell \leq j-1$ and $\a=0,1,2,3$ satisfying moreover
  \begin{align*}
    \l \Psi^{(c)}(\p) \r_{W^{2,\infty}(\Si)\times W^{1,\infty}(\Si) } 
    \lesssim \sup_{j,\ell,\a} \left|
    \ps{ D\Phi[\bar{g},\bar{\pi}](\wc{g},\wc{\pi}),\mathbb{W}_{j,\ell,\a}}_{L^2(\Sigma)} 
    +\ps{\RR\pth{\p,\wc{\mathbb{U}}} , \mathbb{W}_{j,\ell,\a} }_{L^2(\Sigma)} \right|.
  \end{align*}
  We estimate both terms on the right-hand side. First since $j\leq q$ and $\de>0$ we have
  \begin{align*}
    \left|\ps{ D\Phi[\bar{g},\bar{\pi}](\wc{g},\wc{\pi}),\mathbb{W}_{j,\ell,\a}}\right| & \lesssim \l D\Phi[\bar{g},\bar{\pi}](\wc{g},\wc{\pi}) \r_{L^2_{-q-\de-2}(\Si)}
    \\&\lesssim \l \wc{g} \r_{H^2_{-q-\de}(\Si)} + \l \wc{\pi} \r_{H^1_{-q-\de-1}(\Si)}
    \\&\lesssim \e
  \end{align*}
  where we have used \eqref{DH}, \eqref{DM}, asymptotic flatness and
  \eqref{assumption check}. For the second term in the estimate for
  $\Psi^{(c)}(\p)$ we use \eqref{estim reste 2}. We obtain
  \begin{align*}
    \l \Psi^{(c)}(\p) \r_{W^{2,\infty}(\Si)\times W^{1,\infty}(\Si) } \lesssim \e\pth{ 1 + C(C_0,D_0)\e} .
  \end{align*}
  Taking $D_0$ large enough and $\e$ small enough compared to
  $D_0^{-1}$ and $C_0^{-1}$ shows that
  $\Psi^{(c)}(\p)\in B_{\mathcal{Z}_q}(0,D_0\e)$ and thus that the
  corrector map $\Psi^{(c)}$ is well-defined. 
  
  Next let
  $\p,\p'\in B_{\mathcal{Z}_q}(0,D_0\e)$. Taking the difference of the
  relations defining $\Psi^{(c)}(\p)$ and $\Psi^{(c)}(\p')$ and using
  Lemma \ref{lem KIDS} we obtain
  \begin{align*}
    \l \Psi^{(c)}(\p) - \Psi^{(c)}(\p')  \r_{W^{2,\infty}(\Si)\times W^{1,\infty}(\Si) }
    & \lesssim \sup_{j,\ell,\a}\left| \ps{\RR\pth{\p,\wc{\mathbb{U}}} - \RR\pth{\p',\wc{\mathbb{U}}} , \mathbb{W}_{j,\ell,\a} }_{L^2(\Sigma)} \right|
    \\
    &\lesssim C(C_0,D_0)\e \l \p-\p'\r_{W^{2,\infty}(\Si)\times W^{1,\infty}(\Si)} ,
  \end{align*}
  where we have used \eqref{estim reste 4}. Taking $\e$ small enough compared to $D_0^{-1}$ and $C_0^{-1}$ shows that $\Psi^{(c)}$ is a contraction.
\end{proof}

From now on, $D_0>0$ is fixed according to Lemma \ref{lem corrector map}.

\begin{corollary}\label{coro corrector}
  Assume that $\wc{\mathbb{U}}$ satisfies \eqref{assumption u X}. If $\e$ is
  small enough compared to $C_0^{-1}$, there exists a unique
  $\p\pth{\wc{\mathbb{U}}}\in \mathcal{Z}_q$ such that
  \eqref{orthogonality condition} holds. Moreover we have
  \begin{align}\label{estim p(U)}
    \l \p\pth{\wc{\mathbb{U}}} \r_{W^{2,\infty}(\Si)\times W^{1,\infty}(\Si)} \lesssim \e 
  \end{align}
  and
  \begin{align}\label{estim p(U)-p(U')}
    \l\p\pth{\wc{\mathbb{U}}} - \p\pth{\wc{\mathbb{U}}'}  \r_{W^{2,\infty}(\Si)\times W^{1,\infty}(\Si)  } \lesssim C(C_0)\e \l \wc{\mathbb{U}}-\wc{\mathbb{U}}'\r_{H^2_{-q-\de}(\Si)},
  \end{align}
  if $\wc{\mathbb{U}}$ and $\wc{\mathbb{U}}'$ both satisfy \eqref{assumption u X}.
\end{corollary}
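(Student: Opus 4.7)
The plan is to apply the Banach fixed point theorem to the corrector map $\Psi^{(c)}$ constructed in Lemma \ref{lem corrector map}, and then derive the Lipschitz dependence \eqref{estim p(U)-p(U')} directly from the defining equation \eqref{eq p} using the remainder estimate \eqref{estim reste 4}.

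First, by Lemma \ref{lem corrector map}, for $D_0$ large enough and $\e$ small enough compared to $D_0^{-1}$ and $C_0^{-1}$, the map $\Psi^{(c)}: \overline{B_{\mathcal{Z}_q}(0, D_0\e)} \to \overline{B_{\mathcal{Z}_q}(0, D_0\e)}$ is a well-defined contraction. Since $\mathcal{Z}_q$ is finite dimensional, the closed ball $\overline{B_{\mathcal{Z}_q}(0, D_0\e)}$ is complete, so the Banach fixed point theorem yields a unique fixed point $\p\pth{\wc{\mathbb{U}}}\in \overline{B_{\mathcal{Z}_q}(0, D_0\e)}$. By the very definition of $\Psi^{(c)}$, this fixed point satisfies the orthogonality conditions \eqref{orthogonality condition}, and the bound \eqref{estim p(U)} is automatic since $\p\pth{\wc{\mathbb{U}}}\in \overline{B_{\mathcal{Z}_q}(0, D_0\e)}$.

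For the Lipschitz estimate \eqref{estim p(U)-p(U')}, let $\wc{\mathbb{U}}$ and $\wc{\mathbb{U}}'$ both satisfy \eqref{assumption u X} and set $\p\vcentcolon=\p\pth{\wc{\mathbb{U}}}$ and $\p'\vcentcolon=\p\pth{\wc{\mathbb{U}}'}$. Subtracting the relations \eqref{orthogonality condition} satisfied by $\p$ and $\p'$, and integrating by parts using that both correctors are compactly supported in $\Si$, we obtain
\begin{align*}
  \ps{\p-\p', D\Phi[\bar{g},\bar{\pi}]^*\pth{\mathbb{W}_{j,\ell,\a}}}_{L^2(\Si)} = -\ps{\RR\pth{\p,\wc{\mathbb{U}}}-\RR\pth{\p',\wc{\mathbb{U}}'}, \mathbb{W}_{j,\ell,\a}}_{L^2(\Si)}
\end{align*}
for all admissible indices $(j,\ell,\a)$. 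Since $\p-\p'\in \mathcal{Z}_q$, the second part of Lemma \ref{lem KIDS} gives
\begin{align*}
  \l \p-\p' \r_{W^{2,\infty}(\Si)\times W^{1,\infty}(\Si)} \lesssim \sup_{j,\ell,\a}\left| \ps{\RR\pth{\p,\wc{\mathbb{U}}}-\RR\pth{\p',\wc{\mathbb{U}}'}, \mathbb{W}_{j,\ell,\a}}_{L^2(\Si)} \right|,
\end{align*}
and plugging the estimate \eqref{estim reste 4} from Lemma \ref{lem reste 2} into the right-hand side yields
\begin{align*}
  \l \p-\p' \r_{W^{2,\infty}(\Si)\times W^{1,\infty}(\Si)} \lesssim C(C_0,D_0)\e\pth{ \l \p-\p' \r_{W^{2,\infty}(\Si)\times W^{1,\infty}(\Si)} + \l \wc{\mathbb{U}}-\wc{\mathbb{U}}'\r_{H^2_{-q-\de}(\Si)} }.
\end{align*}
For $\e$ sufficiently small compared to $C_0^{-1}$ (and $D_0$ already fixed in terms of $C_0$ via Lemma \ref{lem corrector map}), the first term on the right-hand side is absorbed into the left-hand side, yielding \eqref{estim p(U)-p(U')}.

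This concluding step is essentially a soft argument: the real work was done in constructing $\mathcal{Z}_q$ so that Lemma \ref{lem KIDS} provides the key invertibility, and in establishing the remainder estimates of Lemma \ref{lem reste 2}; no genuine obstacle arises here, the sole subtlety being that the uniqueness asserted by the corollary is uniqueness within the ball $\overline{B_{\mathcal{Z}_q}(0, D_0\e)}$ in which the fixed point argument is carried out.
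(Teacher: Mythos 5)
Your proof is correct and follows essentially the same route as the paper: the first part via the Banach fixed point theorem applied to $\Psi^{(c)}$ from Lemma \ref{lem corrector map}, and the Lipschitz estimate \eqref{estim p(U)-p(U')} by subtracting the defining relations, invoking Lemma \ref{lem KIDS} and \eqref{estim reste 4}, and absorbing the $\l\p-\p'\r$ term for $\e$ small. The extra details you supply (completeness of the closed ball in the finite-dimensional $\mathcal{Z}_q$, the boundary-term-free integration by parts, and the remark that uniqueness is within the ball) are all accurate and consistent with the paper's argument.
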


\begin{proof}
  The proof of the first part of the corollary follows from Lemma
  \ref{lem corrector map} and the Banach fixed point theorem. For the
  proof of \eqref{estim p(U)-p(U')} we take the difference of the
  relations defining $\p\vcentcolon=\p\pth{\wc{\mathbb{U}}}$ and
  $\p'\vcentcolon=\p\pth{\wc{\mathbb{U}}'}$ together with Lemma
  \ref{lem KIDS} and \eqref{estim reste 4} to obtain
  \begin{align*}
    \l\p - \p'  \r_{W^{2,\infty}(\Si)\times W^{1,\infty}(\Si) }
    & \lesssim \sup_{j,\ell,\a}\left| \ps{\RR\pth{\p,\wc{\mathbb{U}}} - \RR\pth{\p',\wc{\mathbb{U}}'} , \mathbb{W}_{j,\ell,\a} }_{L^2(\Sigma)} \right|
    \\
    &\lesssim C(C_0)\e \bigg( \l\p - \p'  \r_{W^{2,\infty}(\Si)\times W^{1,\infty}(\Si) } +\l \wc{\mathbb{U}} - \wc{\mathbb{U}}' \r_{H^2_{-q-\de}(\Si)} \bigg).
  \end{align*}
  By taking $\e$ small enough compared to $C_0^{-1}$ we can absorb
  $C(C_0)\e \l\p - \p' \r_{W^{2,\infty}(\Si)\times W^{1,\infty}(\Si)}
  $ by the LHS and obtain \eqref{estim p(U)-p(U')}.
\end{proof}

\subsection{Solving the elliptic system}
\label{section solving system}

In this section we conclude the proof of Theorem \ref{theo general} by
solving the elliptic system for $\wc{\mathbb{U}}$ with the corrector
$\p\pth{\wc{\mathbb{U}}}$ constructed in Section \ref{section
  corrector}. We want to solve
\begin{equation}\label{system u X}
  \left\{
    \begin{aligned}
      P\pth{\wc{\mathbb{U}}} & = D\Phi[\bar{g},\bar{\pi}]\pth{(\wc{g},\wc{\pi})+\p\pth{\wc{\mathbb{U}}}} + \RR\pth{\p\pth{\wc{\mathbb{U}}},\wc{\mathbb{U}}},  \quad \text{on $\Si$,}
      \\ (B_\nu + F_\de)\pth{\wc{\mathbb{U}}} & = 0 ,  \quad \text{on $\dr\Si$.}
    \end{aligned}
  \right.
\end{equation}

\begin{definition}
  Let $C_0>0$. The \emph{solution map} $\Psi^{(s)}$ is defined by
  \begin{align*}
    \Psi^{(s)} : B_{H^2_{-q-\de}(\Si)}(0,C_0\e) & \longrightarrow B_{H^2_{-q-\de}(\Si)}(0,C_0\e)
    \\ \wc{\mathbb{U}} & \longmapsto \Psi^{(s)} \pth{\wc{\mathbb{U}}}
  \end{align*}
  such that $\Psi^{(s)}\pth{\wc{\mathbb{U}}}$ solves
  \begin{equation}\label{system psi(u,X)}
    \left\{
      \begin{aligned}
        P\pth{\Psi^{(s)} \pth{\wc{\mathbb{U}}}} & = D\Phi[\bar{g},\bar{\pi}]\pth{(\wc{g},\wc{\pi})+\p\pth{\wc{\mathbb{U}}}} + \RR\pth{\p\pth{\wc{\mathbb{U}}},\wc{\mathbb{U}}},  \quad \text{on $\Si$,}
        \\ (B_\nu + F_\de)\pth{\Psi^{(s)} \pth{\wc{\mathbb{U}}}} & = 0 ,  \quad \text{on $\dr\Si$.}
      \end{aligned}
    \right.
  \end{equation}
\end{definition}

\begin{lemma}\label{dernier lemme}
  If $C_0$ is large enough and $\e$ is small enough compared to
  $C_0^{-1}$, the solution map $\Psi^{(s)}$ is a well-defined
  contraction.
\end{lemma}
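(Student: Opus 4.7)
The plan is to apply Proposition \ref{prop P final}(iii) to produce $\Psi^{(s)}(\wc{\mathbb{U}})$ for each $\wc{\mathbb{U}}\in B_{H^2_{-q-\de}(\Si)}(0,C_0\e)$. Writing
\begin{align*}
    \mathbb{Y}\pth{\wc{\mathbb{U}}} \vcentcolon= D\Phi[\bar{g},\bar{\pi}]\pth{(\wc{g},\wc{\pi})+\p\pth{\wc{\mathbb{U}}}} + \RR\pth{\p\pth{\wc{\mathbb{U}}},\wc{\mathbb{U}}},
\end{align*}
the orthogonality conditions \eqref{condition Y} required by Proposition \ref{prop P final}(iii) are exactly \eqref{orthogonality condition}, which holds by construction of $\p\pth{\wc{\mathbb{U}}}$ via Corollary \ref{coro corrector}. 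The membership $\mathbb{Y}\pth{\wc{\mathbb{U}}}\in L^2_{-q-\de-2}(\Si)$ follows from \eqref{assumption check}, the compact support and $W^{2,\infty}\times W^{1,\infty}$ bound \eqref{estim p(U)} on $\p\pth{\wc{\mathbb{U}}}$, and Lemma \ref{lem reste 1}. Hence Proposition \ref{prop P final}(iii) yields a unique $\Psi^{(s)}\pth{\wc{\mathbb{U}}}\in H^2_{-q-\de}(\Si)$ solving \eqref{system psi(u,X)} with $\l\Psi^{(s)}\pth{\wc{\mathbb{U}}}\r_{H^2_{-q-\de}(\Si)}\lesssim \l\mathbb{Y}\pth{\wc{\mathbb{U}}}\r_{L^2_{-q-\de-2}(\Si)}$.

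To show that $\Psi^{(s)}$ maps the ball of radius $C_0\e$ into itself, I would estimate $\l\mathbb{Y}\pth{\wc{\mathbb{U}}}\r_{L^2_{-q-\de-2}(\Si)}$ by three separate contributions. First, \eqref{DH}, \eqref{DM}, asymptotic flatness and \eqref{assumption check} give $\l D\Phi[\bar{g},\bar{\pi}](\wc{g},\wc{\pi})\r_{L^2_{-q-\de-2}(\Si)}\lesssim \e$. Second, since $\p\pth{\wc{\mathbb{U}}}$ is compactly supported in a fixed annulus, \eqref{estim p(U)} gives $\l D\Phi[\bar{g},\bar{\pi}](\p\pth{\wc{\mathbb{U}}})\r_{L^2_{-q-\de-2}(\Si)}\lesssim \e$. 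Third, Lemma \ref{lem reste 1} provides $\l\RR\pth{\p\pth{\wc{\mathbb{U}}},\wc{\mathbb{U}}}\r_{L^2_{-q-\de-2}(\Si)}\lesssim C(C_0,D_0)\e^2$. Combining these yields an absolute constant $C>0$ such that $\l\Psi^{(s)}\pth{\wc{\mathbb{U}}}\r_{H^2_{-q-\de}(\Si)}\leq C\e\pth{1+C(C_0,D_0)\e}$, so choosing $C_0\geq 2C$ and then $\e$ small enough compared to $C_0^{-1}$ and $D_0^{-1}$ puts $\Psi^{(s)}\pth{\wc{\mathbb{U}}}$ in $B_{H^2_{-q-\de}(\Si)}(0,C_0\e)$.

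For the contraction property, given $\wc{\mathbb{U}},\wc{\mathbb{U}}'$ in the ball, the difference $\Psi^{(s)}\pth{\wc{\mathbb{U}}}-\Psi^{(s)}\pth{\wc{\mathbb{U}}'}$ solves the same boundary value problem with right-hand side $\mathbb{Y}\pth{\wc{\mathbb{U}}}-\mathbb{Y}\pth{\wc{\mathbb{U}}'}$, and the corresponding orthogonality conditions hold by subtracting the two instances of \eqref{orthogonality condition}. Applying Proposition \ref{prop P final}(iii) once more, I would bound this difference in $L^2_{-q-\de-2}(\Si)$ using linearity of $D\Phi[\bar{g},\bar{\pi}]$, the Lipschitz estimate \eqref{estim p(U)-p(U')} for the corrector (which also controls $D\Phi[\bar{g},\bar{\pi}](\p\pth{\wc{\mathbb{U}}}-\p\pth{\wc{\mathbb{U}}'})$ via compact support), and \eqref{estim reste 3} for the remainder difference, giving
\begin{align*}
    \l\mathbb{Y}\pth{\wc{\mathbb{U}}}-\mathbb{Y}\pth{\wc{\mathbb{U}}'}\r_{L^2_{-q-\de-2}(\Si)} \lesssim C(C_0,D_0)\e \l\wc{\mathbb{U}}-\wc{\mathbb{U}}'\r_{H^2_{-q-\de}(\Si)}.
\end{align*}
Shrinking $\e$ further makes the prefactor strictly less than $1/2$, yielding the contraction.

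The main obstacle has already been dealt with upstream: the identification of the cokernel via adapted harmonic polynomials (Lemma \ref{lem image}), the construction of the boundary condition $F_\de$ eliminating the kernel (Corollary \ref{coro BC}), and the use of analyticity to eliminate KIDS obstructions (Lemma \ref{lem KIDS}) together ensure that the corrector from Corollary \ref{coro corrector} provides the required orthogonality. At this final stage, the only remaining care is the order of quantifiers: $C_0$ must be chosen first to absorb the universal constant coming from the elliptic solvability, then $D_0$ has already been fixed by Lemma \ref{lem corrector map} in terms of $C_0$, and finally $\e$ is chosen small compared to both $C_0^{-1}$ and $D_0^{-1}$. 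Once $\Psi^{(s)}$ is shown to be a contraction, the Banach fixed point theorem yields a unique $\wc{\mathbb{U}}\in B_{H^2_{-q-\de}(\Si)}(0,C_0\e)$ solving \eqref{system u X}, which together with $\p\pth{\wc{\mathbb{U}}}$ provides the solution claimed in Theorem \ref{theo general}.
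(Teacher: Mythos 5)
Your proposal is correct and follows essentially the same route as the paper's proof: apply Proposition \ref{prop P final}(iii) using the orthogonality guaranteed by Corollary \ref{coro corrector}, estimate the right-hand side via Lemma \ref{lem reste 1} and \eqref{estim p(U)} to get well-definedness, and use \eqref{estim reste 3} together with \eqref{estim p(U)-p(U')} for the contraction. Your remark on the order of quantifiers (first $C_0$, then $D_0$ fixed by Lemma \ref{lem corrector map}, then $\e$) matches the paper's logic.
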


\begin{proof}
  Let $\wc{\mathbb{U}}\in B_{H^2_{-q-\de}(\Si)}(0,C_0\e)$. First note
  that if $\e$ is small enough compared to $C_0^{-1}$ then Corollary
  \ref{coro corrector} applies and the corrector
  $\p\pth{\wc{\mathbb{U}}}$ exists and satisfy the bounds of Corollary
  \ref{coro corrector}. By construction, this corrector is such that
  \begin{align*}
    \ps{ D\Phi[\bar{g},\bar{\pi}]\pth{(\wc{g},\wc{\pi})+\p\pth{\wc{\mathbb{U}}}} + \RR\pth{\p\pth{\wc{\mathbb{U}}},\wc{\mathbb{U}}} , \mathbb{W}_{j,\ell,\a} }_{L^2(\Sigma)} & = 0 ,
  \end{align*}
  for all $1\leq j \leq q$, $-(j-1)\leq \ell \leq j-1$ and $\a=0,1,2,3$.
  Moreover, using \eqref{DH}, \eqref{DM}, asymptotic flatness, \eqref{assumption check}, \eqref{estim reste 1} and \eqref{estim p(U)} we obtain
  \begin{align*}
    \l D\Phi[\bar{g},\bar{\pi}]\pth{(\wc{g},\wc{\pi})+\p\pth{\wc{\mathbb{U}}}} + \RR\pth{\p\pth{\wc{\mathbb{U}}},\wc{\mathbb{U}}} \r_{L^2_{-q-\de-2}(\Si)} & \lesssim \e (1+C(C_0)\e).
  \end{align*}
  Therefore, the third point in Proposition \ref{prop P final} implies
  that there exists a unique $\Psi^{(s)} \pth{\wc{\mathbb{U}}}$
  solving \eqref{system psi(u,X)} with the bound
  \begin{align*}
    \l\Psi^{(s)} \pth{\wc{\mathbb{U}}} \r_{H^2_{-q-\de}(\Si)} \lesssim \e(1+C(C_0)\e).
  \end{align*}
  Choosing first $C_0$ large enough and then $\e$ small enough compared to
  $C_0^{-1}$ implies that
  $\Psi^{(s)}\pth{\wc{\mathbb{U}}}\in B_{H^2_{-q-\de}(\Si)}(0,C_0\e)$
  and thus that the solution map $\Psi^{(s)}$ is well-defined. 
  
  Next 
  let
  $\wc{\mathbb{U}},\wc{\mathbb{U}}'\in
  B_{H^2_{-q-\de}(\Si)}(0,C_0\e)$. We subtract the two systems
  defining $\Psi^{(s)}\pth{\wc{\mathbb{U}}}$ and
  $\Psi^{(s)}\pth{\wc{\mathbb{U}}'}$ together with Proposition
  \ref{prop P final} and \eqref{estim reste 3} to obtain
  \begin{align*}
    \l \Psi^{(s)}\pth{\wc{\mathbb{U}}} - \Psi^{(s)}\pth{\wc{\mathbb{U}}'} \r_{H^2_{-q-\de}(\Si)} & \lesssim \l D\Phi[\bar{g},\bar{\pi}]\pth{\p\pth{\wc{\mathbb{U}}}-\p\pth{\wc{\mathbb{U}}'} } \r_{L^2_{-q-\de-2}(\Si)} 
    \\&\quad + \l \RR\pth{\p\pth{\wc{\mathbb{U}}},\wc{\mathbb{U}}} - \RR\pth{\p\pth{\wc{\mathbb{U}}'},\wc{\mathbb{U}}'} \r_{L^2_{-q-\de-2}(\Si)}
    \\&\lesssim (1+C(C_0)\e) \l \p\pth{\wc{\mathbb{U}}}-\p\pth{\wc{\mathbb{U}}'} \r_{W^{2,\infty}(\Si)\times W^{1,\infty}(\Si)}
    \\&\quad + C(C_0)\e \l \wc{\mathbb{U}} - \wc{\mathbb{U}}' \r_{H^2_{-q-\de}(\Si)} .
  \end{align*}
  We now use \eqref{estim p(U)-p(U')} to obtain
  \begin{align*}
    \l \Psi^{(s)}\pth{\wc{\mathbb{U}}} - \Psi^{(s)}\pth{\wc{\mathbb{U}}'} \r_{H^2_{-q-\de}(\Si)} & \lesssim  C(C_0)\e \l \wc{\mathbb{U}} - \wc{\mathbb{U}}' \r_{H^2_{-q-\de}(\Si)}.
  \end{align*}
  Taking $\e$ small enough compared to $C_0^{-1}$ shows that $\Psi^{(s)}$ is a contraction.
\end{proof}

In view of Lemma \ref{dernier lemme}, the Banach fixed point theorem implies that if $\e$ is small
enough then there exists a unique solution $\wc{\mathbb{U}}$ to the system
\eqref{eq u X} with the bound
\begin{align*}
  \l \wc{\mathbb{U}} \r_{H^2_{-q-\de}(\Si)} \lesssim \e.
\end{align*}
This concludes the proof of Theorem \ref{theo general}.

\section{Statements and Declarations}

The authors have no competing interests to declare that are relevant to the content of this article. Data sharing not applicable to this article as no datasets were generated or analysed during the current study.

\end{document}